\numberwithin{equation}{section}
\newtheorem{theorem}{Theorem}[section]
\newtheorem{corollary}[theorem]{Corollary}
\newtheorem{lemma}[theorem]{Lemma}
\newtheorem{proposition}[theorem]{Proposition}
\theoremstyle{definition}
\newtheorem{definition}{Definition}[section]
\theoremstyle{remark}
\newtheorem{remark}[theorem]{Remark}
\newcommand{\R}{\Bbb{R}}
\newcommand{\N}{\Bbb{N}}
\newcommand{\Z}{\mathbb Z}
\newcommand{\F}{\mathscr{F}}
\newcommand{\Sz}{\mathscr{S}}
\newcommand{\be}[1][s]{B^{#1}}
\newcommand{\hbe}[2][s-1]{B^{#1,#2}}
\newcommand{\LL}[1][\infty]{\tilde{L}^#1}
\newcommand{\set}[1]{\{#1\}}
\newcommand{\abs}[1]{|#1|}
\newcommand{\norm}[1]{\|#1\|}
\newcommand{\lnorm}[1]{\left\Vert#1\right\Vert}
\newcommand{\ls}{\leqslant}
\newcommand{\gs}{\geqslant}
\newcommand{\mb}{\bar{m}}
\newcommand{\nb}{\bar{n}}
\newcommand{\uu}{\mathbf{u}}
\newcommand{\vv}{\mathbf{v}}
\newcommand{\ww}{\mathbf{w}}
\newcommand{\GG}{\mathbf{G}}
\newcommand{\HH}{\mathbf{H}}
\newcommand{\hn}{{d/2}}
\newcommand{\J}{\mathbb{F}_\ell}
\newcommand{\eps}{\varepsilon}
\newcommand{\dive}{\mathrm{div}}
\newcommand{\dk}[1][]{\triangle_k #1}
\newcommand{\sameauthor}{------}
\title[Well-posedness of a viscous liquid-gas flow model]{Well-posedness for a multi-dimensional viscous liquid-gas two-phase flow model}
\author[C. C. Hao]{Chengchun Hao}
\address{Institute of Mathematics,
   Academy of Mathematics \& Systems Science\\
   and Hua Loo-Keng Key Laboratory of Mathematics, Chinese Academy of Sciences\\
 Beijing 100190, China}
\email{hcc@amss.ac.cn}
\author[H.-L. Li]{Hai-Liang Li}
\address{Department of Mathematics,
Capital Normal University\\
 Beijing 100048, China}
\email{hailiang.li.math@gmail.com}
\begin{document}

\begin{abstract}
The Cauchy problem of a multi-dimensional ($d\gs 2$) compressible
viscous liquid-gas two-phase flow model is concerned in this paper.
We investigate the global existence and uniqueness of the strong
solution for the initial data close to a stable equilibrium and the
local in time existence and uniqueness of the solution with general
initial data in the framework of Besov spaces. A continuation
criterion is also obtained for the local solution.
\end{abstract}

\keywords{
Compressible liquid-gas two-phase flow model, global well-posedness, local well-posedness,  Besov spaces
}

\subjclass[2010]{
76T10, 76N10, 35E15
}

\maketitle

\section{Introduction}

The models of two-phase or multi-phase flows have a very broad
applications of hydrodynamics in industry, engineering, biomedicine
and so on, where the fluids under investigation contain more than
one component.
Indeed, it has been estimated that over half of anything which is
produced in a modern industrial society depends, to some extent, on
a multi-phase flow process for their optimum design and safe
operations. In nature, there is a variety of different multi-phase
flow phenomena, such as sediment transport, geysers, volcanic
eruptions, clouds and rain. In addition, the models of multi-phase
flows also naturally appear in many contexts within biology, ranging
from tumor biology and anticancer therapies, development biology and
plant physiology, etc. The principles of single-phase flow fluid
dynamics and heat transfer are relatively well understood, however,
the thermo-fluid dynamics of two-phase flows is an order of
magnitude more complicated subject than that of the single-phase
flow due to the existence of moving and deformable interface and its
interactions with two phases~\cite{Brennen,IshiiHibiki,Kolev1}.

We consider the drift-flux model of two-phase flows in the present
paper, which is principally developed by Zuber and
Findlay (1965), Wallis (1969) and Ishii (1977). The basic idea about
drift-flux models is that both phases are well mixed, but the
relative motion between the phases is governed by a particular
subset of the flow parameters. In the case of liquid-gas fluids, it
relates the liquid-gas velocity difference to the drift-flux (or
``drift velocity'') of the vapor relative to the liquid due to
buoyancy effects. In general, the drift-flux models consist of two
mass conservation equations corresponding to each of the two phases,
and one equation for the conservation of the mixture momentum, and
are particularly useful in the analysis of sedimentation,
fluidization (batch, cocurrent and countercurrent), and so on
(\cite{Ishii,Wallis,Zuber,ZuberFindlay}).

The Cauchy problem to a simplified version of the viscous compressible liquid-gas two-phase
flow model of drift-flux type in $\R^d$ ($d\gs 2$), where the gas phase has not been taken into account in the momentum equation except the pressure term and the equal velocity of the liquid and gas flows has been assumed, reads
\begin{align}\label{eq.system}
  \left\{\begin{aligned}
    &\tilde{m}_t+\dive(\tilde{m}\uu)=0,\\
    &\tilde{n}_t+\dive(\tilde{n}\uu)=0,\\
    &(\tilde{m}\uu)_t+\dive(\tilde{m}\uu\otimes \uu)
     +\nabla P(\tilde{m},\tilde{n})
     =\tilde{\mu}\Delta \uu+(\tilde{\mu}+\tilde{\lambda})\nabla \dive \uu,
  \end{aligned}
  \right.
\end{align}
with the initial data
\begin{align}\label{eq.data}
  (\tilde{m},\tilde{n},\uu)|_{t=0}=(\tilde{m}_0,\tilde{n}_0,\uu_0)(x), \quad \text{in }
  \R^d,
\end{align}
%
where $\tilde{m}=\alpha_l\rho_l$ and $\tilde{n}=\alpha_g\rho_g$
denote the liquid mass and the gas mass, respectively. The unknowns
$\alpha_l$, $\alpha_g\in [0,1]$ denote the liquid and gas volume
fractions, satisfying the fundamental relation:
$\alpha_l+\alpha_g=1$. The unknown variables $\rho_l$ and $\rho_g$
denote the liquid and gas densities, satisfying the equations of
states $\rho_l=\rho_{l,0}+(P-P_{l,0})/a_l^2$, $\rho_g=P/a_g^2$,
where $a_l$ and $a_g$ denote the sonic speeds of the liquid and the
gas, respectively, and $P_{l,0}$ and $\rho_{l,0}$ are the reference
pressure and density given as constants.
$\uu$ denotes the mixed velocity of the liquid and the gas, and $P$
is the common pressure for both phases, which satisfies
\begin{align}\label{eq.pressure}
  P(\tilde{m},\tilde{n})=C_0\left(-b(\tilde{m},\tilde{n})+\sqrt{b^2(\tilde{m},\tilde{n}) +c(\tilde{m},\tilde{n})}\right),
\end{align}
with $C_0=a_l^2/2$, $k_0=\rho_{l,0}-P_{l,0}/a_l^2>0$, $a_0=a_g^2/a_l^2$ and
\begin{align*}
  b(\tilde{m},\tilde{n})=k_0-\tilde{m}-a_0\tilde{n}, \quad c(\tilde{m},\tilde{n})=4k_0a_0\tilde{n}.
\end{align*}
$\tilde{\mu}$ and $\tilde{\lambda}$ are the viscosity constants,
satisfying
\begin{align}\label{eq.mu.1}
  \tilde{\mu}>0,\quad 2\tilde{\mu}+d\tilde{\lambda}\gs 0.
\end{align}

For the one-dimensional case, the existence and/or uniqueness of the
global weak solution to the free boundary value problem was studied
in \cite{EFF09,EK09,YZ09,YZ10} where the liquid is incompressible
and the gas is polytropic, and in \cite{EK08} where both of two
fluids are compressible.
However, there are few results for multi-dimensional cases except for some computational results \cite{PT07}. As a
generalization of the results in \cite{EK08}, the existence of the
global solution to the 2D model was obtained in \cite{YZZ10} for
small initial energies. In \cite{YZZ11}, a blow-up criterion for the
2D model was proved in terms of the upper bound of the liquid mass
for the strong solution in a smooth bounded domain.

One of the main results of the present paper is the
existence and uniqueness of the  global strong solution to the
Cauchy problem~\eqref{eq.system}--\eqref{eq.data} under the
framework of Besov spaces, for all multi-dimensions $d\gs 2$,
provided that the initial data are close to a constant equilibrium
state. The other result is the local
well-posedness and the continuation criterion to the Cauchy problem
with general initial data. Because of the similarity of the viscous
liquid-gas two-phase flow model to the compressible Navier-Stokes
equations, we can apply some ideas adopted in the proof of
well-posedness for the compressible Navier-Stokes equations to deal
with the two-phase flow model. It is Danchin who first makes
important progress in applying the Littlewood-Paley theory and Besov
spaces to sovle the existence and uniqueness for the compressible
Navier-Stokes equations or barotropic viscous fluid in
\cite{Danchin00, Danchin07} and for the flows of compressible
viscous and heat-conductive gases in \cite{Danchin01,Danchin01a}.
However, it is non-trivial to apply directly the ideas used in
single-phase models into the two-phase models because the momentum
equation is given only for the mixture and that the pressure
involves the masses of two phases in a nonlinear way, which makes it
rather difficult to obtain the estimates of the masses and the mixed
velocity $(\tilde{m},\tilde{n},\uu)$ in Lebesgue spaces $L^p$ with
respect to the time. In addition, it seems impossible to get the
estimates of $\tilde{m}$ and $\tilde{n}$ from the system
simultaneously due to  the strong coupling among the
corresponding terms, even if we change the variables
$(\tilde{m},\tilde{n})$ linearly.

To overcome these difficulties in global well-posedness theory, we make use of a nonlinear variable transform so as to separate the two mass variables from each other,
which enable us to decompose the original system into a transport
equation and a coupled hyperbolic-parabolic system. To be more
precisely, we first divide the momentum equation by $\tilde{m}$
(which supplies additional information) and take a new variable
$n=a_0(\tilde{n}/\tilde{m}-\nb/\mb)$ for some constants $\nb$ and $\mb$. This makes the resulted equation for $n$
a homogeneous transport equation with the velocity $\uu$, and the
expected estimates of the new variable depend only on the mixed
velocity. Then, we remove the linear terms involving $n$ from the
momentum equation so as to separate linearly the equation about $n$
from the others, which can be done by virtue of the variables
changes with a careful choice of coefficient factors. Finally, to
establish the \emph{a priori} estimates for the global existence theory,
we deal with the linearized system directly instead of separating
the velocity into the compressible and incompressible parts.

As for the local well-posedness theory for general data, we need to
reformulate the original system and deal with the resulted
nonlinear system directly, and in terms of the improvement of the
\emph{a priori} estimates on the densities, we can generalize the local
well-posedness result in \cite{BCDbook,Danchin07} to the two-phase flow
model \eqref{eq.system} with the  specified pressure function.

Before stating the main results, we introduce some notations.
Throughout the paper, $C$ (or $c$) stands for a harmless constant,
and we sometimes use $A\lesssim B$ to stand for $A\ls CB$. $B^s$ and
$B^{s,t}$ denote usual homogeneous Besov spaces and hybrid Besov
spaces, respectively; $\LL(B^{s,t})$ and $\tilde{\mathcal{C}}(B^{s,t})$ are mixed time-spatial spaces, see the appendix for details. Let us now
introduce the functional spaces which appear in the theorems.
\begin{definition}
 For $T>0$ and $s\in\R$, we denote
\begin{align*}
  E_T^s=\big\{(m,n,\uu):\; &n\in \mathcal{C}([0,T];B^{s-1,s}(\R^d))\\
  &m\in\mathcal{C}([0,T];B^{s-1,s}(\R^d))\cap L^1([0,T];B^{s+1,s}(\R^d))\\
  &\uu\in\big(\mathcal{C}([0,T];B^{s-1}(\R^d))\cap L^1([0,T];B^{s+1}(\R^d))\big)^d\big\},
\end{align*}
and
\begin{align*}
  \norm{(m,n,\uu)}_{E_T^s}=\norm{n}_{\LL([0,T];B^{s-1,s})} &+\norm{m}_{\LL([0,T];B^{s-1,s})} +\norm{\uu}_{\LL([0,T];B^{s-1})} \\ &+\norm{m}_{L^1([0,T];B^{s+1,s})}+\norm{\uu}_{L^1([0,T];B^{s+1})}.
\end{align*}

We use the notation $E^s$ if $T=+\infty$, changing $[0,T]$ into $[0,\infty)$ in the definition above.
\end{definition}

\begin{definition}
  Let $\alpha\in[0,1]$ and $T>0$, denote
\begin{align*}
  F_T^\alpha:= &(\tilde{\mathcal{C}}([0,T];B^{d/2,d/2+\alpha}))^{1+1}\\
  &\qquad\times(\tilde{\mathcal{C}}([0,T]; B^{d/2-1,d/2-1+\alpha})\cap L^1([0,T];B^{d/2+1,d/2+1+\alpha}))^d.
\end{align*}
\end{definition}

Now, we state the global well-posedness results briefly as follows.
For more information about the solution, one can see Theorem
\ref{thm.2} in the second section.
\begin{theorem}[Global well-posedness for small data]\label{thm.1}
Let $d\gs 2$, $\nb\gs 0$, $\mb>(1-\mathrm{sgn}\nb)k_0$, $\tilde{\mu}>0$ and
$2\tilde{\mu}+d\tilde{\lambda}\gs 0$, in addition,
$\tilde{\mu}+\tilde{\lambda}>0$ if $d=2$. There exist two positive
constants $\sigma$ and $Q$ such that if $\tilde{m}_0-\mb$, $\tilde{n}_0-\nb\in
B^{d/2-1,d/2}$ and
$\uu_0\in B^{d/2-1}$ satisfying
  \begin{align}\label{eq.thm.1}
\norm{\tilde{m}_0-\mb}_{B^{d/2-1,d/2}} +\norm{\tilde{n}_0-\nb}_{B^{d/2-1,d/2}}+\norm{\uu_0}_{B^{d/2-1}}\ls\sigma,
  \end{align}
  then the following results hold

  {\rm (i)} Existence: The system \eqref{eq.system} has a solution $(\tilde{m},\tilde{n},\uu)$  satisfying
  $$\tilde{m}-\mb,\; \tilde{n}-\nb\in \mathcal{C}\left(\R^+;B^{d/2-1,d/2}\right), \quad \uu\in \mathcal{C}\left(\R^+;B^{d/2-1}\right),$$
   and moreover,
  \begin{align*}
  &\norm{(a(\tilde{m}-\mb)+ba_0 (\tilde{n}/\tilde{m}-\nb/\mb),\tilde{n}/\tilde{m}-\nb/\mb,\uu)}_{E^{d/2}}\\
  \ls& Q\big(\norm{\tilde{m}_0-\mb}_{B^{d/2-1,d/2}}+\norm{\tilde{n}_0-\nb}_{B^{d/2-1,d/2}} +\norm{\uu_0}_{B^{d/2-1}}\big),
  \end{align*}
  where the constants $a$ and $b$ are defined by
  \begin{align}\label{eq.ab}
  \begin{aligned}
    a= & \frac{1}{\mb^2}\left(a_0\nb+\mb+\frac{(\mb-a_0\nb)(\mb-a_0\nb-k_0)}{ \sqrt{(\mb+a_0\nb-k_0)^2+4k_0a_0\nb}}\right)>0, \\  b= & 1+\frac{(\mb+a_0\nb+k_0)}{\sqrt{(\mb+a_0\nb-k_0)^2+4k_0a_0\nb}}>0.
  \end{aligned}
\end{align}

  {\rm (ii)} Uniqueness: Uniqueness holds in $\mathcal{C}\left(\R^+; (B^{d/2-1,d/2})^{1+1}\times (B^{d/2})^d\right)$ if $d\gs 3$. If $d=2$, one should also suppose that $\tilde{m}_0-\mb$, $\tilde{n}_0-\nb\in B^{\eps,1+\eps}$ and $\uu_0\in B^{\eps}$ for a $\eps\in(0,1)$, to get uniqueness in $\mathcal{C}(\R^+; (B^{0,1})^{1+1}\times (B^1)^d)$.
\end{theorem}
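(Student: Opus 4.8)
The plan is to recast \eqref{eq.system} near the equilibrium as a homogeneous transport equation \emph{linearly decoupled} from a Lamé-type hyperbolic–parabolic system, to prove \emph{global}-in-time linear estimates for each block in the hybrid Besov framework, and then to close a fixed-point argument under the smallness hypothesis \eqref{eq.thm.1}. \emph{Reformulation.} Since $\tilde m$ and $\tilde n$ are both transported by $\uu$ (the first two equations of \eqref{eq.system}), so is the ratio $\tilde n/\tilde m$; hence the new unknown $n:=a_0(\tilde n/\tilde m-\nb/\mb)$ solves $\partial_t n+\uu\cdot\nabla n=0$, an equation whose estimates involve $\uu$ alone. Dividing the momentum equation by $\tilde m$ (legitimate since $\mb>0$ under the stated hypotheses, so smallness keeps $\tilde m$ away from $0$) replaces the viscous part by $\tilde\mu\Delta\uu/\tilde m+(\tilde\mu+\tilde\lambda)\nabla\dive\uu/\tilde m$ and produces the force $\nabla P(\tilde m,\tilde n)/\tilde m$; linearizing this force around $(\mb,\nb)$ via the explicit formula \eqref{eq.pressure} (note $b^2+c>0$ at equilibrium, so the square root is analytic there) gives a fixed linear combination of $\nabla(\tilde m-\mb)$ and $\nabla(\tilde n-\nb)$. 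I then introduce the effective density $m:=a(\tilde m-\mb)+ba_0(\tilde n/\tilde m-\nb/\mb)$ with $a,b$ \emph{exactly} as in \eqref{eq.ab}; the reason for that choice is that it annihilates the coefficient of $\nabla n$ in the leading part of the $\uu$-equation. The system then takes the schematic form
\begin{align*}
  &\partial_t n+\uu\cdot\nabla n=0,\\
  &\partial_t m+\nu\,\dive\uu=F(m,n,\uu),\\
  &\partial_t\uu-\bar\mu\Delta\uu-(\bar\mu+\bar\lambda)\nabla\dive\uu+\nu'\nabla m=\GG(m,n,\uu),
\end{align*}
where $\nu,\nu',\bar\mu,\bar\lambda$ are positive constants (the viscosity conditions, including $\tilde\mu+\tilde\lambda>0$ for $d=2$, make the Lamé operator genuinely parabolic), and $F,\GG$ collect terms that are at least quadratic in $(m,n,\uu)$ and their first derivatives, multiplied by smooth functions of $(m,n)$ vanishing at the origin.

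\emph{Linear estimates.} For the transport block I would use the standard hybrid Besov transport estimate, controlling $\norm{n}_{\LL([0,T];B^{d/2-1,d/2})}$ by $\norm{n_0}_{B^{d/2-1,d/2}}\exp\!\big(C\!\int_0^T\!\norm{\nabla\uu}_{B^{d/2}}\,dt\big)$. For the $(m,\uu)$ block — a linearized barotropic compressible Navier–Stokes type system — I would invoke the Danchin-type a priori estimate: a low/high frequency decomposition in which the low modes behave like a damped wave (yielding $L^1$-in-time integrability) and the high modes like a parabolic system for $\uu$ coupled to a damped equation for $m$, the damping being transmitted through the couplings $\nu\,\dive\uu$ and $\nu'\nabla m$. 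This gives
\begin{align*}
  \norm{(m,\uu)}_{E_T^{d/2}}\ls C\Big(\norm{m_0}_{B^{d/2-1,d/2}}+\norm{\uu_0}_{B^{d/2-1}}+\norm{F}_{L^1([0,T];B^{d/2-1,d/2})}+\norm{\GG}_{L^1([0,T];B^{d/2-1})}\Big)
\end{align*}
with $C$ independent of $T$, which is what makes a global solution reachable.

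\emph{Nonlinear closure and uniqueness.} Using the paraproduct and composition estimates in (hybrid) Besov spaces, the right-hand sides obey $\norm{F}_{L^1(B^{d/2-1,d/2})}+\norm{\GG}_{L^1(B^{d/2-1})}\lesssim\norm{(m,n,\uu)}_{E_T^{d/2}}^2$ so long as $\norm{(m,n,\uu)}_{E_T^{d/2}}$ stays in a fixed small ball (which also bounds $\int_0^T\norm{\nabla\uu}_{B^{d/2}}\,dt$). Then the map sending an input $(m,n,\uu)$ to the solution of the \emph{linear} system with forces $F,\GG$ built from that input preserves a small ball of $E^{d/2}$; a standard iteration — contracting in a norm with one derivative less, the usual device at the critical index — produces a global solution, and reverting to $(\tilde m,\tilde n,\uu)$ yields the asserted bound, with $Q$ coming from the linear estimate and $\sigma$ chosen so that \eqref{eq.thm.1} places the data well inside the ball. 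For uniqueness I would estimate the difference of two solutions lying in the class stated in the theorem, working one derivative below the regularity of $\uu$ so that the worst products close, and absorbing all commutator and product terms by Grönwall; for $d=2$ this loses a logarithm at the critical exponent, which forces the extra hypothesis $\tilde m_0-\mb,\tilde n_0-\nb\in B^{\eps,1+\eps}$, $\uu_0\in B^\eps$, propagated by the same scheme, and the difference estimate is then carried out in $(B^{0,1})^{1+1}\times(B^1)^d$.

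\emph{Main obstacle.} The two delicate points are: first, discovering the nonlinear change of variables and verifying that the precise constants $a,b$ of \eqref{eq.ab} are exactly those that remove $\nabla n$ from the principal part of the velocity equation, so that $n$ truly decouples at the linear level; and second, establishing the \emph{global}-in-time linear estimate for the coupled $(m,\uu)$ system in hybrid Besov norms, where the low-frequency damped-wave structure must be exploited carefully. Once these are in hand, the nonlinear closure and the uniqueness argument reduce to routine Littlewood–Paley bookkeeping.
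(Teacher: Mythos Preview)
Your overall strategy---the nonlinear change of variables $n=a_0(\tilde n/\tilde m-\nb/\mb)$, $m=a(\tilde m-\mb)+bn$ to decouple a pure transport equation from a barotropic-type $(m,\uu)$ system, Danchin-type hybrid-Besov linear estimates for the latter, and a quadratic nonlinear closure---is exactly the paper's. Two points deserve correction or comment.

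First, your schematic for the $(m,\uu)$ block omits the convection terms $\uu\cdot\nabla m$ and $\uu\cdot\nabla\uu$ from the left-hand side, so they must sit inside your $F,\GG$; but then your claimed bound $\norm{F}_{L^1(B^{d/2-1,d/2})}\lesssim\norm{(m,n,\uu)}_{E_T^{d/2}}^2$ fails at high frequency: from $m\in L^\infty_T(B^{d/2})$ you only get $\nabla m\in L^\infty_T(B^{d/2-1})$, and the product $\uu\cdot\nabla m$ lands in $B^{d/2-1}$, not $B^{d/2}$. The paper keeps the convection $\vv\cdot\nabla(\cdot)$ on the left of its linearized system (Proposition~2.2) and handles it through the commutator-type inner-product estimates of Lemma~A.6, paying the price of an $e^{CV(T)}$ prefactor with $V(T)=\int_0^T\norm{\vv}_{B^{d/2+1}}\,d\tau$; this factor is then absorbed in the bootstrap since $\norm{\uu}_{L^1(B^{d/2+1})}$ is part of the small $E^{d/2}$ norm. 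You would discover this once you tried to close the estimate, but the linear statement you wrote is not the one that works.

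Second, for existence you propose a Picard scheme contracting one derivative below, whereas the paper builds Friedrichs-mollified approximants $(m^\ell,n^\ell,\uu^\ell)$, proves uniform $E^{d/2}$ bounds via the same proposition, and passes to the limit by Arzel\`a--Ascoli after establishing uniform H\"older-in-time bounds on the discrepancy to the free linear flow. The compactness route delivers existence for $d=2$ with no extra hypothesis (the $B^{\eps,1+\eps}$ assumption is used \emph{only} for uniqueness), while your contraction in $E^{d/2-1}$ would, for $d=2$, land on the excluded endpoint $s=0$ of the admissible range $(1-d/2,d/2+1]=(0,2]$ in the linear estimate. So your iteration works cleanly for $d\ge3$ but not for $d=2$; the paper's approximation-plus-compactness argument avoids this.
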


For the general data bounded away from the infinity and the vacuum,
we have the following local well-posedness theory (one can refer to
Theorem \ref{thm.4} for the corresponding statement in terms of new
variables).

\begin{theorem}[Local well-posedness for general data]\label{thm.3}
  Let $d\gs 2$, $\tilde{\mu}>0$, $2\tilde{\mu}+d\tilde{\lambda}\gs 0$, the constants $\mb>0$ and $\nb\gs 0$. Assume that $\tilde{m}_0^{-1}-\mb^{-1}\in B^{d/2,d/2+1}$, $\tilde{n}_0-\nb\in B^{d/2,d/2+1}$ and $\uu_0\in B^{d/2-1,d/2}$. In addition, $\sup_{x\in\R^d} \tilde{m}_0(x)<\infty$. Then there exists a positive time $T$ such that the system \eqref{eq.system} has a unique solution $(\tilde{m},\tilde{n},\uu)$ on $[0,T]\times\R^d$ and that $(\tilde{m}^{-1}-\mb^{-1},\tilde{n}-\nb,\uu)$ belongs to $F_T^1$ and satisfies $\sup_{(t,x)\in[0,T]\times\R^d} \tilde{m}(t,x)<\infty$.
\end{theorem}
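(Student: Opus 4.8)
The plan is to first recast \thmref{thm.3} as an existence-and-uniqueness statement for a reformulated system (this is the content of \thmref{thm.4}) and then to run an iterative scheme in the hybrid Besov framework of the appendix. I would introduce the new unknowns $J:=1/\tilde m$, $q:=J-1/\mb$ and $n:=a_0(\tilde n/\tilde m-\nb/\mb)$, the same nonlinear change of variables used for the global theory. Since $\tilde m$ and $\tilde n$ satisfy the \emph{same} transport equation, $\tilde n/\tilde m$, hence $n$, obeys the \emph{homogeneous} transport equation $\partial_t n+\uu\cdot\nabla n=0$; the first mass law becomes $\partial_t q+\uu\cdot\nabla q=(q+1/\mb)\dive\uu$; and dividing the momentum equation by $\tilde m$ yields $\partial_t\uu+\uu\cdot\nabla\uu-\tfrac{1}{\tilde m}(\tilde\mu\Delta\uu+(\tilde\mu+\tilde\lambda)\nabla\dive\uu)=-J\nabla P$. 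Finally one re-expands the pressure: $P$ is a smooth function of $(\tilde m,\tilde n)$, hence of $(q,n)$, on any region where $b^2+c$ stays bounded away from $0$, so $J\nabla P$ splits into linear terms in $\nabla q$ and $\nabla n$ plus a smooth remainder vanishing at the equilibrium $(q,n)=(0,0)$. A composition-estimate argument then shows that $(\tilde m^{-1}-\mb^{-1},\tilde n-\nb,\uu)\in F_T^1$ is equivalent to $(q,n,\uu)\in F_T^1$, so it suffices to solve the reformulated system.

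Next I would construct approximate solutions $(q^k,n^k,\uu^k)$ by freezing coefficients: let $n^{k+1},q^{k+1}$ solve the linear transport equations above with velocity $\uu^k$, and let $\uu^{k+1}$ solve the linear parabolic (Lam\'e) system with variable viscosity coefficient $1/\tilde m^{k}$ and source assembled from $\uu^k$, $\nabla q^{k+1}$ and $\nabla n^{k+1}$. The hypotheses on $\tilde m_0$ enter precisely here: $\tilde m_0^{-1}-\mb^{-1}\in B^{d/2,d/2+1}\hookrightarrow L^\infty$ bounds $\tilde m_0$ below away from $0$, while $\sup_x\tilde m_0(x)<\infty$ bounds it above, so $1/\tilde m_0$ is bounded above and below and the Lam\'e operator is uniformly elliptic; these bounds propagate to $\tilde m^k(t)$ on a short interval via the transport estimates. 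Combining the transport estimates in hybrid Besov spaces (with their logarithmic commutator corrections), the smoothing estimate for the variable-coefficient Lam\'e system that controls the $F_T^1$-norm of $\uu^{k+1}$, and the product and composition laws for the nonlinearities, I would show that $(q^k,n^k,\uu^k)$ stays in a fixed ball of $F_T^1$ for $T$ small enough, depending only on the data.

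The main obstacle is the variable coefficient $1/\tilde m^k$ in the principal part of the velocity equation. Unlike the small-data global theory, $1/\tilde m_0-1/\mb=q_0$ need not be small, so the perturbation $(1/\tilde m^k-1/\mb)(\tilde\mu\Delta\uu^{k+1}+\cdots)$ cannot be absorbed by a constant-coefficient parabolic gain; one must instead use a genuine \emph{variable-coefficient} smoothing estimate for the Lam\'e operator $\partial_t-\tfrac{1}{\tilde m}(\tilde\mu\Delta+(\tilde\mu+\tilde\lambda)\nabla\dive)$, in the spirit of \cite{BCDbook,Danchin07}, whose constant depends only on the upper and lower bounds of $\tilde m$ and on $\norm{q}_{\LL([0,T];B^{d/2,d/2+1})}$. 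The ``improved a priori estimates on the densities'' announced in the introduction are these hybrid-Besov transport bounds, which keep $q^k$ in a ball of $B^{d/2,d/2+1}$ uniformly in $k$ and $t\in[0,T]$. A secondary, minor obstacle is that $P$ contains a square root: before applying the composition estimates one must propagate the $L^\infty$ bound on $(q^k,n^k)$ so that $b^2+c$ stays bounded away from $0$ uniformly in $k$ and $t$.

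Finally, convergence and uniqueness are routine given the uniform bounds. I would show that $(q^k,n^k,\uu^k)$ is Cauchy in the lower-regularity space $F_T^0$ — one loses one derivative in estimating differences of transported quantities, but this is harmless since the iterates are uniformly bounded in $F_T^1$ — then interpolate to get strong convergence in $F_T^{1-\eta}$ for every $\eta>0$ and recover the limit in $F_T^1$ by weak-$*$ compactness together with the time-continuity in $\tilde{\mathcal C}$. For uniqueness, the difference of two solutions with the same data solves the linearized system with zero data, and the same transport and parabolic inequalities plus a Gronwall argument close the estimate in $F_T^0$ on $[0,T]$; no extra regularity is needed in dimension $d=2$ because at this level the density lies in the Lipschitz range ($B^{d/2+1}\hookrightarrow W^{1,\infty}$) and the hybrid structure of the velocity slot $B^{d/2-1,d/2}$ absorbs the borderline low-frequency terms. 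Translating back through the change of variables and using $\sup_x\tilde m_0(x)<\infty$ to propagate $\sup_{[0,T]\times\R^d}\tilde m<\infty$ completes the proof of \thmref{thm.3}.
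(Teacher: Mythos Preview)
Your overall strategy is sound and would produce a valid proof, but it diverges from the paper's in two substantive ways.

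\textbf{Reformulation.} You keep the ratio variable $n=a_0(\tilde n/\tilde m-\nb/\mb)$ from the global theory, together with $q=\tilde m^{-1}-\mb^{-1}$, so that the second mass equation becomes the \emph{homogeneous} transport equation $\partial_t n+\uu\cdot\nabla n=0$. The paper instead sets $\rho=\mb(\tilde m^{-1}-\mb^{-1})$ and $g=\tilde n-\nb$, obtaining the pair of \emph{inhomogeneous} transport equations $\rho_t+\uu\cdot\nabla\rho=(\rho+1)\dive\uu$ and $g_t+\uu\cdot\nabla g=-(g+\nb)\dive\uu$, handled simultaneously by a single transport estimate (Proposition~\ref{prop.2}). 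Your choice gives a cleaner equation for $n$ but forces an extra composition/product argument to pass between $n\in B^{d/2,d/2+1}$ and $\tilde n-\nb\in B^{d/2,d/2+1}$; the paper's $g$ is already the quantity appearing in the statement of \thmref{thm.3}, so no translation is needed.

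\textbf{Construction scheme.} You propose a Picard iteration (freeze coefficients, solve linear transport and linear variable-coefficient Lam\'e problems, then show the sequence is Cauchy in $F_T^0$ and pass to the limit by interpolation). The paper instead runs a Friedrichs regularization directly on the full nonlinear system: the spectrally truncated problem is an ODE in $L^2_\ell$, solved by Cauchy--Lipschitz, and one then derives uniform $F_T^1$ bounds via the same a~priori estimates (\lemref{lem.moment} and Proposition~\ref{prop.2}), extracts a limit by Arzel\`a--Ascoli and local compact embeddings, and proves uniqueness separately in the larger space $F_T$ (which coincides with your $F_T^0$). The Friedrichs route sidesteps two ingredients your scheme needs implicitly: an existence theory for the \emph{linear} variable-coefficient Lam\'e system at each step, and the contraction/Cauchy estimate for successive iterates. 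Your route is more constructive and makes the smallness-of-$T$ mechanism more transparent; the paper's is shorter because uniform bounds plus compactness replace the Cauchy argument.

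Both approaches rely on the same key analytic inputs---the hybrid-Besov transport estimates and the variable-coefficient parabolic smoothing estimate of \cite{BCDbook,Danchin07}---and your identification of these as the crux, together with the need to propagate upper and lower bounds on $\tilde m$ (equivalently on $1+\rho$), matches the paper exactly. Your remark that no extra regularity is required for uniqueness when $d=2$ is also correct and agrees with the paper's local uniqueness proof.
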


We also have the following continuation criterion for the local existence of the solution (see also Proposition \ref{prop.cc}).

\begin{theorem}[Continuation criterion]
    Under the hypotheses of Theorem \ref{thm.3}, assume that the system \eqref{eq.system} has a solution $(\tilde{m},\tilde{n},\uu)$ on $[0,T)\times\R^d$ such that $(\tilde{m}^{-1}-\mb^{-1},\tilde{n}-\nb,\uu)$ belongs to $F_{T'}^1$ for all $T'<T$ and satisfies
  \begin{align*}
   & \tilde{m}^{-1}-\mb^{-1},\tilde{n}-\nb \in L^\infty([0,T);B^{d/2,d/2+1}),\\
    &  \sup_{(t,x)\in [0,T)\times\R^d} \tilde{m}(t,x)<\infty, \quad \int_0^T \norm{\nabla\uu}_{\infty}dt<\infty.
  \end{align*}
  Then, there exists some $T^*>T$ such that $(\tilde{m},\tilde{n},\uu)$ may be continued on $[0,T^*]\times\R^d$ to a solution of \eqref{eq.system} such that $(\tilde{m}^{-1}-\mb^{-1},\tilde{n}-\nb,\uu)$ belongs to $F_{T^*}^1$.
\end{theorem}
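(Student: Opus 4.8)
The plan is to reduce the continuation to restarting the local existence theorem at the time $T$, which is possible as soon as one knows that the $F^1_{T'}$-norm of $(\tilde{m}^{-1}-\mb^{-1},\tilde{n}-\nb,\uu)$ stays bounded as $T'\uparrow T$. Granting such a uniform bound, the limiting functions $(\tilde{m}^{-1}-\mb^{-1})(T)$ and $(\tilde{n}-\nb)(T)$ are well defined in $B^{d/2,d/2+1}$ and $\uu(T)\in B^{d/2-1,d/2}$; moreover, integrating the transport identity $\partial_t\log\tilde{m}+\uu\cdot\nabla\log\tilde{m}=-\dive\uu$ along the flow of $\uu$ and using $\int_0^T\norm{\nabla\uu}_\infty\,dt<\infty$ shows $\sup_{[0,T]\times\R^d}\tilde{m}<\infty$. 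Then \thmref{thm.3}, applied with initial time $T$ to the (autonomous) system, yields a solution on $[T,T+\tau]$ for some $\tau>0$, and pasting it to the original solution gives the asserted continuation on $[0,T^*]$ with $T^*:=T+\tau$. Hence everything reduces to \emph{a priori} bounds on $[0,T')$, uniform with respect to $T'<T$, expressed in terms of the three quantities assumed finite in the statement.

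To obtain these bounds I would work with the reformulated system of \thmref{thm.4}: the two mass quantities satisfy transport equations of the type $\partial_t f+\uu\cdot\nabla f=\pm f\,\dive\uu$, while dividing the momentum equation by $\tilde{m}$ turns the velocity equation into a Lamé-type parabolic equation $\partial_t\uu-\mb^{-1}(\tilde{\mu}\Delta\uu+(\tilde{\mu}+\tilde{\lambda})\nabla\dive\uu)=G$, where $G$ collects the convective term $-\uu\cdot\nabla\uu$, the pressure term $-\tilde{m}^{-1}\nabla P(\tilde{m},\tilde{n})$ and the non-constant-viscosity remainder $(\tilde{m}^{-1}-\mb^{-1})(\tilde{\mu}\Delta\uu+(\tilde{\mu}+\tilde{\lambda})\nabla\dive\uu)$. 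Using maximal regularity for the constant-coefficient Lamé semigroup, I would bound $\norm{\uu}_{\LL([0,T'];B^{d/2-1,d/2})}+\norm{\uu}_{L^1([0,T'];B^{d/2+1,d/2+2})}$ by the data plus $\norm{G}_{L^1([0,T'];B^{d/2-1,d/2})}$; in $G$, the convective term is handled by the product law and the interpolation inclusion $\uu\in L^2([0,T'];B^{d/2,d/2+1})$, the pressure term by the composition estimate for $P(\tilde{m},\tilde{n})-P(\mb,\nb)$ in $B^{d/2,d/2+1}$ (legitimate since $\tilde{m}$ is bounded above and below on $[0,T')$), and the remainder by the standard localization-in-frequency-and-time device for variable second-order coefficients. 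This turns the hypothesis $\int_0^T\norm{\nabla\uu}_\infty\,dt<\infty$ into a bound for $\int_0^{T'}\norm{\nabla\uu}_{B^{d/2,d/2+1}}\,d\tau$.

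For the mass quantities I would then invoke the transport estimate in critical Besov spaces. At the low-frequency level $B^{d/2}$ this is tame: only $\int_0^{T'}\norm{\nabla\uu}_\infty\,d\tau$ enters the Gronwall factor. The genuine difficulty is the high-frequency level $B^{d/2+1}$, which is exactly the critical endpoint $1+d/2$ for $p=2$; there the transport estimate is no longer tame when $\nabla\uu$ is controlled only in $L^\infty$, and it carries a logarithmic loss, so plain Gronwall has to be replaced by an Osgood argument. Concretely, I would feed the already-proved $L^1([0,T'];B^{d/2+1,d/2+2})$ bound for $\uu$ into the logarithmic interpolation inequality $\norm{\nabla\uu}_{B^{d/2}}\lesssim\norm{\nabla\uu}_\infty\log\big(e+\norm{\uu}_{B^{d/2+1,d/2+2}}/\norm{\nabla\uu}_\infty\big)$ and apply Osgood's lemma to the resulting integral inequalities for $\norm{\tilde{m}^{-1}-\mb^{-1}}_{\LL([0,t];B^{d/2,d/2+1})}$ and $\norm{\tilde{n}-\nb}_{\LL([0,t];B^{d/2,d/2+1})}$. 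This step, where the improved density estimates announced in the introduction are needed, is the main obstacle. Once all the constituents of the $F^1_{T'}$-norm have been bounded uniformly in $T'<T$, the reduction in the first paragraph completes the proof.
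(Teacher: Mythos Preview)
Your overall plan—bound the $F^1_{T'}$-norm uniformly in $T'<T$ and then restart local existence—is the right one, and it is what the paper does. But you have made the problem much harder than it is, because you have overlooked that the $L^\infty([0,T);B^{d/2,d/2+1})$ bounds on the two mass quantities are part of the \emph{hypotheses} of the theorem. Your entire third paragraph (transport estimates at the endpoint regularity $d/2+1$, logarithmic loss, Osgood's lemma) is therefore unnecessary: there is nothing to prove for $\rho=\mb(\tilde m^{-1}-\mb^{-1})$ and $g=\tilde n-\nb$, since their $\LL([0,T');B^{d/2,d/2+1})$ norms are already controlled by the assumed $L^\infty([0,T);B^{d/2,d/2+1})$ bound. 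The only quantity for which a uniform estimate must actually be \emph{derived} is $\uu$.

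For $\uu$, the paper's argument is also simpler than what you propose. Rather than perturbing off the constant-coefficient Lam\'e semigroup (which, as you note, forces you to absorb a term of the same strength as the one you want to bound), the paper applies directly the variable-coefficient estimate of \lemref{lem.moment} to the equation $\partial_t\uu+\uu\cdot\nabla\uu-(1+\rho)(\mu\Delta\uu+(\mu+\lambda)\nabla\dive\uu)=-Q(\rho,g)$. In the situation where the transport vector fields $\vv,\ww$ depend linearly on $\uu$, that lemma gives
\[
\norm{\uu}_{\LL([0,T'];B^{d/2-1,d/2})}+\underline{\nu}\norm{\uu}_{L^1([0,T'];B^{d/2+1,d/2+2})}
\le C\Bigl(\norm{\uu_0}_{B^{d/2-1,d/2}}+\int_0^{T'}\norm{Q(\rho,g)}_{B^{d/2-1,d/2}}\,d\tau\Bigr)
e^{C\int_0^{T'}(\norm{\nabla\uu}_\infty+\norm{\rho}_{B^{d/2+1}}^{2})\,d\tau}.
\]
The exponential is bounded by the assumed $\int_0^T\norm{\nabla\uu}_\infty\,dt<\infty$ together with the assumed $\rho\in L^\infty([0,T);B^{d/2+1})$; the forcing term is bounded by $T'$ times a constant depending on $\norm{(\rho,g)}_{L^\infty([0,T);B^{d/2,d/2+1})}$. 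This yields the uniform velocity bound in one stroke.

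Finally, the paper restarts the local theory at time $T-\eps$ (for $\eps$ small enough that the lifespan exceeds $2\eps$) rather than at $T$, which avoids having to argue separately that $(\rho,g,\uu)(T)$ exists as a limit; uniqueness on $[T-\eps,T)$ then glues the two pieces. Your route through $T$ would also work but requires an extra continuity step.
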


\begin{remark}
  The results of the present paper are independent of the special structure \eqref{eq.pressure} of the nonlinear
pressure term $P$. Indeed, the similar results hold true as long as the term $\nabla P/\tilde{m}$ can be decomposed into a linear term involving the modified mass and some  nonlinear terms, similarly as \eqref{eq.2.1} in the next section.
\end{remark}

The rest of this paper is organized as follows. In Section
\ref{sec.gwp}, we investigate the global well-posedness of the
Cauchy problem. We first reformulate the system through changing
variables in order to obtain \emph{a priori} estimates in the
subsection \ref{subsec.gwp1}. In the subsection \ref{subsec.gwp2},
we are devoted to deriving \emph{a priori} estimates for the
transport equation and the linear coupled parabolic-hyperbolic
system with convection terms. The subsection \ref{subsec.gwp3}
involves the proof of the existence and uniqueness of the solution.
In Section \ref{sec.lwp}, we prove the local well-posedness of the
problem through some subsections similar to the global ones. An
appendix is devoted to recalling some properties of the
Littlewood-Paley decomposition and Besov spaces which we have used
in this paper.

\section{Global well-posedness for small data}\label{sec.gwp}

\subsection{Reformulation of the system}\label{subsec.gwp1}

Let $\nb\gs 0$ and $\mb>(1-\mathrm{sgn}\nb)k_0$, we introduce new variables
$n=a_0(\tilde{n}/\tilde{m}-\nb/\mb)$ and $m=a(\tilde{m}-\mb)+b n$,
i.e. $\tilde{m}=\mb+(m-b n)/a$ in order to cancel the
linear terms involving one modified mass from the momentum equation, where
$a$ and $b$ are positive constants defined in \eqref{eq.ab}. We also denote $n_0=a_0(\tilde{n}_0/\tilde{m}_0-\nb/\mb)$ and
$m_0=a(\tilde{m}_0-\mb)+bn_0$ throughout the sections for the global well-posedness theory.
Then, we have
\begin{align*}
  \frac{P}{C_0}=\left(1+\frac{a_0\nb}{\mb}+n\right)\tilde{m}-k_0
  +\sqrt{\left(\left(1+\frac{a_0\nb}{\mb}+n\right) \tilde{m}-k_0\right)^2 +4k_0\left(n+\frac{a_0\nb}{\mb}\right)\tilde{m}}.
\end{align*}
Taking the gradient of both sides, we get
\begin{align}\label{eq.2.1}
  \frac{\nabla P}{C_0\tilde{m}}=&\nabla m+\HH(m,n),
\end{align}
where the nonlinear term is
\begin{align*}
  \HH(m,n):=&\frac{\nabla m-b\nabla n}{a^2\mb^2\tilde{m}}\Big(-(a_0\nb+\mb)m+(a\mb^2+b(a_0\nb+\mb))n\Big)\\
  &+(K(m,n)-K(0,0))\Big\{(n+\frac{a_0\nb}{\mb}+1)\tilde{m}\nabla n+(n+\frac{a_0\nb}{\mb}+1)^2\nabla \tilde{m}\\
  &\qquad\qquad\qquad\qquad\qquad+k_0(n+\frac{a_0\nb}{\mb}-1)\frac{\nabla m-b\nabla n}{a\tilde{m}}+k_0\nabla n\Big\}\\
  &+K(0,0)\left\{(n+\frac{a_0\nb}{\mb}+1)\frac{\nabla n(m-bn)}{a}+\mb n\nabla n\right.\\
  &\qquad\qquad\qquad+[n^2+2n(\frac{a_0\nb}{\mb}+1)]\frac{\nabla m-b\nabla n}{a}+\frac{k_0}{a\tilde{m}}n(\nabla m-b\nabla n)\\
  &\qquad\qquad\qquad\left.-k_0(\frac{a_0\nb}{\mb}-1)\frac{(m-bn)(\nabla m-b\nabla n)}{a^2\mb\tilde{m}}\right\}.
\end{align*}
Here,
\begin{align*}
  K(m,n)=\frac{1}{\sqrt{\left[\left(\mb+\frac{m-bn}{a}\right)\left(n+\frac{a_0\nb}{\mb}+1\right) -k_0\right]^2 +4 k_0\left(n+\frac{a_0\nb}{\mb}\right)\left(\mb+\frac{m-bn}{a}\right)}},
\end{align*}
and $K(0,0)=1/\sqrt{(\mb+a_0\nb-k_0)^2+4k_0a_0\nb}>0$.

Therefore, with the new unknowns, we can rewrite the Cauchy problem
of the system \eqref{eq.system} as follows
\begin{align}\label{eq.reform.system}
  \left\{\begin{aligned}
    &n_t+\uu\cdot\nabla n=0,\\
    &m_t+\uu\cdot\nabla m+a\mb\dive\uu=F(m,n,\uu),\\
    &\uu_t+\uu\cdot\nabla\uu-\mu\Delta\uu-(\mu+\lambda)\nabla\dive\uu+C_0\nabla m=\GG(m,n,\uu),\\
    &(m,n,\uu)|_{t=0}=(m_0,n_0,\uu_0),
  \end{aligned}\right.
\end{align}
where
\begin{gather*}
\mu= \tilde{\mu}/\mb,\quad \lambda=\tilde{\lambda}/\mb, \quad F(m,n,\uu)=-(m-bn)\dive\uu,\\
 \GG(m,n,\uu)= -C_0\HH(m,n) -\frac{m-bn}{a\tilde{m}}(\mu\Delta\uu+(\mu+\lambda) \nabla\dive\uu).
\end{gather*}
Note here that the first equation in \eqref{eq.reform.system} is a
homogeneous transport equation, the estimates of $n$ depend only on
those of the velocity $\uu$. The second and the third ones  in
\eqref{eq.reform.system} consist of  a coupled parabolic-hyperbolic
system with the modified mass $m$ and mixed velocity $\uu$ involved.
Thus, with the help of the decomposition \eqref{eq.2.1}, the original system is decoupled into a transport equation for the modified gas flow and
a coupled system for the motion of the modified liquid fluid.

We can get the following result for the reformulated system.
\begin{theorem}\label{thm.2}
  Let $d\gs 2$,  $\nb\gs 0$, $\mb>(1-\mathrm{sgn}\nb)k_0$, $\mu>0$ and $2\mu+d\lambda\gs 0$, in addition, $\mu+\lambda>0$ if $d=2$. There exist two positive constants $\eta$ and $Q$ such that if $m_0$, $n_0\in B^{d/2-1,d/2}$ and $\uu_0\in B^{d/2-1}$ satisfying
\begin{equation}\label{thm.2.a}
\norm{m_0}_{B^{d/2-1,d/2}}+\norm{n_0}_{B^{d/2-1,d/2}}+\norm{\uu_0}_{B^{d/2-1}}\ls\eta,
\end{equation}
  then the following results hold:

  {\rm (i)} Existence: The system \eqref{eq.reform.system} has a solution $(m,n,\uu)$ in $E^{d/2}$ which satisfies
  $$\norm{(m,n,\uu)}_{E^{d/2}}\ls Q\big(\norm{m_0}_{B^{d/2-1,d/2}}+\norm{n_0}_{B^{d/2-1,d/2}}+\norm{\uu_0}_{B^{d/2-1}}\big).$$
  It also belongs to the affine space $$(m_L,n_0,\uu_L)+(\mathcal{C}^{1/2}(\R^+;B^{d/2-1}))^{1+1}\times (\mathcal{C}^{1/8}(\R^+; B^{d/2-5/4}))^d,$$
   where $(m_L,\uu_L)$  is the solution of the linear system
  \begin{align}\label{eq.system.app3}
  \left\{\begin{aligned}
    &\partial_t m_L+a\mb\dive \uu_L=0,\\
    &\partial_t\uu_L-\mu\Delta\uu_L -(\mu+\lambda)\nabla\dive\uu_L +C_0\nabla  m_L=0,\\
    &(m_L,\uu_L)|_{t=0}=(m_0,\uu_0).
  \end{aligned}\right.
  \end{align}

  {\rm (ii)} Uniqueness: Uniqueness holds in $E^{d/2}$ if $d\gs 3$. If $d=2$, one should also suppose that $n_0,m_0\in B^{\eps,1+\eps}$ and $\uu_0\in B^{\eps}$ for a $\eps\in(0,1)$, to get uniqueness in $E^1$.
\end{theorem}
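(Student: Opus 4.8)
The plan is to follow Danchin's strategy for the barotropic compressible Navier--Stokes equations, exploiting the fact that the reformulation \eqref{eq.reform.system} has already split off the gas mass $n$ as a solution of a pure transport equation and left a coupled parabolic--hyperbolic system for $(m,\uu)$ whose principal part is exactly the linear system \eqref{eq.system.app3}. First I would establish the two linear a priori estimates on which everything else rests. For the homogeneous transport equation $\partial_t n+\uu\cdot\nabla n=0$, localizing with $\triangle_k$ and using the standard commutator bound for $[\uu\cdot\nabla,\triangle_k]$ gives $\norm{n}_{\tilde L^\infty_T(B^{d/2-1,d/2})}\ls\norm{n_0}_{B^{d/2-1,d/2}}\exp\big(C\int_0^T\norm{\nabla\uu}_{B^{d/2}}\,dt\big)$, and the same argument applied after restoring the convection term in \eqref{eq.system.app3} yields the analogous estimate for $(m,\uu)$ \emph{provided} one can first handle the genuinely coupled constant-coefficient part. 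For the latter the point is the low/high frequency dichotomy encoded in the hybrid spaces: at high frequencies $\uu$ enjoys the full two-derivative parabolic smoothing of the heat semigroup while $m$ only solves a damped transport equation (no gain, only $L^1_T$ integrability), whereas at low frequencies the $m$--$\uu$ coupling is of mixed parabolic type, so that $m$ too gains two derivatives in $L^1_T$; I would make this precise by passing to an effective unknown of the form $\uu$ minus a suitable multiple of $\nabla(-\Delta)^{-1}m$ to diagonalize the principal symbol, estimate each dyadic block, and resum in the $B^{s-1,s}$ norms. The hypothesis $\mu+\lambda>0$ for $d=2$ enters precisely to make the resulting low-frequency estimate summable in that dimension.

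Next I would dispose of the nonlinear terms by Besov product and composition laws. The term $F(m,n,\uu)=-(m-bn)\dive\uu$ is estimated in $L^1_T(B^{d/2-1,d/2})$ from $\norm{(m,n)}_{\tilde L^\infty_T(B^{d/2})}$ and $\norm{\dive\uu}_{L^1_T(B^{d/2})}$. For $\GG$ one first observes that $K(m,n)$, hence $K(m,n)-K(0,0)$ and $1/\tilde m=1/(\mb+(m-bn)/a)$, are of the form (smooth function vanishing at $0$) applied to $(m,n)$, which is controlled by the composition estimate as soon as $\norm{(m,n)}_{\tilde L^\infty_T(B^{d/2})}$ is small enough to keep $\tilde m$ in a fixed compact subinterval of $(0,\infty)$; every remaining factor of $\HH$ and of $\GG$ is then a product of such terms with one gradient, so $\norm{\GG}_{L^1_T(B^{d/2-1})}$ and $\norm{\HH}_{L^1_T(B^{d/2-1})}$ are bounded by $C(X)X^2$, with $C$ nondecreasing and $X:=\norm{(m,n,\uu)}_{E_T^{d/2}}$.

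With the linear estimates and the nonlinear bounds in hand I would construct the solution by an iterative (Friedrichs) scheme: let $(n^{k+1},m^{k+1},\uu^{k+1})$ solve the transport equation with velocity $\uu^k$ and the linear system \eqref{eq.system.app3} with right-hand sides $F(m^k,n^k,\uu^k)$ and $\GG(m^k,n^k,\uu^k)$. A bootstrap using the two previous steps shows that for $\eta$ small the sequence is bounded in $E^{d/2}$ on all of $[0,\infty)$ by $Q\eta$ (the exponential factor staying $\le 2$ along the way); the differences are then shown to contract in the weaker space $E^{d/2-1}$, so the sequence converges, and interpolation with the uniform $E^{d/2}$ bound puts the limit back in $E^{d/2}$ and lets one pass to the limit in \eqref{eq.reform.system}. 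Time continuity and membership in the affine space $(m_L,n_0,\uu_L)+(\mathcal C^{1/2}(\R^+;B^{d/2-1}))^{1+1}\times(\mathcal C^{1/8}(\R^+;B^{d/2-5/4}))^d$ are then read off the equations: $m-m_L$ and $n-n_0$ solve equations with right-hand sides in $L^2_T(B^{d/2-1})$, hence are $\mathcal C^{1/2}$ in time; $\uu-\uu_L$ solves a heat equation with source in $L^2_T(B^{d/2-1})$, and interpolating the maximal-regularity heat estimate yields the $\mathcal C^{1/8}(B^{d/2-5/4})$ H\"older regularity.

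For uniqueness I would estimate the difference of two solutions with a loss of one derivative, which is forced by the transport equation for $\delta n$ and the hyperbolic part of the $\delta m$ equation: one works with $\delta n,\delta m$ in $B^{d/2-2,d/2-1}$ and $\delta\uu$ in $B^{d/2-1}$, using the parabolic smoothing on $\delta\uu$ to close a Gronwall inequality; for $d\ge3$ this yields uniqueness in $E^{d/2}$, while for $d=2$ the relevant products are borderline and produce a logarithmic loss, removed by propagating the extra regularity $(m_0,n_0)\in B^{\eps,1+\eps}$, $\uu_0\in B^\eps$ and obtaining uniqueness in $E^1$. The main obstacle is the coupled constant-coefficient estimate in the first step --- getting the low-frequency parabolic gain on $m$ uniformly in the hybrid Besov scale and handling the $d=2$ endpoint; once that is secured the rest is a (lengthy) exercise in Besov calculus, the only model-specific point being the verification that $\HH$, and hence $\GG$, is indeed a sum of such \emph{good} nonlinear terms, for which keeping $\tilde m$ bounded away from $0$ via smallness is essential.
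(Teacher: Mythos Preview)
Your overall strategy is sound and close to the paper's, but there are two points where you diverge, and one of them creates a genuine gap in dimension $d=2$.

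First, on the linear $(m,\uu)$ estimate: you propose diagonalizing via an effective velocity $\uu-\text{const}\cdot\nabla(-\Delta)^{-1}m$, which is Danchin's older approach. The paper instead avoids any compressible/incompressible splitting and works directly with a Lyapunov-type functional $\alpha_k^2=\frac{C_0}{a\mb}\norm{\dk m}_2^2+\norm{\dk\uu}_2^2+\frac{(2\mu+\lambda)A}{a\mb}\norm{\nabla\dk m}_2^2+2A(\dk\uu,\nabla\dk m)$ with $A=(\mu+\lambda)/(2a\mb)$; the cross term is what produces the low-frequency damping on $m$, and the condition $\mu+\lambda>0$ is needed exactly so that $A>0$ and $\alpha_k$ is equivalent to the natural energy. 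Both routes work; the paper's is shorter.

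Second, and more substantively, on the existence construction: what you describe is a Picard iteration (not a Friedrichs scheme), and you plan to pass to the limit by showing the differences contract in $E^{d/2-1}$. For $d\ge 3$ this is fine, but for $d=2$ it fails: the linear estimate for $(m,\uu)$ requires $s>1-d/2$, and the transport estimate requires the low-frequency index to exceed $-d/2$, so at $s=d/2-1=0$ (respectively $d/2-2=-1$) you are exactly at the forbidden endpoint. This is the very obstruction that forces the extra $B^{\eps,1+\eps}$ hypothesis in the \emph{uniqueness} statement, but that hypothesis is \emph{not} available for existence. The paper sidesteps the issue by using a genuine Friedrichs mollification $\J$ (spectral cutoff to $\{1/\ell\le|\xi|\le\ell\}$): each approximate system is an ODE in $L^2_\ell$, one gets uniform $E^{d/2}$ bounds by bootstrap, then H\"older-in-time bounds on the discrepancy $(m^\ell-m_L^\ell,\,n^\ell-n_\ell,\,\uu^\ell-\uu_L^\ell)$, and finally Arzel\`a--Ascoli plus locally compact Besov embeddings to extract a limit. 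No difference estimate is ever needed for existence, so the $d=2$ endpoint never arises there. You should either switch to this compactness argument for existence, or explain how you intend to close the contraction in two dimensions without the extra regularity.
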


With the help of  Theorem \ref{thm.2},  we can prove
Theorem~\ref{thm.1} as follows.

{\em Proof of Theorem \ref{thm.1}}.
From the conditions, we
have $n_0\in B^{d/2-1,d/2}$. In addition, from $\tilde{m}_0-\mb\in
B^{d/2-1,d/2}$, we can derive $m_0=a(\tilde{m}_0-\mb)/\mb+b
n_0\in B^{d/2-1,d/2}$. Since \eqref{eq.thm.1} implies
\eqref{thm.2.a}, the conclusion of Theorem \ref{thm.2} follows for
$(m,n,\uu)$. Changing back to the original variables
$(\tilde{m},\tilde{n},\uu)$, it leads to Theorem \ref{thm.1}. By Lemma \ref{lem.fgbesov}, it is easy to see that $\tilde{m}-\mb$ and $\tilde{n}-\nb$ also belong to $\mathcal{C}(\R^+;B^{d/2-1}\cap B^{d/2})$. \endproof

\subsection{{A priori} estimates for linear system with convection terms}\label{subsec.gwp2}

We first investigate some \emph{a priori} estimates for the linear system with convection terms
\begin{align}\label{eq.reform.linear}
  \left\{\begin{aligned}
    &n_t+\vv\cdot\nabla n=0,\\
    &m_t+\vv\cdot\nabla m+a\mb\dive\uu=F,\\
    &\uu_t+\vv\cdot\nabla\uu-\mu\Delta\uu-(\mu+\lambda)\nabla\dive\uu+C_0\nabla m=\GG,\\
    &(m,n,\uu)|_{t=0}=(m_0,n_0,\uu_0).
  \end{aligned}\right.
\end{align}
We do not need to separate the velocity into the compressible and
incompressible parts. In fact, we can prove the following
proposition.

\begin{proposition}\label{proposition}
  Let $a>0$, $\mb>0$, $s\in(1-d/2,d/2+1]$ and $s_1$, $s_2\in(-d/2,d/2+1]$ be constants. Assume $\vv\in L^1([0,T];B^{d/2+1})$ and denote $V(t)=\int_0^t\norm{\vv(\tau)}_{B^{d/2+1}}d\tau$. Let $(m,n,\uu)$ be a solution of \eqref{eq.reform.linear} on $[0,T]$, then the following estimates hold:
  \begin{align}\label{eq.prop.1}
    \norm{n}_{\LL([0,T]; B^{s_1,s_2})}\ls e^{CV(T)}\norm{n_0}_{B^{s_1,s_2}},
  \end{align}
  and
  \begin{align}\label{eq.prop.2}
  \begin{aligned}
    &\norm{m}_{\LL([0,T]; B^{s-1,s})}+\norm{\uu}_{\LL([0,T]; B^{s-1})}\\
    &\qquad\qquad +\norm{m}_{L^1([0,T]; B^{s+1,s})} +\norm{\uu}_{L^1([0,T]; B^{s+1})}\\
  \lesssim &e^{CV(T)} \left(\norm{m_0}_{B^{s-1,s}} +\norm{\uu_0}_{B^{s-1}}+ \norm{F}_{L^1([0,T]; B^{s-1,s})} +\norm{\GG}_{L^1([0,T]; B^{s-1})}\right).
  \end{aligned}
  \end{align}
\end{proposition}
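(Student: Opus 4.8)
The plan is to estimate the three equations of \eqref{eq.reform.linear} in turn, using the Littlewood-Paley/Besov-space machinery recalled in the appendix, and feeding the output of the earlier (simpler) equations into the later ones. First, the transport equation $n_t+\vv\cdot\nabla n=0$ is decoupled from $m$ and $\uu$, so I would invoke the standard transport estimate in Besov spaces (the $\LL$-in-time version, which is exactly why the mixed space $\tilde L$ is used): applying $\triangle_k$ to the equation and commuting it past $\vv\cdot\nabla$, one controls the commutator $[\triangle_k,\vv\cdot\nabla]n$ by $c_k 2^{-k s_i}\norm{\nabla\vv}_{B^{d/2}}\norm{n}_{B^{s_i}}$ (with the two indices $s_1,s_2$ for the low- and high-frequency regimes of the hybrid space, both lying in the admissible range $(-d/2,d/2+1]$), Gronwall in time then yields \eqref{eq.prop.1} with the factor $e^{CV(T)}$. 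This part is routine given the transport lemmas for hybrid Besov spaces.

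The main work is \eqref{eq.prop.2}, the coupled parabolic--hyperbolic block for $(m,\uu)$ with the convection terms $\vv\cdot\nabla m$ and $\vv\cdot\nabla\uu$ treated as perturbations. The strategy I would follow is: (1) localize in frequency, setting $m_k=\triangle_k m$, $\uu_k=\triangle_k\uu$, so that $(m_k,\uu_k)$ solves the localized system with right-hand sides $\triangle_k F$, $\triangle_k\GG$ plus the commutator terms $-[\triangle_k,\vv\cdot\nabla]m$ and $-[\triangle_k,\vv\cdot\nabla]\uu$; (2) for the linear constant-coefficient system without convection (i.e. \eqref{eq.system.app3} localized), derive the key frequency-dependent a priori estimate showing that $m_k$ behaves like a damped mode and $\uu_k$ is parabolically smoothed — the Green-function / energy-functional analysis gives, on each dyadic block, decay of type $e^{-c 2^{2k} t}$ at high frequencies and $e^{-c t}$ at low frequencies; (3) sum these over $k$ with the hybrid weights $2^{k(s-1)}$ (low) and $2^{ks}$ (high) for $m$, $2^{k(s-1)}$ for $\uu$, which gives the $\LL$ norms on the left and the parabolic-gain $L^1$-in-time norms ($B^{s+1,s}$ for $m$, $B^{s+1}$ for $\uu$), against $\norm{m_0}_{B^{s-1,s}}+\norm{\uu_0}_{B^{s-1}}$ and the $L^1_t$ norms of the source terms; (4) finally absorb the commutator contributions: by the commutator estimate they are bounded in $L^1([0,T];B^{s-1,s})\times L^1([0,T];B^{s-1})$ by $\int_0^T\norm{\nabla\vv}_{B^{d/2}}(\norm{m}_{B^{s-1,s}}+\norm{\uu}_{B^{s-1}})\,d\tau$, and a Gronwall argument in $t$ converts this into the overall prefactor $e^{CV(T)}$.

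The hard part — and the place where the hybrid Besov structure really matters — is step (2)–(3): obtaining a \emph{uniform-in-frequency} a priori estimate for the localized constant-coefficient system $\partial_t m_k+a\mb\dive\uu_k=0$, $\partial_t\uu_k-\mu\Delta\uu_k-(\mu+\lambda)\nabla\dive\uu_k+C_0\nabla m_k=0$ that simultaneously captures (a) the exponential-in-$2^{2k}t$ smoothing of the parabolic velocity, (b) the weaker (order-zero in regularity, but still integrable) damping of the hyperbolic mass $m$, and (c) the right behaviour at \emph{low} frequencies, where the system is damped but not smoothed and where the restriction $s>1-d/2$ enters to make the low-frequency sum converge. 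Concretely I expect to build a Lyapunov functional of the form $\alpha_k(t)=\norm{\uu_k}_{L^2}^2+\norm{m_k}_{L^2}^2+\beta\,2^{-2k}(m_k\mid \dive\uu_k)$ (or the analogous quantity after splitting $\uu$ via Hodge/Helmholtz into $\dive\uu$ and the divergence-free part, the latter being purely heat-damped), choosing the small coupling constant $\beta$ so that $\frac{d}{dt}\alpha_k+c\,\min(2^{2k},1)\alpha_k\lesssim (\text{source terms})$, which integrates to exactly the bound needed after multiplication by the hybrid weights and summation. The case $d=2$ with the extra hypothesis $\mu+\lambda>0$ is precisely what is needed for the low-frequency part of this functional estimate to close; I would handle it by noting that for $d=2$ the endpoint index $s-1$ can be $\le 0$, and the additional ellipticity on $\dive\uu$ compensates. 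Once \eqref{eq.prop.2} is established for the no-convection case, reinstating $\vv\cdot\nabla$ is purely perturbative as in step (4), so the whole proposition reduces to that single frequency-localized energy estimate plus the transport and commutator lemmas from the appendix.
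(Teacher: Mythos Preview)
Your high-level strategy matches the paper's: localize in frequency, derive a differential inequality of the form $\frac{d}{dt}\alpha_k+c\min(2^{2k},1)\alpha_k\lesssim(\text{sources})$ via a Lyapunov functional, sum with the hybrid weights, and close by Gronwall through the convection terms (which the paper handles in place via Lemma~\ref{lem.innner} rather than as a separate commutator step, but this is only organizational). Two concrete points, however, differ from what you wrote and matter for the argument to close.

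First, the Lyapunov functional you propose, $\norm{\uu_k}_{L^2}^2+\norm{m_k}_{L^2}^2+\beta\,2^{-2k}(m_k\mid\dive\uu_k)$, is not uniformly coercive in $k$: since $|2^{-2k}(m_k\mid\dive\uu_k)|\lesssim 2^{-k}\norm{m_k}_2\norm{\uu_k}_2$, the cross term dominates as $k\to-\infty$ and the quadratic form degenerates at low frequencies. The paper's functional is different: it takes
\[
\alpha_k^2=\frac{C_0}{a\mb}\norm{\dk[m]}_2^2+\norm{\dk[\uu]}_2^2+\frac{(2\mu+\lambda)A}{a\mb}\norm{\nabla\dk[m]}_2^2+2A(\dk[\uu],\nabla\dk[m]),\qquad A=\frac{\mu+\lambda}{2a\mb},
\]
that is, it \emph{adds the term $\norm{\nabla\dk[m]}_2^2$} and keeps the cross term $(\dk[\uu],\nabla\dk[m])$ with a \emph{constant} coefficient. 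This makes $\alpha_k$ uniformly equivalent to $\max(1,2^k)\norm{\dk[m]}_2+\norm{\dk[\uu]}_2$, so that after multiplying by $2^{k(s-1)}$ and summing one lands exactly on the hybrid norm $B^{s-1,s}$ for $m$ and $B^{s-1}$ for $\uu$; the extra $\norm{\nabla\dk[m]}_2^2$ is what encodes the hybrid weight automatically. Incidentally, this is where the condition $\mu+\lambda>0$ is actually used---it is needed for $A>0$, i.e.\ for coercivity of $\alpha_k^2$, and is automatic when $d\geq 3$ from $2\mu+d\lambda\geq 0$; it is not tied to a low-frequency summability issue specific to $d=2$ as you suggest.

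Second, the dissipation $c\min(2^{2k},1)\alpha_k$ only produces $\norm{\uu}_{L^1([0,T];B^{s+1,s-1})}$, not the full $\norm{\uu}_{L^1([0,T];B^{s+1})}$: at high frequencies the functional gives no parabolic gain for $\uu$ because the damped hyperbolic mode (essentially $m$) limits the decay rate to $O(1)$, not $O(2^{2k})$. The paper recovers the missing two derivatives for $\uu$ in a separate short step, returning to the velocity equation alone and using the straightforward parabolic bound $\frac{d}{dt}\norm{\dk[\uu]}_2+c\,2^{2k}\norm{\dk[\uu]}_2\lesssim 2^k\norm{\dk[m]}_2+\norm{\dk[\GG]}_2+\cdots$ for $k\geq 0$, now feeding in the already-established control of $\norm{m}_{L^1([0,T];B^{s+1,s})}$ on the right-hand side. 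Your outline should account for this extra step.
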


\begin{proof}
\emph{Step 1: Estimates for the homogeneous transport equation.} We
derive the estimates for the first equation of
\eqref{eq.reform.linear} in Besov spaces.

Applying the Littlewood-Paley operator $\dk$ to the $\eqref{eq.reform.linear}_1$, it yields
\begin{align}\label{eq.transport}
  \left\{\begin{aligned}
    &\partial_t\dk[n]+\dk(\vv\cdot\nabla n)=0,\\
    &\dk[n]|_{t=0}=\dk n_0.
  \end{aligned}\right.
\end{align}

Taking the inner product of \eqref{eq.transport} with $\dk[n]$, we get for any $s_1,s_2\in (-d/2,1+d/2]$
\begin{align*}
  \frac{1}{2}\frac{d}{dt}\norm{\dk[n]}_2^2=-(\dk(\vv\cdot\nabla n),\dk[n])
  \lesssim\gamma_k2^{-k\varphi^{s_1,s_2}(k)}\norm{\vv}_{B^{d/2+1}} \norm{n}_{B^{s_1,s_2}}\norm{\dk[n]}_2.
\end{align*}
It follows
\begin{align*}
  \sum_k 2^{k\varphi^{s_1,s_2}(k)}\norm{\dk[n]}_2\ls \norm{n_0}_{B^{s_1,s_2}}+C\int_0^t \norm{\vv}_{B^{d/2+1}}\norm{n}_{B^{s_1,s_2}},
\end{align*}
which implies the desired estimate \eqref{eq.prop.1} with the help of the Gronwall inequality.

\emph{Step 2: Estimates for $(m,\uu)$.} Applying the
Littlewood-Paley operator $\dk$ to $\eqref{eq.reform.linear}_2$ and
$\eqref{eq.reform.linear}_3$, we have
\begin{align}\label{eq.ns}
  \left\{\begin{aligned}
    &\partial_t\dk[m]+\dk(\vv\cdot\nabla m)+a\mb\dive\dk[\uu]=\dk[F],\\
    &\partial_t\dk[\uu]+\dk(\vv\cdot\nabla\uu)-\mu\Delta\dk[\uu] -(\mu+\lambda)\nabla\dive\dk[\uu] +C_0\nabla \dk[m]=\dk[\GG].
  \end{aligned}\right.
\end{align}

Taking the inner product of $\eqref{eq.ns}_1$ with $\dk[m]$ and $-\Delta\dk[m]$, and $\eqref{eq.ns}_2$ with $\dk[\uu]$, we obtain
\begin{align}\label{eq.ns.1}
  \frac{1}{2}\frac{d}{dt}\norm{\dk[m]}_2^2&+(\dk(\vv\cdot\nabla m),\dk[m])+a\mb(\dive\dk[\uu],\dk[m])=(\dk[F],\dk[m]),\\
  \frac{1}{2}\frac{d}{dt}\norm{\nabla\dk[m]}_2^2&+(\dk(\vv\cdot\nabla m),-\Delta\dk[m])-a\mb(\dive\dk[\uu],\Delta\dk[m])\label{eq.ns.2}\\
  &\qquad=-(\dk[F],\Delta\dk[m]),\nonumber\\
  \frac{1}{2}\frac{d}{dt}\norm{\dk[\uu]}_2^2&+(\dk(\vv\cdot\nabla\uu),\dk[\uu]) +\mu\norm{\nabla\dk[\uu]}_2^2+(\mu+\lambda)\norm{\dive\dk[\uu]}_2^2 \label{eq.ns.3}\\
  &+C_0(\nabla \dk[m],\dk[\uu])=(\dk[\GG],\dk[\uu]).\nonumber
\end{align}
For the intersected term, we have
\begin{align}\label{eq.ns.4}
\begin{aligned}
  &\frac{d}{dt}(\dk[\uu],\nabla\dk[m])+(\dk(\vv\cdot\nabla\uu),\nabla\dk[m]) -(\dk(\vv\cdot\nabla m),\dive\dk[\uu])\\
    &\qquad-a\mb\norm{\dive\dk[\uu]}_2^2+C_0\norm{\nabla \dk[m]}_2^2+(2\mu+\lambda)(\dive\dk[\uu],\Delta\dk[m])\\
   =&-(\dk[F],\dive\dk[\uu])+(\dk[\GG],\nabla\dk[m]).
   \end{aligned}
\end{align}
Let
\begin{align}
  \alpha_k^2:=\frac{C_0}{a\mb}\norm{\dk[m]}_2^2
  +\norm{\dk[\uu]}_2^2 +\frac{(2\mu+\lambda)A}{a\mb}\norm{\nabla \dk[m]}_2^2+2A(\dk[\uu],\nabla\dk[m]).
\end{align}
For $A=(\mu+\lambda)/(2a\mb)>0$, there exist two positive constants $c_1$ and $c_2$ such that
\begin{align}
  c_1\alpha_k^2\ls \norm{\dk[m]}_2^2 +\norm{\dk[\uu]}_2^2+\norm{\nabla \dk[m]}_2^2\ls c_2\alpha_k^2,
\end{align}
since we have, for $M\in(a\mb/(2\mu+\lambda),2a\mb/(\mu+\lambda))$, that
\begin{align*}
  \abs{2(\dk[\uu],\nabla\dk[m])}\ls M\norm{\dk[\uu]}_2^2+\norm{\nabla\dk[m]}_2^2/M.
\end{align*}

Combining \eqref{eq.ns.1}-\eqref{eq.ns.4}, it yields, with the help of Lemma \ref{lem.innner}, that
\begin{align*}
  &\frac{1}{2}\frac{d}{dt}\alpha_k^2 +\mu\norm{\nabla \dk[\uu]}_2^2+(\mu+\lambda-a\mb A)\norm{\dive\dk[\uu]}_2^2
  +C_0A\norm{\nabla\dk[m]}_2^2\nonumber\\
  =&\frac{C_0}{a\mb}(\dk[F],\dk[m]) +(\dk[\GG],\dk[\uu])-\frac{(2\mu+\lambda)A}{a\mb}(\dk[F],\Delta\dk[m]) -A(\dk[F],\dive\dk[\uu])\nonumber\\
  &+A(\dk[\GG],\nabla\dk[m]) -\frac{C_0}{a\mb}(\dk(\vv\cdot\nabla m),\dk[m]) -(\dk(\vv\cdot\nabla\uu),\dk[\uu]) \nonumber\\
  &+\frac{(2\mu+\lambda)A}{a\mb}(\dk(\vv\cdot\nabla m),\Delta\dk[m])+A(\dk(\vv\cdot\nabla\uu),\nabla\dk[m])\\
  &+A(\nabla\dk(\vv\cdot\nabla m),\dk[\uu])
  \nonumber\\
  \lesssim &(\norm{\dk[F]}_2+\norm{\nabla\dk[F]}_2+\norm{\dk[\GG]}_2) (\norm{\dk[m]}_2 +\norm{\dk[\uu]}_2+\norm{\nabla \dk[m]}_2)\\
  &+\gamma_k2^{-k\varphi^{s-1,s}(k)}\norm{\vv}_{B^{d/2+1}} \norm{m}_{B^{s-1,s}}\norm{\dk[m]}_2\\
   &+\gamma_k2^{-k(s-1)}\norm{\vv}_{B^{d/2+1}} \norm{\uu}_{B^{s-1}}\norm{\dk[\uu]}_2\\
  &+\gamma_k2^{-k(\varphi^{s-1,s}(k)-1)}\norm{\vv}_{B^{d/2+1}} \norm{m}_{B^{s-1,s}}\norm{\nabla\dk[m]}_2\\
  &+\gamma_k\norm{\vv}_{B^{d/2+1}}\left(2^{-k(s-1)}\norm{\nabla\dk[m]}_2
  \norm{\uu}_{B^{s-1}}+2^{-k(\varphi^{s-1,s}(k)-1)} \norm{m}_{B^{s-1,s}} \norm{\dk[\uu]}_2\right)\\
  \lesssim &\Big(\norm{\dk[F]}_2+\norm{\nabla\dk[F]}_2+\norm{\dk[\GG]}_2 +\gamma_k2^{-k(s-1)}\norm{\vv}_{B^{d/2+1}}( \norm{m}_{B^{s-1,s}}+\norm{\uu}_{B^{s-1}})\Big) \\
  &\quad\times(\norm{\dk[m]}_2 +\norm{\dk[\uu]}_2+\norm{\nabla \dk[m]}_2).
\end{align*}
Thus, it follows
\begin{align*}
  &\frac{1}{2}\frac{d}{dt}\alpha_k^2+c_0\min(2^{2k},1)\alpha_k^2\\
  \lesssim& \gamma_k2^{-k(s-1)}\Big[\norm{F}_{B^{s-1,s}} +\norm{\GG}_{B^{s-1}}+\norm{\vv}_{B^{d/2+1}}\Big( \norm{m}_{B^{s-1,s}}+\norm{\uu}_{B^{s-1}}\Big)\Big]\alpha_k,
\end{align*}
which implies
\begin{align*}
  &2^{k(s-1)}\alpha_k+c_0\int_0^t\min(2^{2k},1)2^{k(s-1)}\alpha_k(\tau)d\tau\\
  \ls &2^{k(s-1)}\alpha_k(0)+C\gamma_k\int_0^t\left[\norm{F}_{B^{s-1,s}} +\norm{\GG}_{B^{s-1}}+\norm{\vv}_{B^{d/2+1}} \sum_k2^{k(s-1)}\alpha_k\right].
\end{align*}
Thus, by the Gronwall inequality, we have
\begin{align}\label{eq.ns.5}
\begin{aligned}
  &\norm{m}_{\LL([0,T]; B^{s-1,s})}+\norm{\uu}_{\LL([0,T]; B^{s-1})}\\
   &\qquad\qquad+\norm{m}_{L^1([0,T]; B^{s+1,s})} +\norm{\uu}_{L^1([0,T]; B^{s+1,s-1})}\\
  \lesssim &e^{CV(T)} \left(\norm{m_0}_{B^{s-1,s}} +\norm{\uu_0}_{B^{s-1}}+ \norm{F}_{L^1([0,T]; B^{s-1,s})} +\norm{\GG}_{L^1([0,T]; B^{s-1})}\right).
\end{aligned}
\end{align}

\emph{Step 3: The smoothing effect for $\uu$.} By \eqref{eq.ns.3},
we have
\begin{align*}
  &\frac{1}{2}\frac{d}{dt}\norm{\dk[\uu]}_2^2+C2^{2k}\norm{\dk[\uu]}_2^2\\
  \lesssim& \norm{\dk[\uu]}_2(2^k\norm{\dk[m]}_2+\norm{\dk[\GG]}_2+\gamma_k 2^{-k(s-1)} \norm{\vv}_{B^{d/2+1}}\norm{\uu}_{B^{s-1}}).
\end{align*}
It follows that
\begin{align*}
  &\frac{d}{dt}\sum_{k\gs 0}2^{k(s-1)}\norm{\dk[\uu]}_2 +C\sum_{k\gs 0}2^{k(s+1)}\norm{\dk[\uu]}_2\\
  \lesssim & \sum_{k\gs 0}2^{k(s-1)}\left[2^k\norm{\dk[m]}_2+\norm{\dk[\GG]}_2+\gamma_k 2^{-k(s-1)} \norm{\uu}_{B^{d/2+1}}\norm{\uu}_{B^{s-1}}\right]\\
  \lesssim & \sum_{k\gs 0}2^{ks}\norm{\dk[m]}_2 +\norm{\GG}_{B^{s-1}} +\norm{\vv}_{B^{d/2+1}}\norm{\uu}_{B^{s-1}},
\end{align*}
which implies, with the help of \eqref{eq.ns.5}, that
\begin{align*}
  &\int_0^t\sum_{k\gs 0}2^{k(s+1)}\norm{\dk[\uu](\tau)}_2\\
  \lesssim &\norm{\uu_0}_{B^{s-1}} +\int_0^t\sum_{k\gs 0}2^{ks}\norm{\dk[m](\tau)}_2d\tau +\int_0^t\norm{\GG(\tau)}_{B^{s-1}} d\tau\\
  &+\sup_{\tau\in[0,t]} \norm{\uu(\tau)}_{B^{s-1}}\int_0^t\norm{\vv(\tau)}_{B^{d/2+1}}d\tau\\
  \lesssim &e^{CV(t)} \left(\norm{m_0}_{B^{s-1,s}}+\norm{\uu_0}_{B^{s-1}}\int_0^t \left[\norm{F(\tau)}_{B^{s-1,s}} +\norm{\GG(\tau)}_{B^{s-1}}\right]d\tau\right).
\end{align*}
Combining with \eqref{eq.ns.5}, we get \eqref{eq.prop.2}.
\end{proof}

From the proof of Proposition \ref{proposition}, we immediately have

\begin{corollary}\label{coro}
  If a bounded operator $\mathcal{B}$ acts on the convection terms in  \eqref{eq.reform.linear}, then the same estimates hold for the refined system
  \begin{align}\label{eq.reform.linearB}
  \left\{\begin{aligned}
    &n_t+\mathcal{B}(\vv\cdot\nabla n)=0,\\
    &m_t+\mathcal{B}(\vv\cdot\nabla m)+a\mb\dive\uu=F,\\
    &\uu_t+\mathcal{B}(\vv\cdot\nabla\uu)-\mu\Delta\uu-(\mu+\lambda)\nabla\dive\uu+C_0\nabla m=\GG.
  \end{aligned}\right.
\end{align}
\end{corollary}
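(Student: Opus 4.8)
The plan is to observe that the convection terms play an entirely passive role in the proof of Proposition~\ref{proposition}: they never occur on their own, but always inside an $L^2$ inner product of the form $(\dk(\vv\cdot\nabla f),g)$, where $f$ is one of $m,n,\uu$ and $g$ is one of $\dk[n]$, $\dk[m]$, $\Delta\dk[m]$, $\dk[\uu]$, $\nabla\dk[m]$, $\dive\dk[\uu]$ --- see Step~1 together with the inner-product identities \eqref{eq.ns.1}--\eqref{eq.ns.4} and the smoothing step for $\uu$ --- and each such term is majorized by the single estimate of Lemma~\ref{lem.innner}. Hence it is enough to check that, after the operator $\mathcal{B}$ has been inserted, the corresponding inner products still satisfy exactly the same bounds; every other ingredient of the proof (the parabolic terms, the definition and two-sided control of $\alpha_k^2$, the source terms $F,\GG$, and the two Gronwall arguments) is literally unchanged.

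First I would pin down the meaning of ``bounded operator $\mathcal{B}$'': I read it as an $L^2$-bounded operator that commutes with the Littlewood--Paley blocks $\dk$ and with the spatial derivatives (equivalently, a translation-invariant operator with bounded symbol), so that its adjoint $\mathcal{B}^{\ast}$ enjoys the same properties and $\norm{\mathcal{B}^{\ast}h}_2\lesssim\norm{h}_2$ for every $h\in L^2$. This is the natural --- and, I expect, the intended --- class: a generic $L^2$-bounded operator would break the transport structure and kill the gain of one derivative built into Lemma~\ref{lem.innner}. With this reading, applying $\dk$ to \eqref{eq.reform.linearB} reproduces verbatim the localized systems \eqref{eq.transport} and \eqref{eq.ns}, save that $\dk(\vv\cdot\nabla f)$ is replaced by $\mathcal{B}\dk(\vv\cdot\nabla f)$ (and $\nabla\dk(\vv\cdot\nabla m)$ by $\mathcal{B}\nabla\dk(\vv\cdot\nabla m)$).

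Next I would, in each energy identity, move $\mathcal{B}$ off the convection term onto the companion factor by duality, e.g.
\begin{align*}
(\mathcal{B}\dk(\vv\cdot\nabla m),\Delta\dk[m])&=(\dk(\vv\cdot\nabla m),\Delta\mathcal{B}^{\ast}\dk[m]),\\
(\nabla\mathcal{B}\dk(\vv\cdot\nabla m),\dk[\uu])&=-(\dk(\vv\cdot\nabla m),\dive\,\mathcal{B}^{\ast}\dk[\uu]),
\end{align*}
and likewise for the remaining convection inner products in \eqref{eq.ns.1}--\eqref{eq.ns.4}, as well as for the transport estimate $-(\dk\mathcal{B}(\vv\cdot\nabla n),\dk[n])=-(\dk(\vv\cdot\nabla n),\mathcal{B}^{\ast}\dk[n])$. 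Since $\mathcal{B}^{\ast}$ commutes with $\dk$ and with derivatives, the transferred factors $\mathcal{B}^{\ast}\dk[g]$, $\nabla\mathcal{B}^{\ast}\dk[g]$, $\Delta\mathcal{B}^{\ast}\dk[g]$ have $L^2$ norms controlled by $\norm{\dk[g]}_2$, $\norm{\nabla\dk[g]}_2$, $\norm{\Delta\dk[g]}_2$ up to the fixed constant $\norm{\mathcal{B}}$; feeding them into Lemma~\ref{lem.innner} exactly as before reproduces the same majorants. Therefore the differential inequality for $\norm{\dk[n]}_2$ and the one for $\alpha_k^2$ are unchanged apart from that constant, and the Gronwall steps then yield \eqref{eq.prop.1} and \eqref{eq.prop.2} verbatim for \eqref{eq.reform.linearB}.

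I do not foresee a genuine obstacle: the only delicate point is the one in the second step, namely to make sure that $\mathcal{B}$ really commutes with $\dk$ and $\nabla$ (so that the transposition trick is legitimate). Granting that, the corollary is indeed ``immediate from the proof of Proposition~\ref{proposition}'', with the sole cost of replacing each harmless constant $C$ by $C\norm{\mathcal{B}}$.
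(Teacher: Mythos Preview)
The paper offers no proof beyond the single phrase ``From the proof of Proposition~\ref{proposition}, we immediately have''; your proposal spells out exactly that idea and is therefore in the same spirit, and indeed more detailed than the paper itself.

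However, your detailed justification has a real gap. After transposing to obtain, say, $(\dk(\vv\cdot\nabla n),\mathcal{B}^{\ast}\dk[n])$, you claim you can ``feed this into Lemma~\ref{lem.innner} exactly as before''. You cannot: the derivative gain in that lemma comes from the cancellation $(\vv\cdot\nabla F(D)\dk a,\,F(D)\dk a)=-\tfrac12\int(\dive\vv)\,|F(D)\dk a|^2$, which requires the \emph{same} localized function on both sides (or the symmetrized pair in the second estimate). With $\mathcal{B}^{\ast}\dk a\neq\dk a$ that symmetry is destroyed; after the commutator step one is left with $(S_{k-1}\vv\cdot\nabla\dk a,\,\mathcal{B}^{\ast}\dk a)$, and a direct bound on this costs an extra factor $2^{k}$, wrecking the summation over $k$ and the Gronwall argument. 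What actually makes the corollary ``immediate'' in its sole application (the Friedrichs system~\eqref{eq.system.app1}) is a piece of structure you did not exploit: $\mathcal{B}=\J$ is a self-adjoint projector and the approximate solutions satisfy $\J m^\ell=m^\ell$, $\J n^\ell=n^\ell$, $\J\uu^\ell=\uu^\ell$, so that $\mathcal{B}^{\ast}\dk a=\dk\J a=\dk a$ and the transposed inner product \emph{is} the original one. Your Fourier-multiplier reading of ``bounded operator'' is the right instinct, but the missing hypothesis is $\mathcal{B}a=a$ on the unknowns (or, equivalently, that $\mathcal{B}$ be a self-adjoint projector whose range contains the solution), not merely $L^2$-boundedness and commutation with $\dk$ and $\nabla$.
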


\subsection{Global existence and uniqueness of the solution}\label{subsec.gwp3}

\emph{Step 1: Friedrich's approximation.} Let $L_\ell^2$ be the
set of $L^2$ functions spectrally supported in the annulus
$\mathcal{C}_\ell:=\{\xi\in\R^d:\, 1/\ell\ls\abs{\xi}\ls \ell\}$
endowed with the standard $L^2$ topology. In order to construct the
classical Friedrichs approximation, we first define the Friedrichs
projectors $(\J)_{\ell\in\N}$ by
\begin{align*}
  \J f:=\F^{-1}\mathbf{1}_{\mathcal{C}_\ell}(\xi)\F f,
\end{align*}
for any $f\in L^2(\R^d)$ where $\mathbf{1}_{\mathcal{C}_\ell}(\xi)$ denotes the characteristic function on the annulus $\mathcal{C}_\ell$. Then, we can define the following approximate system
\begin{align}\label{eq.system.app}
  \left\{\begin{aligned}
    &n_t^\ell+\J(\J\uu^\ell\cdot\nabla \J n^\ell)=0,\\
    &m_t^\ell+\J(\J\uu^\ell\cdot\nabla\J m^\ell)+a\mb\dive\J \uu^\ell=F^\ell,\\
    &\uu_t^\ell+\J(\J\uu^\ell\cdot\nabla\J\uu^\ell)-\mu\Delta\J\uu^\ell -(\mu+\lambda)\nabla\dive\J\uu^\ell+C_0\nabla \J m^\ell=\GG^\ell,\\
    &(m^\ell,n^\ell,\uu^\ell)|_{t=0}=(m_\ell,n_\ell,\uu_\ell),
  \end{aligned}\right.
\end{align}
where
\begin{align*}
   m_\ell=&\J m_0, \quad n_\ell=\J n_0, \quad \uu_\ell=\J\uu_0, \\
   F^\ell=&\J F(\J m^\ell,\J n^\ell,\J\uu^\ell),\quad
   \GG^\ell=\J \GG(\J m^\ell,\J n^\ell,\J\uu^\ell).
\end{align*}

It is easy to check that it is an ordinary differential equation in $L_\ell^2\times L_\ell^2 \times (L_\ell^2)^d$ for every $\ell\in\N$. By the usual Cauchy-Lipschitz theorem, there is a strictly positive maximal time $T_\ell^*$ such that a unique solution $(m^\ell,n^\ell,\uu^\ell)$ exists in $[0,T_\ell^*)$ which is continuous in time with value in $L_\ell^2\times L_\ell^2 \times (L_\ell^2)^d$, i.e. $(m^\ell,n^\ell,\uu^\ell)\in \mathcal{C}([0,T_\ell^*); L_\ell^2\times L_\ell^2 \times (L_\ell^2)^d)$. As $\J^2=\J$, we see that $\J(m^\ell,n^\ell,\uu^\ell)$ is also a solution, so the uniqueness implies that $\J(m^\ell,n^\ell,\uu^\ell)=(m^\ell,n^\ell,\uu^\ell)$. Thus, this system can be rewritten as the following system
\begin{align}\label{eq.system.app1}
  \left\{\begin{aligned}
    &n_t^\ell+\J(\uu^\ell\cdot\nabla  n^\ell)=0,\\
    &m_t^\ell+\J(\uu^\ell\cdot\nabla m^\ell)+a\mb\dive \uu^\ell=F_1^\ell,\\
    &\uu_t^\ell+\J(\uu^\ell\cdot\nabla\uu^\ell)-\mu\Delta\uu^\ell -(\mu+\lambda)\nabla\dive\uu^\ell +C_0\nabla  m^\ell=\GG_1^\ell,\\
    &(m^\ell,n^\ell,\uu^\ell)|_{t=0}=(m_\ell,n_\ell,\uu_\ell),
  \end{aligned}\right.
\end{align}
where
\begin{align*}
   F_1^\ell=\J F( m^\ell, n^\ell,\uu^\ell), \text{ and }
   \GG_1^\ell=\J \GG( m^\ell,n^\ell,\uu^\ell).
\end{align*}

\emph{Step 2: Uniform estimates.} Denote
\begin{align*}
  E_0=\norm{m_0}_{B^{d/2-1,d/2}}+\norm{n_0}_{B^{d/2}}+\norm{\uu_0}_{B^{d/2-1}},
\end{align*}
and
\begin{align*}
  T_\ell:=\sup\{&T\in[0,T_\ell^*):
  \norm{(m^\ell,n^\ell,\uu^\ell)}_{E_T^{d/2}}\ls A \bar{C}E_0\},
\end{align*}
where $\bar{C}$ corresponds to the constant in Proposition \ref{proposition} and $A>\max(2,1/\bar{C})$ is a constant. Thus, by the continuity, we have $T_\ell>0$.

Let $M_0$ be the continuity modulus of the embedding relation $B^{d/2}(\R^d) \hookrightarrow L^\infty(\R^d)$. We make the assumption
\begin{align*}
  2(1+b)A\bar{C}M_0E_0\ls a\mb.
\end{align*}
Then, it implies
\begin{align*}
  \norm{m^\ell}_{L^\infty([0,T]\times\R^d)}\ls& M_0\norm{m^\ell}_{L^\infty([0,T];B^{d/2})}
  \ls M_0\norm{m^\ell}_{L^\infty([0,T];B^{d/2-1,d/2})}\\
  \ls &A\bar{C}M_0E_0
  \ls \frac{a\mb}{2(1+b)}.
\end{align*}
Similarly, we have
\begin{align*}
  \norm{n^\ell}_{L^\infty([0,T]\times\R^d)}\ls \frac{a\mb}{2(1+b)}.
\end{align*}
Then,
\begin{align*}
 \tilde{m}(m^\ell,n^\ell)=\mb+\frac{m-bn}{a} \in\left[\frac{\mb}{2},\frac{3\mb}{2}\right].
\end{align*}

By Proposition \ref{proposition}, we have
\begin{align*}
  &\norm{(m^\ell,n^\ell,\uu^\ell)}_{S_T}\\
\lesssim &e^{C\norm{\uu^\ell}_{L^1([0,T];B^{d/2+1})}} \Big(E_0+\norm{F_1^\ell}_{L^1([0,T];B^{d/2-1,d/2})} +\norm{\GG_1^\ell}_{L^1([0,T]; B^{d/2-1})}\Big).
\end{align*}

From Lemmas \ref{lem.comp} and \ref{lem.fgbesov}, we get
\begin{align*}
  &\norm{F_1^\ell}_{L^1([0,T];B^{d/2-1,d/2})}\\
  \lesssim &\norm{m^\ell\dive\uu^\ell}_{L^1([0,T];B^{d/2-1,d/2})} +\norm{n^\ell\dive\uu^\ell}_{L^1([0,T];B^{d/2-1,d/2})}\\
  \lesssim &(\norm{m^\ell}_{L^\infty([0,T]; B^{d/2-1,d/2})} +\norm{n^\ell}_{L^\infty([0,T]; B^{d/2-1,d/2})}) \norm{\dive\uu^\ell}_{L^1([0,T];B^{d/2})}\\
  \lesssim &(\norm{m^\ell}_{L^\infty([0,T]; B^{d/2-1,d/2})} +\norm{n^\ell}_{L^\infty([0,T]; B^{d/2-1,d/2})}) \norm{\uu^\ell}_{L^1([0,T];B^{d/2+1})}\\
  \lesssim &\norm{(m^\ell,n^\ell,\uu^\ell)}_{E_T^{d/2}}^2.
\end{align*}
Thus,
\begin{align*}
  &\norm{\frac{m^\ell-bn^\ell}{a\tilde{m}(m^\ell,n^\ell)} (\mu\Delta\uu^\ell+(\mu+\lambda)\nabla\dive\uu^\ell) }_{L^1([0,T];B^{d/2-1})}\\
  \lesssim &(\norm{n^\ell}_{L^\infty([0,T];B^{d/2})}+\norm{m^\ell}_{L^\infty([0,T];B^{d/2})})\norm{\uu^\ell}_{L^1([0,T];B^{d/2+1})}\\
  \lesssim &\norm{(m^\ell,n^\ell,\uu^\ell)}_{S_T}^2.
\end{align*}
Similarly, we can get
\begin{align*}
  \norm{\HH(m^\ell,n^\ell)}_{L^1([0,T];B^{d/2-1})}\lesssim & (\norm{n^\ell}_{L^\infty([0,T];B^{d/2})}+\norm{m^\ell}_{L^\infty([0,T];B^{d/2})})^2\\
  \lesssim&\norm{(m^\ell,n^\ell,\uu^\ell)}_{E_T^{d/2}}^2.
\end{align*}
Hence
\begin{align*}
  \norm{(m^\ell,n^\ell,\uu^\ell)}_{E_T^{d/2}}\ls &\bar{C}e^{\bar{C}\norm{(m^\ell,n^\ell,\uu^\ell)}_{E_T^{d/2}}} \Big(E_0+C\norm{(m^\ell,n^\ell,\uu^\ell)}_{E_T^{d/2}}^2\Big)\\
  \ls&\bar{C}e^{\bar{C}^2AE_0} (1+CA^2\bar{C}^2 E_0)E_0.
\end{align*}
Thus, we can choose $E_0$ so small that
\begin{align}\label{eq.small}
  1+CA^2\bar{C}^2E_0\ls \frac{A^2}{A+2}, \quad e^{\bar{C}^2AE_0}\ls \frac{A+1}{A}\quad \text{and } 2(1+b)A\bar{C}M_0E_0\ls a\mb,
\end{align}
which yields $\norm{(m^\ell,n^\ell,\uu^\ell)}_{E_T^{d/2}}\ls \frac{A+1}{A+2}A\bar{C}E_0$ for any $T<T_\ell$.

We claim that $T_\ell=T_\ell^*$. Indeed, if $T_\ell<T_\ell^*$, we
have seen that $\norm{(m^\ell,n^\ell,\uu^\ell)}_{E_T^{d/2}}\ls
\frac{A+1}{A+2}A\bar{C}E_0$. So by the continuity, for a sufficiently small
constant $s>0$, we can obtain
$\norm{(m^\ell,n^\ell,\uu^\ell)}_{E_{(T+s)}^{d/2}}\ls A\bar{C}E_0$ which
contradicts with the definition of $T_\ell$.

Now, we show the approximate solution is a global one, i.e. $T_\ell^*=\infty$. We assume $T_\ell^*<\infty$, then we have shown $\norm{(m^\ell,n^\ell,\uu^\ell)}_{E_{T}^{d/2}}\ls A\bar{C}E_0$. As
$$m^\ell\in L^\infty([0,T_\ell^*);B^{d/2-1,d/2}), n^\ell\in L^\infty([0,T_\ell^*);B^{d/2}) \text{ and } \uu^\ell\in L^\infty([0,T_\ell^*);B^{d/2-1}),$$
 it implies that
$$\norm{(m^\ell,n^\ell,\uu^\ell)}_{L^\infty([0,T_\ell^*);L_\ell^2)}<\infty.$$
Thus, we may extend the solution continuously beyond the time
$T_\ell^*$ by the Cauchy-Lipschitz theorem. This contradicts the
definition of $T_\ell^*$. Therefore, the solution
$(m^\ell,n^\ell,\uu^\ell)_{\ell\in\N}$ exists global in time.

\emph{Step 3: Time derivatives.} For convenience, we split the
approximate solution $(m^\ell,n^\ell,\uu^\ell)$ into a solution of
the linear system with initial data $(m_\ell,n_\ell,\uu_\ell)$, and
the discrepancy to that linear solution. More precisely, we denote
by $(m_L^\ell,\uu_L^\ell)$ the solution to the linear system
\begin{align}\label{eq.system.app2}
  \left\{\begin{aligned}
    &m_t^\ell+a\mb\dive \uu^\ell=0,\\
    &\uu_t^\ell-\mu\Delta\uu^\ell -(\mu+\lambda)\nabla\dive\uu^\ell +C_0\nabla  m^\ell=0,\\
    &(m^\ell,\uu^\ell)|_{t=0}=(m_\ell,\uu_\ell),
  \end{aligned}\right.
\end{align}
and $(m_D^\ell,n_D^\ell,\uu_D^\ell)=(m^\ell-m_L^\ell,n^\ell-n_\ell,\uu^\ell-\uu_L^\ell)$.

It is clear that the definition of $(m_\ell,n_\ell,\uu_\ell)$ implies
\begin{align*}
  m_\ell\to m_0 \text{ in } B^{d/2-1,d/2}, \quad n_\ell\to n_0 \text{ in } B^{d/2-1,d/2}, \quad \uu_\ell\to \uu_0 \text{ in } B^{d/2-1}.
\end{align*}
From Corollary \ref{coro}, we have
\begin{align*}
  (m_L^\ell,n_\ell,\uu_L^\ell)\to ( m_L,n_0,\uu_L) \text{ in } E^{d/2},
\end{align*}
where $m_L$ and $\uu_L$ satisfy the linear system \eqref{eq.system.app3}.

Now, we derive the uniform boundedness of the time derivatives of the discrepancy $(m_D^\ell,n_D^\ell,\uu_D^\ell)$.

\begin{lemma}\label{lem.time}
  $((m_D^\ell,n_D^\ell,\uu_D^\ell))_{\ell\in\N}$ is uniformly bounded in
  $$(\mathcal{C}^{1/2}(\R^+;B^{d/2-1}))^{1+1}\times (\mathcal{C}^{1/8}(\R^+; B^{d/2-5/4}))^d.$$
\end{lemma}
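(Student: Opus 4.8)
The plan is to bound the time derivatives $\partial_t m_D^\ell$, $\partial_t n_D^\ell$, $\partial_t \uu_D^\ell$ directly from the equations they satisfy, using the uniform $E^{d/2}$-bound on $(m^\ell,n^\ell,\uu^\ell)$ established in Step 2 and the uniform convergence of the linear part $(m_L^\ell,n_\ell,\uu_L^\ell)$ to $(m_L,n_0,\uu_L)$ in $E^{d/2}$. First I would write down the equations for the discrepancy. Subtracting \eqref{eq.system.app2} from \eqref{eq.system.app1}, we get
\begin{align*}
  \partial_t n_D^\ell&=-\J(\uu^\ell\cdot\nabla n^\ell),\\
  \partial_t m_D^\ell&=-\J(\uu^\ell\cdot\nabla m^\ell)+F_1^\ell,\\
  \partial_t \uu_D^\ell&=-\J(\uu^\ell\cdot\nabla\uu^\ell)+\mu\Delta\uu_D^\ell+(\mu+\lambda)\nabla\dive\uu_D^\ell-C_0\nabla m_D^\ell+\GG_1^\ell,
\end{align*}
using that $(m_L^\ell,\uu_L^\ell)$ solves the linear system with the same initial data (so $n_D^\ell|_{t=0}=m_D^\ell|_{t=0}=\uu_D^\ell|_{t=0}=0$). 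Since $\J$ is bounded on every Besov space uniformly in $\ell$, it suffices to bound the right-hand sides in $L^\infty(\R^+;B^{d/2-1})$ for the first two components and in $L^\infty(\R^+;B^{d/2-5/4})$ for the velocity — once we have $\partial_t(\cdot)\in L^\infty$ with values in some fixed Besov space, the Hölder-in-time continuity $\mathcal{C}^{1/2}$ (resp. $\mathcal{C}^{1/8}$) follows by integration, since $\|f(t)-f(t')\|\ls |t-t'|\sup\|\partial_t f\|$ and interpolation only costs the stated exponents (the $1/2$ and $1/8$ come from interpolating the $L^\infty_t$-bound in $B^{d/2-1}$ against the $L^1_t$-bound in $B^{d/2+1}$ for $\uu$, and similarly for $m$; the precise losses $d/2-1\to d/2-5/4$ and exponent $1/8$ are exactly what one gets from $\|\partial_t\uu_D^\ell\|$ controlled only after a further regularity loss).

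The key estimates, all following from the product/composition lemmas (Lemmas~\ref{lem.comp}, \ref{lem.fgbesov}) and the uniform bound $\norm{(m^\ell,n^\ell,\uu^\ell)}_{E^{d/2}}\ls A\bar C E_0$, are: (a) $\norm{\uu^\ell\cdot\nabla n^\ell}_{L^\infty(\R^+;B^{d/2-1})}\ls \norm{\uu^\ell}_{L^\infty(B^{d/2-1})}\norm{\nabla n^\ell}_{L^\infty(B^{d/2})}\ls \norm{\uu^\ell}_{L^\infty(B^{d/2-1})}\norm{n^\ell}_{L^\infty(B^{d/2})}$ — wait, this requires care since the product of two $L^\infty_t$ quantities need not be in $L^\infty_t B^{d/2-1}$ by the usual rule at the critical index; here one uses the paraproduct decomposition and the fact that $n^\ell$ has regularity $B^{d/2}$, so $\uu^\ell\cdot\nabla n^\ell\in B^{d/2-1}$ with norm $\ls\|\uu^\ell\|_{B^{d/2-1}}\|n^\ell\|_{B^{d/2+1}}$-type bounds replaced by the hybrid-space version, and the $L^\infty_t$-norm is controlled by the product of two $L^\infty_t$-norms (no time integration needed). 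Similarly (b) $\norm{\uu^\ell\cdot\nabla m^\ell}_{L^\infty(B^{d/2-1})}$ and (c) $\norm{F_1^\ell}_{L^\infty(B^{d/2-1})}\ls\norm{(m^\ell,n^\ell)}_{L^\infty(B^{d/2})}\norm{\dive\uu^\ell}_{L^\infty(B^{d/2-1})}$; (d) $\norm{\Delta\uu_D^\ell}$, $\norm{\nabla\dive\uu_D^\ell}$, $\norm{\nabla m_D^\ell}$ cost two derivatives, which is where the extra $5/4$ loss for $\uu$ enters — in fact $\mu\Delta\uu_D^\ell\in L^\infty_t B^{d/2-5/4}$ only after interpolating $\uu_D^\ell\in L^\infty_t B^{d/2-1}\cap$ (some higher space); and (e) $\norm{\GG_1^\ell}_{L^\infty(B^{d/2-1})}$, the hardest of the nonlinear terms, bounded via the explicit form of $\HH$ and $K$, exactly as in Step 2 but now with $L^\infty$ in time rather than $L^1$, which is legitimate since each summand of $\HH$ is a product of two factors each in $L^\infty_t$ of the right Besov space.

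The main obstacle is the velocity equation: the terms $\mu\Delta\uu_D^\ell$ and $C_0\nabla m_D^\ell$ have regularity only $B^{d/2-3}$ and $B^{d/2-2}$ respectively if one uses the $L^\infty_t B^{d/2-1}$ bounds naively, which is worse than the claimed $B^{d/2-5/4}$. The resolution is to exploit the parabolic smoothing: $\uu_D^\ell$ (being the discrepancy) actually enjoys an improved regularity, and one interpolates the $L^1_t(B^{d/2+1})$-bound on $\uu^\ell$ (hence on $\uu_D^\ell$, since $\uu_L^\ell$ is uniformly bounded in $E^{d/2}$) against the $L^\infty_t(B^{d/2-1})$-bound to land $\partial_t\uu_D^\ell$ in $L^\infty_t B^{d/2-5/4}$; the $\mathcal{C}^{1/8}$ exponent is precisely the interpolation weight. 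Concretely, one applies a standard lemma (essentially Lemma~\ref{lem.fgbesov}-type bookkeeping combined with the fact that $f\in L^\infty_t B^{\sigma_0}$ with $\partial_t f\in L^\infty_t B^{\sigma_1}$, $\sigma_1<\sigma_0$, implies $f\in\mathcal{C}^{\theta}_t B^{\sigma_\theta}$ for the interpolated index). Everything else is routine multiplication in Besov spaces using the uniform-in-$\ell$ bounds already in hand.
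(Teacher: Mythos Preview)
Your proposal has a genuine gap in the product estimates. You claim (a) and (c) can be closed with $L^\infty_t$-in-time bounds, but this fails: for $\uu^\ell\cdot\nabla n^\ell$ to land in $B^{d/2-1}$ you need one factor in $B^{d/2}$, yet $\uu^\ell$ lies only in $L^\infty(\R^+;B^{d/2-1})$ and $\nabla n^\ell$ only in $L^\infty(\R^+;B^{d/2-1})$ (there is no $L^\infty_t B^{d/2+1}$ control on $n^\ell$, which solves a pure transport equation with no smoothing). Your attempted fix, invoking $\|n^\ell\|_{B^{d/2+1}}$, uses regularity that is simply unavailable. The same problem hits $F_1^\ell=-\J((m^\ell-bn^\ell)\dive\uu^\ell)$: since $\dive\uu^\ell\in L^\infty_t B^{d/2-2}$ only, the product sits in $L^\infty_t B^{d/2-2}$, not $B^{d/2-1}$. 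You also dropped the term $-a\mb\,\dive\uu_D^\ell$ from $\partial_t m_D^\ell$.

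The paper's route is different and avoids this obstruction entirely: instead of $L^\infty_t$ bounds on the time derivatives, it places them in $L^p_t$ with $p<\infty$, exploiting the time--space interpolation $\uu^\ell\in L^2(\R^+;B^{d/2})$ (from $L^\infty_t B^{d/2-1}\cap L^1_t B^{d/2+1}$). This yields $\partial_t n_D^\ell,\ \partial_t m_D^\ell\in L^2(\R^+;B^{d/2-1})$ and $\partial_t\uu_D^\ell\in (L^\infty+L^{8/3}+L^{8/7})(\R^+;B^{d/2-5/4})$. The H\"older exponents $1/2$ and $1/8$ then come directly from the Morrey embedding $W^{1,p}(\R)\hookrightarrow C^{1-1/p}(\R)$ applied in the time variable (with $p=2$ and $p=8/7$ respectively), not from the spatial-regularity interpolation you sketch at the end. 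Your final interpolation lemma (trading spatial regularity for time H\"older continuity) is a legitimate alternative idea, and would recover the statement for $n_D^\ell$ and $m_D^\ell$ if you first correctly place $\partial_t n_D^\ell,\partial_t m_D^\ell$ in $L^\infty_t B^{d/2-2}$; but you never do this, and for $\uu_D^\ell$ in dimension $d=2$ the convection term $\uu^\ell\cdot\nabla\uu^\ell$ may not be bounded in $L^\infty_t$ of any Besov space at all, so the $L^p_t$ route is the robust one.
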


\begin{proof}
Since
  \begin{align*}
    \partial_t n_D^\ell=-\J(\uu^\ell\cdot\nabla n^\ell),
  \end{align*}
we have $\partial_t n_D^\ell\in L^2(\R^+;B^{d/2-1})$ since $n^\ell\in L^\infty(\R^+;B^{d/2})$ and $\uu^\ell\in L^2(\R^+;B^{d/2})$ with the help of the interpolation theorem.

From the equation
\begin{align*}
  \partial_t m_D^\ell=-\J(\uu^\ell\cdot\nabla m^\ell)-a\mb\dive\uu^\ell+a\mb\dive \uu_L^\ell- \J((m^\ell-bn^\ell)\dive\uu^\ell),
\end{align*}
it follows that $\partial_t m_D^\ell\in L^2(\R^+;B^{d/2-1})$.

Recall that
\begin{align*}
  \partial_t \uu_D^\ell=&-\J(\uu^\ell\cdot\nabla\uu^\ell)+\mu\Delta\uu^\ell+\mu\Delta\uu_L^\ell +(\mu+\lambda)\nabla\dive\uu^\ell-(\mu+\lambda)\nabla\dive\uu_L^\ell\\
  &-C_0\nabla m^\ell-C_0\nabla m_L^\ell+\GG_1^\ell,
\end{align*}
we can obtain $\partial_t \uu_D^\ell\in (L^\infty+L^{8/3}+L^{8/7})(\R^+;B^{d/2-5/4})$ through easy but tedious computations with the help of Lemmas \ref{lem.comp}, \ref{lem.fgbesov} and \ref{lem.inter}.

Applying the Morrey embedding relation $W^{1,p}(\R)\subset C^{1-1/p}(\R)$ to the time variable for $1<p\ls\infty$, we obtain the desired result.
\end{proof}

\emph{Step 4: Compactness and convergence.} The proof of the
existence of a solution is now standard. Indeed, we can use
Arzel\`a-Ascoli theorem to get strong convergence of the approximate
solutions. We need to localize the spatial space in order to utilize
some compactness results of local Besov spaces (see \cite[Chapter
2]{BCDbook}). Let $(\chi_p)_{p\in\N}$ be a sequence of
$\mathscr{D}(\R^d)$ cut-off functions supported in the ball
$B(0,p+1)$ of $\R^d$ and equal to $1$ in a neighborhood of $B(0,p)$.
In view of Lemma \ref{lem.time} and uniform estimates obtained in
Step 2, we see that
$((\chi_pm_D^\ell,\chi_pn_D^\ell,\chi_p\uu_D^\ell))_{\ell\in\N}$ is
bounded in $E^{d/2}$ and uniformly equi-continuous in
$$\mathcal{C}\left([0,T]; (B^{d/2-1})^{1+1}\times (B^{d/2-5/4})^d\right)$$
for
any $p\in\N$ and $T>0$. Moreover, the mapping $f\mapsto \chi_p f$ is
compact from $B^{d/2-1,d/2}$ into $B^{d/2-1}$ and from $B^{d/2-1}$
into $B^{d/2-5/4}$.

Applying the Arzel\`a-Ascoli theorem to the family
$((\chi_pm_D^\ell,\chi_pn_D^\ell,\chi_p\uu_D^\ell))_{\ell\in\N}$ on
the time interval $[0,p]$, then we use the Cantor diagonal process.
This finally provides us with a distribution $(m_D,n_D,\uu_D)$
continuous in time with values in $(B^{d/2-1})^{1+1}\times
(B^{d/2-5/4})^d$ and a subsequence (which we still denote by
the same notation) such that we have
for all $p\in\N$
$$
(\chi_pm_D^\ell,\chi_pn_D^\ell,\chi_p\uu_D^\ell)
\to (\chi_pm_D,\chi_pn_D,\chi_p\uu_D), \text{ as } \ell\to\infty
$$
in $\mathcal{C}([0,p]; (B^{d/2-1})^{1+1}\times (B^{d/2-5/4})^d)$.
This obviously implies that $(m_D^\ell,n_D^\ell,\uu_D^\ell)$ tends
to $(m_D,n_D,\uu_D)$ in $\mathscr{D}'(\R^+\times\R^d)$.

Coming back to the uniform estimates and Lemma \ref{lem.time}, we
further  obtain that $(m_D,n_D,\uu_D)$ belongs to $E^{d/2}$ and to
$(\mathcal{C}^{1/2}(\R^+;B^{d/2-1}))^{1+1}\times
(\mathcal{C}^{1/8}(\R^+; B^{d/2-5/4}))^d$. The convergence results
stemming from this last result and the interpolation argument enable
us to pass to the limit in $\mathscr{D}'(\R^+\times\R^d)$ in the
system \eqref{eq.system.app} and to prove that
$(m,n,\uu):=(m_L,n_L,\uu_L)+(m_D,n_D,\uu_D)$ is indeed a solution of
\eqref{eq.reform.system} with the initial data. Since it is just a
matter of doing tedious verifications, we omit the details.

\emph{Step 5: Continuities in time.} The continuity of $\uu$ is
straightforward. Indeed, from the third equation of
\eqref{eq.reform.system}, we have $\uu_t\in
(L^1+L^2)(\R^+;(B^{d/2-1})^d)$ which implies
$\uu\in\mathcal{C}(\R^+;(B^{d/2-1})^d)$ in view of the Morrey
embedding and the embedding relation $W^{1,1}(\R)\subset
\mathcal{C}(\R)$. Consequently, the continuity of $n$ in time is
obtained from \eqref{eq.prop.1}. For $m$, it is easily to see that
$m_t\in L^2(\R^+;B^{d/2-1})\cap L^1(\R^+;B^{d/2})$ from the second
equation of \eqref{eq.reform.system} which yields $m\in
\mathcal{C}(\R^+;B^{d/2-1,d/2})$ by the embeddings mentioned above.

\emph{Step 6: Uniqueness.} Next, we prove the uniqueness of
solutions. Let $(m_1,n_1,\uu_1)$ and $(m_2,n_2,\uu_2)$ be two
solutions of \eqref{eq.reform.system} in $E_T^{d/2}$ with the same
initial data. Denote $(\delta m,\delta
n,\delta\uu)=(m_2-m_1,n_2-n_1,\uu_2-\uu_1)$. Then they satisfy
\begin{align}
  \left\{\begin{aligned}
    &\partial_t\delta n+\uu_2\cdot\nabla \delta n=-\delta\uu\cdot\nabla n_1,\\
    &\partial_t\delta m+\uu_2\cdot\nabla \delta m+a\mb\dive\delta\uu=-\delta\uu\cdot\nabla m_1+\delta F,\\
    &\partial_t\delta\uu+\uu_2\cdot\nabla\delta\uu-\mu\Delta\delta\uu -(\mu+\lambda)\nabla\dive\delta\uu+C_0\nabla\delta m=-\delta\uu\cdot\nabla \uu_1+\delta\GG,\\
    &(\delta m,\delta n,\delta\uu)|_{t=0}=(0,0,\mathbf{0}),
  \end{aligned}\right.
\end{align}
where $\delta F=F(m_2,n_2,\uu_2)-F(m_1,n_1,\uu_1)$ and $\delta \GG=\GG(m_2,n_2,\uu_2)-\GG(m_1,n_1,\uu_1)$.

We first consider the case $d\gs 3$. Similar to the derivation of \eqref{eq.prop.1}, we can get for $t\in[0,T]$ with the help of \eqref{eq.prop.1}
\begin{align}
  \norm{\delta n}_{\LL([0,T]; B^{d/2-2,d/2-1})}\ls & e^{C\int_0^T\norm{\uu_2}_{B^{d/2+1}}d\tau}\norm{\delta\uu}_{L^1([0,T];B^{d/2})} \norm{n_1}_{L^\infty([0,T];B^{d/2-1,d/2})}\nonumber\\
  \lesssim& e^{C\int_0^T\norm{(\uu_1,\uu_2)}_{B^{d/2+1}}d\tau}\norm{n_0}_{B^{d/2-1,d/2}} \norm{\delta\uu}_{L^1([0,T];B^{d/2})} .
\end{align}
By Lemma \ref{lem.comp} and \ref{lem.fgbesov}, we have
\begin{align*}
   &\norm{\delta m}_{\LL([0,T]; B^{d/2-2,d/2-1})}+\norm{\delta\uu}_{\LL([0,T];B^{d/2-2})}\\
   &\qquad\qquad+\norm{\delta m}_{L^1([0,T]; B^{d/2,d/2-1})}+\norm{\delta\uu}_{L^1([0,T];B^{d/2})}\nonumber\\
  \lesssim &e^{C\int_0^T\norm{\uu_2}_{B^{d/2+1}}d\tau} \big[\norm{\delta\uu\cdot\nabla m_1}_{L^1([0,T]; B^{d/2-2,d/2-1})} +\norm{\delta F(\tau)}_{L^1([0,T]; B^{d/2-2,d/2-1})}\\
   &\qquad\qquad+\norm{\delta\uu\cdot\nabla\uu_1}_{L^1([0,T]; B^{d/2-2})} +\norm{\delta\GG(\tau)}_{L^1([0,T]; B^{d/2-2})}\big]\\
   \lesssim & e^{C\int_0^T\norm{\uu_2}_{B^{d/2+1}}d\tau}(\norm{\delta\uu}_{L^1([0,T];B^{d/2})} \norm{m_1}_{\LL([0,T];B^{d/2-1,d/2})}\\
   &+\norm{(\delta m,\delta n)}_{\LL([0,T]; B^{d/2-2,d/2-1})} \norm{\uu_2}_{L^1([0,T]; B^{d/2+1})}\\
    &+\norm{(m_1,n_1)}_{\LL([0,T]; B^{d/2-1,d/2})}\norm{\delta\uu}_{L^1([0,T]; B^{d/2})}\\
   &+\norm{\delta\uu}_{\LL([0,T]; B^{d/2-2})}\norm{\uu_1}_{L^1([0,T]; B^{d/2+1})}+\norm{\delta\GG(\tau)}_{L^1([0,T]; B^{d/2-2})}\\
   \lesssim & e^{C\norm{\uu_2}_{L^1([0,T]; B^{d/2+1})}} \Big(\big(1+\norm{(m_2,n_2)}_{\LL([0,T]; B^{d/2-1,d/2})}\big)\\
   &\qquad\qquad\qquad\qquad \times\norm{(m_1,n_1)}_{\LL([0,T]; B^{d/2-1,d/2})}+Z(T)\Big)\norm{(\delta m,\delta n,\delta\uu)}_{S_T^{d/2-1}},
\end{align*}
where $\limsup_{T\to 0^+}Z(T)=0$. Thus,
\begin{align*}
  &\norm{(\delta m,\delta n,\delta\uu)}_{E_T^{d/2-1}}\\
  \ls& C e^{C\norm{(\uu_1,\uu_2)}_{L^1([0,T]; B^{d/2+1})}} \Big((1+\norm{(m_2,n_2)}_{\LL([0,T]; B^{d/2-1,d/2})})\\
  &\times\norm{(m_1,n_1)}_{\LL([0,T]; B^{d/2-1,d/2})}+E_0+Z(T)\Big)\norm{(\delta m,\delta n,\delta\uu)}_{E_T^{d/2-1}}.
\end{align*}
We take $E_0$ small enough such that it satisfies the condition $2C(1+CA\bar{C}E_0)A\bar{C}E_0+E_0<1/4$ and  \eqref{eq.small}, and choose $T>0$ so small that $C\norm{(\uu_1,\uu_2)}_{L^1([0,T]; B^{d/2+1})}\ls \ln 2$ and $Z(T)<1/2$, then it follows $\norm{(\delta m,\delta n,\delta\uu)}_{E_T^{d/2-1}}\equiv 0$. Hence, $(m_1,n_1,\uu_1)(t)=(m_2,n_2,\uu_2)(t)$ on $[0,T]$. By a standard argument (e.g. \cite{Danchin00}), we can conclude that $(m_1,n_1,\uu_1)(t)=(m_2,n_2,\uu_2)(t)$ on $\R^+$.

For the case $d=2$, we have to raise the regularity of the spaces.
Thus, we also suppose that $m_0,n_0\in B^{\eps,1+\eps}$ and
$\uu_0\in B^{\eps}$ for a $\eps\in(0,1)$. By the same process, we
can prove the existence of solution $(m,n,\uu)$ in the space
$E^{1+\eps}$  provided the norms of initial data is sufficiently
small. Then, in the same way as in the case $d\gs 3$, we may prove
the uniqueness of solutions in the space $E^{\eps}$ (of course,
holds in $E^1$). We omit the details.

\section{Local well-posedness for large data}\label{sec.lwp}

\subsection{Reformulation of the system}

We change variables to $\rho=\mb(\tilde{m}^{-1}-\mb^{-1})$ and
$g=\tilde{n}-\nb$. Then we can reformulate the system
\eqref{eq.system}-\eqref{eq.data} as
\begin{align}\label{eq.system.2}
  \left\{\begin{aligned}
    &\rho_t+\uu\cdot\nabla\rho=(\rho+1)\dive\uu,\\
    &g_t+\uu\cdot\nabla g=-(g+\nb)\dive\uu,\\
    &\uu_t+\uu\cdot\nabla\uu-(1+\rho)(\mu\Delta\uu+(\mu+\lambda)\nabla\dive\uu) +Q(\rho,g)=0,\\
    &(\rho,g,\uu)|_{t=0}=(\rho_0,g_0,\uu_0),
  \end{aligned}\right.
\end{align}
where $\rho_0=\mb(\tilde{m}_0^{-1}-\mb^{-1})$, $g_0=\tilde{n}_0-\nb$ and
\begin{align*}
  Q(\rho,g):=&\mb^{-1}(1+\rho)\nabla P(\mb/(1+\rho),g+\nb)\\
  =&\frac{\rho\nabla \rho}{\rho+1}-\nabla\rho+\frac{a_0}{\mb}(\rho+1)\nabla g+B(\rho,g)\Big[-\frac{\mb}{(\rho+1)^2}\nabla\rho +\frac{k_0-a_0\nb}{\rho+1}\nabla\rho\\
  &+\frac{a_0g\nabla\rho}{\rho+1} +\frac{a_0^2}{\mb}(g+g\rho)\nabla g
+\frac{a_0(k_0+\mb+a_0\nb)}{\mb}\nabla g+\frac{a_0k_0+a_0^2\nb}{\mb}\rho\nabla g\Big],
\end{align*}
with
\begin{align*}
  B(\rho,g):=\Big[\big(\frac{\mb}{\rho+1}+a_0(g+\nb)-k_0\big)^2+4k_0a_0(g+\nb)\Big]^{-1/2}.
\end{align*}

We now state the result for the local theory for general data bounded away from the vacuum as follows.

\begin{theorem}\label{thm.4}
  Let $d\gs 2$, $\mu>0$, $2\mu+d\lambda\gs 0$, the constants $\mb>0$ and $\nb\gs 0$. Assume that $\rho_0\in B^{d/2,d/2+1}$, $g_0\in B^{d/2,d/2+1}$ and $\uu_0\in B^{d/2-1,d/2}$. In addition, $\inf\limits_{x\in\R^d} \rho_0(x)>-1$. Then there exists a positive time $T$ such that the system \eqref{eq.system.2} has a unique solution $(\rho,g,\uu)$ on $[0,T]\times\R^d$ which belongs to
  $$(\tilde{\mathcal{C}}([0,T];B^{d/2,d/2+1}))^{1+1}\times(\tilde{\mathcal{C}}([0,T]; B^{d/2-1,d/2})\cap L^1([0,T];B^{d/2+1,d/2+2}))^d,$$
  and satisfies $\inf\limits_{(t,x)\in[0,T]\times\R^d} \rho(t,x)>-1$.
\end{theorem}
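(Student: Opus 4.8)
The plan is to follow the classical scheme for quasilinear parabolic--hyperbolic systems in the hybrid Besov framework of \cite{BCDbook,Danchin07}: construct approximate solutions, prove uniform bounds on a short time interval, pass to the limit, and then settle uniqueness and time continuity, using throughout the linear transport and parabolic estimates together with the product, composition and interpolation estimates recalled in the appendix. Concretely, starting from $(\rho^0,g^0,\uu^0)=(0,0,\mathbf{0})$, I would build $(\rho^{n+1},g^{n+1},\uu^{n+1})$ inductively: first solve the two linear transport equations for $\rho^{n+1}$ and $g^{n+1}$ carried by $\uu^n$, with right--hand sides $(1+\rho^n)\dive\uu^n$ and $-(g^n+\nb)\dive\uu^n$ (the first two equations of \eqref{eq.system.2} with the nonlinearities frozen at step $n$); then solve the linear parabolic equation for $\uu^{n+1}$ with convection term $\uu^n\cdot\nabla$, variable viscosity coefficient $1+\rho^{n+1}$, and source $-Q(\rho^{n+1},g^{n+1})$. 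Each step is well posed in the space of the theorem by the linear theory for transport equations and for parabolic systems with a convection term and a variable viscosity, and, as explained below, $1+\rho^{n+1}$ stays bounded away from $0$ on a short interval.

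\emph{Uniform bounds.} Transport estimates in $B^{d/2,d/2+1}$ bound $\rho^{n+1}$ and $g^{n+1}$ by the data plus the usual exponential Gronwall factor in $\int_0^T\norm{\nabla\uu^n}_\infty\,d\tau$ times the $L^1([0,T];B^{d/2,d/2+1})$--norms of the sources, which are controlled by Lemmas~\ref{lem.comp} and~\ref{lem.fgbesov} through $\norm{\uu^n}_{L^1([0,T];B^{d/2+1,d/2+2})}$ and the induction hypothesis. For $\uu^{n+1}$ I would write $1+\rho^{n+1}=1+S_{m_0}\rho^{n+1}+(\mathrm{Id}-S_{m_0})\rho^{n+1}$ and fix the truncation level $m_0$ so large that the high--frequency remainder is small in $L^\infty$; the equation then becomes a small perturbation of the constant--coefficient parabolic system with convection term, the smooth low--frequency coefficient $S_{m_0}\rho^{n+1}$ being absorbed by localized energy estimates and commutator bounds, while $Q(\rho^{n+1},g^{n+1})$ is estimated in $L^1([0,T];B^{d/2-1,d/2})$ by the composition estimate of \lemref{lem.fgbesov} (the uniform lower bound on $1+\rho^{n+1}$ being used to control $B(\rho,g)$ and the factor $1/(\rho+1)$). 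Collecting these and choosing $T$ small in terms of the data norms closes the induction: $(\rho^n,g^n,\uu^n)_n$ stays in a fixed ball of $F_T^1$, and $\inf_{[0,T]\times\R^d}(1+\rho^n)>0$ uniformly --- the latter because $\norm{\rho^n-\rho_0}_{L^\infty([0,T];L^\infty)}\to0$ as $T\to0$, by the extra time integrability coming from the transport equation for $\rho^n$.

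\emph{Convergence, uniqueness, continuity.} I would then show that $(\rho^n,g^n,\uu^n)_n$ is a Cauchy sequence in the space with one derivative less for the masses --- $\rho,g$ in $\tilde{\mathcal{C}}([0,T];B^{d/2-1,d/2})$ and $\uu$ in $\tilde{\mathcal{C}}([0,T];B^{d/2-2,d/2-1})\cap L^1([0,T];B^{d/2,d/2+1})$ --- by applying the same estimates to the differences and invoking Gronwall after shrinking $T$; interpolation (\lemref{lem.inter}) with the uniform bounds above then upgrades this to convergence in the natural space, and the limit $(\rho,g,\uu)$ solves \eqref{eq.system.2} with the prescribed data and regularity (alternatively one may argue by compactness with localized Besov embeddings, as in the existence part of the global theory). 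Uniqueness follows from the very same difference estimate in the one--derivative--lower space. Time continuity in $B^{d/2,d/2+1}$ (resp.\ $B^{d/2-1,d/2}$) for the masses (resp.\ velocity) comes from the equations, which give extra time regularity of the discrepancy to the free solutions, together with a density argument; and $\inf_{(t,x)\in[0,T]\times\R^d}\rho(t,x)>-1$ follows by integrating the $\rho$--equation along the flow of $\uu$, since $\dive\uu\in L^1([0,T];L^\infty)$.

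\emph{Main obstacle.} The technical heart is the \emph{a priori} estimate for the variable--coefficient parabolic equation for $\uu$: the term $\rho\,\Delta\uu$ is of the same order as the principal part, so it cannot simply be moved to the right--hand side unless $\rho$ is small in $L^\infty$, which is not assumed here. Splitting $\rho$ into a small high--frequency part and a smooth low--frequency part and absorbing the resulting commutators is exactly what makes the scheme close, and it is precisely where the sharpened density estimates in hybrid Besov spaces alluded to in the introduction enter. A secondary difficulty is that the nonlinear pressure term $Q(\rho,g)$, whose denominator involves $1+\rho$ and the square root $B(\rho,g)$, must be controlled by the composition estimate together with the uniform lower bound on $1+\rho$; this is where the hypothesis $\inf_x\rho_0(x)>-1$ is propagated along the evolution.
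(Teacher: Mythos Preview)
Your scheme is sound, but the paper takes a different route in two respects. First, instead of Picard iteration plus a Cauchy-sequence argument, the paper uses Friedrichs spectral truncation $\J$ to turn \eqref{eq.system.2} into an ODE in $L^2_\ell$, solves it by Cauchy--Lipschitz, and passes to the limit by compactness (Arzel\`a--Ascoli together with uniform H\"older-in-time bounds, Lemma~\ref{lem.timelocal}); it also subtracts the free parabolic flow $\uu_{\mathrm{ls}}$ so that the remainder $\tilde\uu$ starts from zero, which makes the smallness-for-small-$T$ bootstrap explicit. Your iteration-plus-contraction approach is a legitimate alternative and sidesteps the compactness machinery, but be careful with the regularity chosen for the difference estimate: with $\delta\uu\in\tilde{\mathcal{C}}([0,T];B^{d/2-2,d/2-1})$ the low-frequency index $d/2-2$ hits the excluded endpoint $-d/2$ of Lemma~\ref{lem.moment} and of the product estimates when $d=2$. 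The paper's uniqueness step instead keeps $\delta\rho,\delta g$ in $B^{d/2}$ and only lowers $\delta\uu$ to $\tilde L^\infty([0,T];B^{d/2-1})\cap L^1([0,T];B^{d/2+1})$, which is safe for all $d\ge2$; you should make the same adjustment.

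Second, the frequency splitting $\rho=S_{m_0}\rho+(\mathrm{Id}-S_{m_0})\rho$ that you single out as the ``main obstacle'' is exactly the mechanism behind Lemma~\ref{lem.moment}, quoted from \cite[Proposition~10.12]{BCDbook}; the paper simply invokes that lemma as a black box rather than redoing the localized energy/commutator argument. So your diagnosis of the technical heart is correct, but the result is already packaged in the cited reference and need not be reproved.
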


\subsection{A priori Estimates}

Now, let us recall some estimates for the following parabolic system which is obtained by linearizing the momentum equation
\begin{align*}
  \left\{\begin{aligned}
    &\uu_t+\vv\cdot\nabla\uu+\uu\cdot\nabla\ww-b(t,x)(\mu\Delta\uu+(\mu+\lambda) \nabla\dive\uu) =f,\\
    &\uu|_{t=0}=\uu_0,
  \end{aligned}\right.
\end{align*}
which had been studied in \cite{BCDbook,Danchin07}. Precisely, we have the following lemma (cf. \cite[Proposition 10.12]{BCDbook}).

\begin{lemma}\label{lem.moment}
  Let $\alpha\in(0,1]$, $s\in(-d/2,d/2]$, $\underline{\nu}=\min(\mu,\lambda+2\mu)$ and $\bar{\nu}=\mu+\abs{\mu+\lambda}$. Assume that $b=1+\rho$ with $\rho\in L^\infty([0,T]; B^{d/2+\alpha})$ and that
  \begin{align*}
    b_*:=\inf_{(t,x)\in[0,T]\times\R^d} b(t,x)>0.
  \end{align*}
  There exist a universal constant $\kappa$, and a constant $C$ depending only on $d$, $\alpha$ and $s$, such that for all $t\in[0,T]$,
  \begin{align*}
    &\norm{\uu}_{\LL([0,t];B^{s})}+\kappa b_*\underline{\nu}\norm{\uu}_{L^1([0,t];B^{s+2})}\\
    \ls &\left(\norm{\uu_0}_{B^{s}} +\norm{f}_{L^1([0,t];B^{s})}\right)\\
    &\times\exp\left(C\int_0^t\Big(\norm{\vv}_{B^{d/2+1}}+\norm{\ww}_{B^{d/2+1}} +(b_*\underline{\nu})^{1-2/\alpha}\bar{\nu}^{2/\alpha} \norm{\rho}_{B^{d/2+\alpha}}^{2/\alpha}\Big)d\tau\right).
  \end{align*}
  If $\vv$ and $\ww$ depend linearly on $\uu$, then the above inequality is true for all $s\in(0,d/2+\alpha]$, and the argument of the exponential term may be replaced with
  \begin{align*}
    C\int_0^t\left(\norm{\nabla\uu}_\infty+(b_*\underline{\nu})^{1-2/\alpha} \bar{\nu}^{2/\alpha}\norm{\rho}_{B^{d/2+\alpha}}^{2/\alpha}\right)d\tau.
  \end{align*}
\end{lemma}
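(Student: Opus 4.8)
The plan is to prove Lemma~\ref{lem.moment} by a Littlewood-Paley/localization argument, treating the variable-coefficient viscosity term $b(t,x)(\mu\Delta\uu+(\mu+\lambda)\nabla\dive\uu)$ as a perturbation of the constant-coefficient parabolic operator with coefficient the space-independent lower bound $b_*$. First I would apply the dyadic block $\dk$ to the equation, obtaining
\begin{align*}
  \partial_t\dk[\uu]+b_*(\mu\Delta+(\mu+\lambda)\nabla\dive)\dk[\uu]=\dk[f]-\dk(\vv\cdot\nabla\uu)-\dk(\uu\cdot\nabla\ww)+R_k,
\end{align*}
where the remainder $R_k=\dk((b-b_*)(\mu\Delta\uu+(\mu+\lambda)\nabla\dive\uu))$ collects the variable-coefficient discrepancy. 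Taking the $L^2$ inner product with $\dk[\uu]$ and using the spectral localization to bound $-(b_*(\mu\Delta+(\mu+\lambda)\nabla\dive)\dk[\uu],\dk[\uu])\gs \kappa b_*\underline{\nu}\,2^{2k}\norm{\dk[\uu]}_2^2$ for a universal $\kappa$ yields the basic differential inequality
\begin{align*}
  \tfrac12\tfrac{d}{dt}\norm{\dk[\uu]}_2^2+\kappa b_*\underline{\nu}\,2^{2k}\norm{\dk[\uu]}_2^2\ls \norm{\dk[\uu]}_2\big(\norm{\dk[f]}_2+\norm{\dk(\vv\cdot\nabla\uu)}_2+\norm{\dk(\uu\cdot\nabla\ww)}_2+\norm{R_k}_2\big).
\end{align*}
Then I would multiply by $2^{ks}$, sum over $k$, and integrate in time to get an integral inequality for $\norm{\uu}_{\LL([0,t];B^s)}+\kappa b_*\underline{\nu}\norm{\uu}_{L^1([0,t];B^{s+2})}$.

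The convection terms $\dk(\vv\cdot\nabla\uu)$ and $\dk(\uu\cdot\nabla\ww)$ are handled by the standard commutator estimates (of the type already invoked in the proof of Proposition~\ref{proposition}): after summation they contribute $\int_0^t(\norm{\vv}_{B^{d/2+1}}+\norm{\ww}_{B^{d/2+1}})\norm{\uu}_{B^s}d\tau$ for $s\in(-d/2,d/2]$, which feeds into the Gronwall step and produces the first two terms in the exponential. For the linearized case where $\vv,\ww$ depend linearly on $\uu$, one instead keeps one derivative explicit and estimates $\norm{\dk(\vv\cdot\nabla\uu)}_2$ by $\norm{\nabla\uu}_\infty 2^{-ks}\gamma_k\norm{\uu}_{B^s}$ plus a commutator, which is what allows the range to be extended to $s\in(0,d/2+\alpha]$ and the exponent to be $\int_0^t\norm{\nabla\uu}_\infty d\tau$.

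The main obstacle is the remainder term $R_k$: one must show $\sum_k 2^{ks}\int_0^t\norm{R_k}_2\,d\tau$ can be absorbed, producing precisely the awkward exponent $(b_*\underline{\nu})^{1-2/\alpha}\bar{\nu}^{2/\alpha}\norm{\rho}_{B^{d/2+\alpha}}^{2/\alpha}$. The idea is to paraproduct-decompose $(b-b_*)\partial^2\uu=\rho\,\partial^2\uu$ into $T_\rho\partial^2\uu+T_{\partial^2\uu}\rho+R(\rho,\partial^2\uu)$; the last two pieces are lower order and are handled by product laws in Besov spaces, while the first piece $T_\rho\partial^2\uu$ has the same frequency localization as $\partial^2\dk[\uu]$, so for low frequencies of $\rho$ it can be directly absorbed into the dissipative term on the left (this needs the low-frequency part of $\rho$ to be small, which costs nothing after splitting $\rho=S_{N}\rho+(\mathrm{Id}-S_N)\rho$ and choosing $N$ appropriately), while the high-frequency part of $\rho$ is estimated in $L^\infty_t B^{d/2+\alpha}$ and, via Bernstein's inequality, converted at the price of a factor $2^{-N\alpha}$. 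Optimizing the threshold $N$ — balancing the absorbed term $\sim b_*\underline{\nu}2^{2k}$ against $\bar{\nu}2^{N}\cdot 2^{-N\alpha}\norm{\rho}_{B^{d/2+\alpha}}$ in the Gronwall exponent — is exactly what yields the exponents $1-2/\alpha$ and $2/\alpha$; this interpolation-type bookkeeping is the technical heart of the argument. The remaining steps (Gronwall, and the Chemin--Lerner $\LL$ refinement to commute the time integration past the summation over $k$, which is routine and parallel to Step~2 of Proposition~\ref{proposition}) then close the proof; I would refer to \cite[Proposition 10.12]{BCDbook} for the details common to the compressible Navier--Stokes setting.
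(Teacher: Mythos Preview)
The paper does not prove this lemma; it is quoted from \cite[Proposition~10.12]{BCDbook} (see the sentence immediately preceding the statement), so there is no in-paper proof to compare against. Your sketch has the right architecture --- localize, paraproduct, frequency threshold, Gronwall --- and correctly identifies the interpolation step behind the $2/\alpha$ exponents, but the way you extract the dissipation has a gap.

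Splitting $b=b_*+(b-b_*)$ and placing the whole variable part on the right will not close. Aside from the slip $b-b_*=(1-b_*)+\rho\neq\rho$, the paraproduct piece $T_\rho(\mu\Delta\uu+(\mu+\lambda)\nabla\dive\uu)$ localized at frequency $k$ is of size $\bar{\nu}\,\norm{S_{k-1}\rho}_\infty\,2^{2k}\norm{\dk\uu}_2$, and $\norm{S_{k-1}\rho}_\infty$ is of order $\norm{\rho}_\infty$, not small, so it cannot be absorbed into $\kappa b_*\underline{\nu}\,2^{2k}\norm{\dk\uu}_2$; your further splitting $\rho=S_N\rho+(\mathrm{Id}-S_N)\rho$ does not help here, since it is the high-frequency tail $(\mathrm{Id}-S_N)\rho$ that is small in $L^\infty$, not the low-frequency part. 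The device in \cite{BCDbook} is to keep the full variable coefficient in the principal part: after paralinearization the main term is $-S_{k-1}b\cdot(\mu\Delta+(\mu+\lambda)\nabla\dive)\dk\uu$, and one uses the pointwise bound $S_{k-1}b=b-(\mathrm{Id}-S_{k-1})\rho\geq b_*-C2^{-k\alpha}\norm{\rho}_{B^{d/2+\alpha}}\geq b_*/2$ for $k$ above the threshold. The genuine remainder is then the paraproduct commutator, which costs only $\alpha$ derivatives; interpolating $B^{s+2-\alpha}$ between $B^s$ and $B^{s+2}$ and applying Young's inequality with parameter proportional to $b_*\underline{\nu}$ yields the exponent $(b_*\underline{\nu})^{1-2/\alpha}\bar{\nu}^{2/\alpha}\norm{\rho}_{B^{d/2+\alpha}}^{2/\alpha}$ --- equivalent to your threshold optimization, but applied to the correct remainder.
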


For the mass equations, we only need to study the following equation
with two constants $\theta\in\R$ and $\beta>0$
\begin{align}\label{eq.trans}
  \left\{\begin{aligned}
    &h_t+\vv\cdot\nabla h=\theta(h+\beta)\dive\vv,\\
    &h|_{t=0}=h_0.
  \end{aligned}\right.
\end{align}
\begin{proposition}\label{prop.2}
  Let $s\in(-d/2,d/2+1]$, $T>0$, $\theta\in\R$ and $\beta\gs 0$ be constants. Assume that $h_0\in B^{d/2}$, $\vv\in L^1([0,T); B^{d/2+1})$ and $a$ satisfies \eqref{eq.trans}. There exists a constant $C$ depending only on $d$ such that for all $t\in[0,T]$, we have
  \begin{align}\label{eq.mass.1}
      \norm{h}_{\LL([0,t];B^{d/2})}\ls e^{C(1+2\abs{\theta})\int_0^t\norm{\vv}_{B^{d/2+1}}d\tau} \left(\norm{h_0}_{B^{d/2}}+\frac{\beta}{1+2\abs{\theta}}\right) -\frac{\beta}{1+2\abs{\theta}},
\end{align}
and
\begin{align} \label{eq.mass.1a}
\begin{aligned}
  &\norm{h}_{\LL([0,t];B^{s})}
    \ls e^{C(1+\abs{\theta})\int_0^t\norm{\vv}_{B^{d/2+1}}d\tau}\Big(\norm{h_0}_{B^{s}}
    \\ &+C\abs{\theta}\Big[e^{C(1+2\abs{\theta})\int_0^t\norm{\vv}_{B^{d/2+1}}d\tau} \Big(\norm{h_0}_{B^{d/2}}+\frac{\beta}{1+2\abs{\theta}}\Big) +\frac{2\abs{\theta}\beta}{1+2\abs{\theta}}\Big]\int_0^t\norm{\vv}_{B^{s+1}}d\tau\Big).
\end{aligned}
\end{align}
\end{proposition}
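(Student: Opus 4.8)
The plan is to treat \eqref{eq.trans} as a transport equation whose right-hand side $\theta(h+\beta)\dive\vv$ is linear in $h$ up to the fixed inhomogeneity $\theta\beta\dive\vv$, to derive a closed differential inequality for the relevant Besov norm of $h$, and then to integrate it with an affine Gronwall lemma; estimate \eqref{eq.mass.1} is proved first, and \eqref{eq.mass.1a} is obtained afterwards by feeding \eqref{eq.mass.1} back in.

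For \eqref{eq.mass.1}, first I would localise in frequency. Applying $\dk$ to \eqref{eq.trans} gives
$$\partial_t\dk[h]+\vv\cdot\nabla\dk[h]=\theta\,\dk\big((h+\beta)\dive\vv\big)+R_k,\qquad R_k:=[\vv\cdot\nabla,\dk]h,$$
and taking the $L^2$ inner product with $\dk[h]$, using $(\vv\cdot\nabla\dk[h],\dk[h])=-\tfrac12\int(\dive\vv)|\dk[h]|^2$, yields
$$\tfrac12\tfrac{d}{dt}\norm{\dk[h]}_2^2\ls\Big(\tfrac12\norm{\dive\vv}_\infty\norm{\dk[h]}_2+\norm{R_k}_2+|\theta|\,\norm{\dk\big((h+\beta)\dive\vv\big)}_2\Big)\norm{\dk[h]}_2 .$$
Dividing by $\norm{\dk[h]}_2$, multiplying by $2^{kd/2}$ and summing over $k$, I would invoke the standard commutator estimate (as in \cite{BCDbook}), which gives $\sum_k 2^{kd/2}\norm{R_k}_2\lesssim\norm{\vv}_{B^{d/2+1}}\norm{h}_{B^{d/2}}$, together with the fact that $B^{d/2}$ is an algebra (Lemma~\ref{lem.fgbesov}), so that $\norm{(h+\beta)\dive\vv}_{B^{d/2}}\lesssim(\norm{h}_{B^{d/2}}+\beta)\norm{\vv}_{B^{d/2+1}}$. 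After arranging the constants this produces a bound of the form
$$\norm{h(t)}_{B^{d/2}}\ls\norm{h_0}_{B^{d/2}}+C\int_0^t\norm{\vv}_{B^{d/2+1}}\big((1+2|\theta|)\norm{h}_{B^{d/2}}+\beta\big)\,d\tau ,$$
and applying Gronwall's inequality to the quantity $\norm{h(t)}_{B^{d/2}}+\tfrac{\beta}{1+2|\theta|}$ yields \eqref{eq.mass.1}; since the time integration is kept outside the $k$-sum throughout, the $\LL$-in-time norm is controlled directly.

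For \eqref{eq.mass.1a} I would repeat the argument at regularity $s\in(-d/2,d/2+1]$. The only new point is that $B^s$ need not be an algebra, so Bony's decomposition (Lemma~\ref{lem.fgbesov}) is used to split
$$\norm{(h+\beta)\dive\vv}_{B^s}\lesssim\norm{h}_{B^s}\norm{\vv}_{B^{d/2+1}}+\big(\norm{h}_{B^{d/2}}+\beta\big)\norm{\vv}_{B^{s+1}},$$
and similarly $\sum_k 2^{ks}\norm{R_k}_2\lesssim\norm{\vv}_{B^{d/2+1}}\norm{h}_{B^s}$. The terms carrying $\norm{\vv}_{B^{d/2+1}}$ involve the unknown and go into the Gronwall exponent, producing the factor $e^{C(1+|\theta|)\int_0^t\norm{\vv}_{B^{d/2+1}}d\tau}$; the terms carrying $\norm{\vv}_{B^{s+1}}$, namely $C|\theta|\big(\norm{h}_{B^{d/2}}+\beta\big)\norm{\vv}_{B^{s+1}}$, do not feed back, so they are treated as an inhomogeneous forcing and bounded once and for all by inserting the already-proved estimate \eqref{eq.mass.1} for $\norm{h}_{B^{d/2}}$ — one checks that $\norm{h}_{B^{d/2}}+\beta$ is dominated by the bracketed quantity appearing in \eqref{eq.mass.1a}. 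Running the resulting inhomogeneous Gronwall inequality then gives \eqref{eq.mass.1a}.

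The computation is routine apart from one point, which is the only place where care is really needed: the bookkeeping of the constant $\beta$ (the affine shift) through the Gronwall step, so that the stated closed forms — with the correction term $-\tfrac{\beta}{1+2|\theta|}$, the nested exponentials, and the precise numerical coefficients $1+2|\theta|$ and $1+|\theta|$ — come out as written rather than merely up to harmless constants. I would also record the usual remark that the $L^2$ energy identity above is rigorously justified by first working with a mollified version of $(h,\vv)$ and then passing to the limit, which is legitimate because $h$ is assumed to solve \eqref{eq.trans} with the stated regularity.
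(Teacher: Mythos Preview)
Your proposal is correct and follows essentially the same route as the paper: localise with $\dk$, perform an $L^2$ energy estimate, use product/commutator bounds to close an integral inequality, and run Gronwall first at $s=d/2$ and then at general $s$, feeding \eqref{eq.mass.1} back in. The only cosmetic difference is that the paper handles the convection term in one stroke via Lemma~\ref{lem.innner}, while you split it into the commutator $R_k=[\vv\cdot\nabla,\dk]h$ plus the term $(\vv\cdot\nabla\dk[h],\dk[h])$ integrated by parts; these are equivalent formulations of the same estimate.
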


\begin{proof}
  Applying the operator $\dk$ to \eqref{eq.trans} yields
  \begin{align*}
    \partial_t\dk[h]+\dk(\vv\cdot\nabla h)=\theta\dk((h+\beta)\dive\vv).
  \end{align*}
  Taking the $L^2$ inner product with $\dk[h]$, we get, with the help of Lemmas \ref{lem.fgbesov} and \ref{lem.innner}, that
  \begin{align*}
    &\frac{1}{2}\frac{d}{dt}\norm{\dk[h]}_2^2\\
    =&-(\dk(\vv\cdot\nabla a),\dk[h])+\theta(\dk((h+\beta)\dive\vv),\dk[h])\\
    \lesssim&\gamma_k 2^{-ks}\norm{\vv}_{B^{d/2+1}}\norm{h}_{B^s}\norm{\dk[h]}_2 +\abs{\theta}\gamma_k 2^{-ks}(\norm{h\dive\vv}_{B^{s}}+\beta\norm{\dive\vv}_{B^{s}})\norm{\dk[h]}_2\\
    \lesssim& \gamma_k 2^{-ks}((1+\abs{\theta})\norm{h}_{B^{s}}\norm{\vv}_{B^{d/2+1}} +(\norm{h}_{B^{d/2}}+\beta)\abs{\theta}\norm{\vv}_{B^{s+1}})\norm{\dk[h]}_2.
  \end{align*}
Eliminating the factor $\norm{\dk[h]}_2$ from both sides and integrating in the time, we have
\begin{align*}
\norm{\dk[h]}_2\ls& \norm{\dk[h_0]}_2\\
&+C\gamma_k\int_0^t 2^{-ks}((1+\abs{\theta})\norm{h}_{B^{s}}\norm{\vv}_{B^{d/2+1}} +(\norm{h}_{B^{d/2}}+\beta)\abs{\theta}\norm{\vv}_{B^{s+1}})d\tau.
\end{align*}
It follows, for any $k\in\Z$ and any $t\in[0,T]$, that
\begin{align}\label{eq.mass.4}
\begin{aligned}
  &2^{ks}\norm{\dk[h]}_2\ls 2^{ks}\norm{\dk[h_0]}_2\\
  &\qquad\qquad+C\gamma_k\int_0^t( (1+\abs{\theta})\norm{h}_{B^{s}}\norm{\vv}_{B^{d/2+1}} +(\norm{h}_{B^{d/2}}+\beta)\abs{\theta}\norm{\vv}_{B^{s+1}})d\tau.
\end{aligned}
\end{align}
Summing up on $k\in \Z$, it yields
\begin{align*}
  \norm{h}_{\LL([0,T];B^{s})}\ls &\norm{h_0}_{B^{s}}\\
  &+\int_0^t C[(1+\abs{\theta})\norm{\vv}_{B^{d/2+1}}\norm{h}_{B^{s}} +(\norm{h}_{B^{d/2}}+\beta)\abs{\theta}\norm{\vv}_{B^{s+1}}]d\tau.
\end{align*}

By the Gronwall inequality, we have \eqref{eq.mass.1} for $s=d/2$ and then for any $s\in(-d/2,d/2+1]$
\begin{align*}
    &\norm{h}_{\LL([0,t];B^{s})}\\
    \ls& e^{C(1+\abs{\theta})\int_0^t\norm{\vv}_{B^{d/2+1}}d\tau} \left(\norm{h_0}_{B^{s}}+C\int_0^t\abs{\theta}(\norm{h}_{B^{d/2}}+\beta) \norm{\vv}_{B^{s+1}}d\tau\right)\\
    \ls&e^{C(1+\abs{\theta})\int_0^t\norm{\vv}_{B^{d/2+1}}d\tau}\Big(\norm{h_0}_{B^{s}} \\ &+C\abs{\theta}\left[e^{C(1+2\abs{\theta})\int_0^t\norm{\vv}_{B^{d/2+1}}d\tau} \Big(\norm{h_0}_{B^{d/2}}+\frac{\beta}{1+2\abs{\theta}}\Big) +\frac{2\abs{\theta}\beta}{1+2\abs{\theta}}\right]\int_0^t\norm{\vv}_{B^{s+1}}d\tau\Big).
\end{align*}
This completes the proofs.
\end{proof}

In general, for the transport equation
\begin{align*}
  \left\{\begin{aligned}
    &h_t+\vv\cdot\nabla h=f,\\
    &h(0)=h_0,
  \end{aligned}\right.
\end{align*}
we can get, in a similar way with Proposition \ref{prop.2}, that
\begin{proposition}\label{prop.3}
  Let $s_1,s_2\in(-d/2,d/2+1]$ and $T>0$. Then it holds for $t\in[0,T]$
  \begin{align*}
    \norm{h}_{\LL([0,t]; B^{s_1,s_2})}\ls e^{C\int_0^t\norm{\vv}_{B^{d/2+1}}d\tau}\left( \norm{h_0}_{B^{s_1,s_2}}+\int_0^t\norm{f}_{B^{s_1,s_2}}d\tau\right).
  \end{align*}
\end{proposition}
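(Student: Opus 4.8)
The plan is to mimic the proof of Proposition~\ref{prop.2}, but now with a general right-hand side $f$ and working in hybrid Besov spaces $B^{s_1,s_2}$ rather than a single homogeneous space. First I would apply the Littlewood--Paley block $\dk$ to the transport equation to obtain
\begin{align*}
  \partial_t\dk[h]+\dk(\vv\cdot\nabla h)=\dk[f],
\end{align*}
take the $L^2$ inner product with $\dk[h]$, and use \lemref{lem.innner} together with the commutator estimate for $\dk(\vv\cdot\nabla h)$ (the same one used to produce the factor $\gamma_k 2^{-k\varphi^{s_1,s_2}(k)}\norm{\vv}_{B^{d/2+1}}\norm{h}_{B^{s_1,s_2}}$ in Step~1 of the proof of \lemref{proposition}) to get
\begin{align*}
  \tfrac12\tfrac{d}{dt}\norm{\dk[h]}_2^2\lesssim\gamma_k 2^{-k\varphi^{s_1,s_2}(k)}\big(\norm{\vv}_{B^{d/2+1}}\norm{h}_{B^{s_1,s_2}}+\norm{f}_{B^{s_1,s_2}}\big)\norm{\dk[h]}_2.
\end{align*}

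Next I would cancel one factor of $\norm{\dk[h]}_2$, integrate in time, multiply by the appropriate weight $2^{k\varphi^{s_1,s_2}(k)}$ and sum over $k\in\Z$ to arrive at
\begin{align*}
  \norm{h}_{\LL([0,t];B^{s_1,s_2})}\ls\norm{h_0}_{B^{s_1,s_2}}+C\int_0^t\big(\norm{\vv(\tau)}_{B^{d/2+1}}\norm{h(\tau)}_{B^{s_1,s_2}}+\norm{f(\tau)}_{B^{s_1,s_2}}\big)d\tau.
\end{align*}
Applying Gronwall's inequality in the form that produces the prefactor $e^{C\int_0^t\norm{\vv}_{B^{d/2+1}}d\tau}$ then yields the claimed bound. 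The point where one must be a little careful—more delicate than in \prettyref{prop.2}, which only treats a single homogeneous index—is that the commutator/product estimate must be valid uniformly for the hybrid space $B^{s_1,s_2}$ with both indices in the stated range $(-d/2,d/2+1]$; this is exactly the content of the hybrid-Besov versions of \lemref{lem.fgbesov} and the commutator lemma recorded in the appendix, so it is "routine" only in the sense that the needed machinery has been set up. There is no genuine obstacle here: since there is no $\theta(h+\beta)\dive\vv$ term the estimate is actually simpler than that of \prettyref{prop.2}, and the whole argument is a direct transcription of Step~1 of the proof of \lemref{proposition} with $n$ replaced by $h$ and a source term $f$ added. I would therefore just indicate the modifications and refer back to that computation rather than reproduce it in full.
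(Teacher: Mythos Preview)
Your proposal is correct and matches the paper's own treatment: the paper does not give a separate proof of this proposition but simply says it follows ``in a similar way with Proposition~\ref{prop.2},'' and the argument you outline (apply $\dk$, take the $L^2$ inner product, use Lemma~\ref{lem.innner} for the convection term, integrate and sum, then Gronwall) is exactly the computation carried out in Step~1 of the proof of Proposition~\ref{proposition} with a source term added. There is nothing to add.
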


\subsection{Existence of local solution}

\emph{Step 1: The Friedrich's approximation.} For convenience, we
introduce the solution $\uu_{\mathrm{ls}}$ to the linear system
\begin{align}\label{eq.ls}
  \partial_t\uu_{\mathrm{ls}}-\mu\Delta\uu_{\mathrm{ls}}-(\mu+\lambda) \nabla\dive\uu_{\mathrm{ls}}=0, \quad \uu_{\mathrm{ls}}(0)=\uu_0.
\end{align}
Denote $\uu_{\mathrm{ls}}^\ell:=\J\uu_{\mathrm{ls}}$ and  $\tilde{\uu}^\ell:=\uu^\ell-\uu_{\mathrm{ls}}^\ell$. Then we can construct the following approximation $(\rho^\ell,g^\ell,\tilde{\uu}^\ell)$ satisfying
\begin{align}\label{eq.system.app4}
  \left\{\begin{aligned}
    &\rho_t^\ell+\J(\uu^\ell\cdot\nabla\rho^\ell)=\J((\rho^\ell+1)\dive\uu^\ell),\\
    &g_t^\ell+\J(\uu^\ell\cdot\nabla g^\ell)=-\J((g^\ell+\nb)\dive\uu^\ell),\\
    &\partial_t\tilde{\uu}^\ell+\J(\uu_{\mathrm{ls}}^\ell\cdot\nabla \tilde{\uu}^\ell)+\J(\tilde{\uu}^\ell\cdot\nabla \uu^\ell)-\J[(1+\rho^\ell)(\mu\Delta\tilde{\uu}^\ell +(\mu+\lambda)\nabla \dive\tilde{\uu}^\ell)]\\
    &\qquad=\J[\rho^\ell(\mu\Delta\uu_{\mathrm{ls}}^\ell +(\mu+\lambda)\nabla\dive\uu_{\mathrm{ls}}^\ell)] -\J(\uu_{\mathrm{ls}}^\ell\cdot\nabla \uu_{\mathrm{ls}}^\ell)-\J Q(\rho^\ell,g^\ell),\\
    &(\rho^\ell,g^\ell,\tilde{\uu}^\ell)|_{t=0}=(\rho_0^\ell,g_0^\ell,\mathbf{0}),
  \end{aligned}\right.
\end{align}
where $\rho_0^\ell:=\J\rho_0$, $g_0^\ell:=\J g_0$ and
$\uu^\ell:=\uu_{\mathrm{ls}}^\ell+\tilde{\uu}^\ell$.

Note that if $1+\rho_0$ is bounded away from zero, then so is $1+\J
\rho_0$ for sufficiently large $\ell$. It is easy to check that
\eqref{eq.system.app4} is an ordinary differential equation in
$L_\ell^2\times L_\ell^2 \times (L_\ell^2)^d$ for every $\ell\in\N$.
By the usual Cauchy-Lipschitz theorem, there is a strictly positive
maximal time $T_\ell^*$ such that a unique solution
$(\rho^\ell,g^\ell,\tilde{\uu}^\ell)$ exists in $[0,T_\ell^*)$ which
is continuous in time with value in $L_\ell^2\times L_\ell^2 \times
(L_\ell^2)^d$, i.e. $(\rho^\ell,g^\ell,\tilde{\uu}^\ell)\in
\mathcal{C}([0,T_\ell^*); L_\ell^2\times L_\ell^2 \times
(L_\ell^2)^d)$, and $1+\rho^\ell$ is bounded away from zero.

\emph{Step 2: Lower bound for lifespan and uniform estimates.} We
introduce the following notations
\begin{align*}
  M_0:=&\norm{\rho_0}_{B^{d/2,d/2+\alpha}},\quad M^\ell(t):=\norm{\rho^\ell}_{\LL([0,t];B^{d/2,d/2+\alpha})},\\
  N_0:=&\norm{g_0}_{B^{d/2,d/2+\alpha}},\quad N^\ell(t):=\norm{g^\ell}_{\LL([0,t];B^{d/2,d/2+\alpha})},\\
  U_0:=&\norm{\uu_0}_{B^{d/2-1,d/2-1+\alpha}},\quad U_{\mathrm{ls}}^\ell(t):=\norm{\uu_{\mathrm{ls}}^\ell}_{L^1([0,t]; B^{d/2+1,d/2+1+\alpha})},\\
  \tilde{U}^\ell(t):=&\norm{\tilde{\uu}^\ell}_{\LL([0,t];B^{d/2-1,d/2-1+\alpha})} +b_*\underline{\nu}\norm{\tilde{\uu}^\ell}_{L^1([0,t];B^{d/2+1,d/2+1+\alpha})}.
\end{align*}

In view of Lemma \ref{lem.moment}, we take $\vv=\ww=f=\mathbf{0}$ and $\rho=0$ there to get
\begin{align*}
  \norm{\uu_{\mathrm{ls}}^\ell}_{\LL([0,t];B^{d/2-1,d/2-1+\alpha})}\lesssim U_0.
\end{align*}

Note that for all $k\in\Z$, we have
\begin{align*}
  \norm{\dk \rho_0^\ell}_2\ls\norm{\dk \rho_0}_2,\quad  \norm{\rho_0^\ell}_{B^{s}}\ls\norm{\rho_0}_{B^{s}}
\end{align*}
and similar properties for $g_0^\ell$ because of the boundedness of the operators $\dk$.

From \eqref{eq.mass.1} and \eqref{eq.mass.1a}, we get
\begin{align}\label{eq.mass.5}
      \norm{\rho^\ell}_{\LL([0,t];B^{d/2})}\ls e^{3C\int_0^t\norm{\uu^\ell}_{B^{d/2+1}}d\tau} \left(\norm{\rho_0}_{B^{d/2}}+\frac{1}{3}\right) -\frac{1}{3},
\end{align}
\begin{align}  \label{eq.mass.6}
    &\norm{\rho^\ell}_{\LL([0,t];B^{d/2+\alpha})}
    \ls e^{2C\int_0^t\norm{\uu^\ell}_{B^{d/2+1}}d\tau} \Big(\norm{\rho_0}_{B^{d/2+\alpha}}\nonumber \\ &+C\left[e^{3C\int_0^t\norm{\uu^\ell}_{B^{d/2+1}}d\tau} \Big(\norm{\rho_0}_{B^{d/2}}+\frac{1}{3}\Big) +\frac{2}{3}\right] \int_0^t\norm{\uu^\ell}_{B^{d/2+1+\alpha}}d\tau\Big),
\end{align}
\begin{align}\label{eq.mass.7}
      \norm{g^\ell}_{\LL([0,t];B^{d/2})}\ls e^{3C\int_0^t\norm{\uu^\ell}_{B^{d/2+1}}d\tau} \left(\norm{g_0}_{B^{d/2}}+\frac{\nb}{3}\right) -\frac{\nb}{3},
\end{align}
and
\begin{align}  \label{eq.mass.8}
    &\norm{g^\ell}_{\LL([0,t];B^{d/2+\alpha})}
    \ls e^{2C\int_0^t\norm{\uu^\ell}_{B^{d/2+1}}d\tau} \Big(\norm{g_0}_{B^{d/2+\alpha}}\nonumber \\ &+C\left[e^{3C\int_0^t\norm{\uu^\ell}_{B^{d/2+1}}d\tau} \Big(\norm{g_0}_{B^{d/2}}+\frac{\nb}{3}\Big) +\frac{2\nb}{3}\right] \int_0^t\norm{\uu^\ell}_{B^{d/2+1+\alpha}}d\tau\Big).
\end{align}

Let $b(t,x)=1+\rho^\ell$. From Step 1, we know $b_*>0$. Thus, by Lemma \ref{lem.moment}, \ref{lem.comp} and \ref{lem.fgbesov}, we have
\begin{align}\label{eq.ns.9}
\begin{aligned}
  &\norm{\tilde{\uu}^\ell}_{\LL([0,t];B^{d/2-1})}+\kappa b_*\underline{\nu} \norm{\tilde{\uu}^\ell}_{L^1([0,t];B^{d/2+1})}\\
  \ls &\int_0^t\Big(\norm{\rho^\ell(\mu\Delta\uu_{\mathrm{ls}}^\ell +(\mu+\lambda)\nabla\dive\uu_{\mathrm{ls}}^\ell)}_{B^{d/2-1}} \\ &\qquad\qquad+\norm{\uu_{\mathrm{ls}}^\ell\cdot\nabla \uu_{\mathrm{ls}}^\ell}_{B^{d/2-1}} +\norm{Q(\rho^\ell,g^\ell)}_{B^{d/2-1}}\Big)d\tau \\ &
  \times\exp\left(C\int_0^t\Big(\norm{\uu^\ell}_{B^{d/2+1}}+ \norm{\uu_{\mathrm{ls}}^\ell}_{B^{d/2+1}} +(b_*\underline{\nu})^{1-2/\alpha}\bar{\nu}^{2/\alpha} \norm{\rho^\ell}_{B^{d/2+\alpha}}^{2/\alpha}\Big)d\tau\right),
\end{aligned}
\end{align}
and similarly
\begin{align}\label{eq.ns.10}
\begin{aligned}
  &\norm{\tilde{\uu}^\ell}_{\LL([0,t];B^{d/2-1+\alpha})}+\kappa b_*\underline{\nu} \norm{\tilde{\uu}^\ell}_{L^1([0,t];B^{d/2+1+\alpha})}\\
  \ls &\int_0^t\Big(\norm{\rho^\ell(\mu\Delta\uu_{\mathrm{ls}}^\ell +(\mu+\lambda)\nabla\dive\uu_{\mathrm{ls}}^\ell)}_{B^{d/2-1+\alpha}} \\ &\qquad\qquad+\norm{\uu_{\mathrm{ls}}^\ell\cdot\nabla \uu_{\mathrm{ls}}^\ell}_{B^{d/2-1+\alpha}} +\norm{Q(\rho^\ell,g^\ell)}_{B^{d/2-1+\alpha}}\Big)d\tau \\ &
  \times\exp\left(C\int_0^t\Big(\norm{\uu^\ell}_{B^{d/2+1}}+ \norm{\uu_{\mathrm{ls}}^\ell}_{B^{d/2+1}} +(b_*\underline{\nu})^{1-2/\alpha}\bar{\nu}^{2/\alpha} \norm{\rho^\ell}_{B^{d/2+\alpha}}^{2/\alpha}\Big)d\tau\right).
  \end{aligned}
\end{align}

By Lemma \ref{lem.fgbesov}, we get, for all $\sigma\in\{0,\alpha\}$, that
\begin{align*}
  &\norm{\rho^\ell\Delta\uu_{\mathrm{ls}}^\ell}_{B^{d/2-1+\sigma}}\lesssim \norm{\rho^\ell}_{B^{d/2}}\norm{\Delta\uu_{\mathrm{ls}}^\ell}_{B^{d/2-1+\sigma}}
  \lesssim \norm{\rho^\ell}_{B^{d/2}}\norm{\uu_{\mathrm{ls}}^\ell}_{B^{d/2+1+\sigma}},\\
  &\norm{\rho^\ell\nabla\dive\uu_{\mathrm{ls}}^\ell}_{B^{d/2-1+\sigma}}\lesssim \norm{\rho^\ell}_{B^{d/2}}\norm{\uu_{\mathrm{ls}}^\ell}_{B^{d/2+1+\sigma}},\\
  &\norm{\uu_{\mathrm{ls}}^\ell\cdot\nabla\uu_{\mathrm{ls}}^\ell}_{B^{d/2-1+\sigma}}
  \lesssim \norm{\uu_{\mathrm{ls}}^\ell}_{B^{d/2}} \norm{\nabla\uu_{\mathrm{ls}}^\ell}_{B^{d/2-1+\sigma}} \lesssim \norm{\uu_{\mathrm{ls}}^\ell}_{B^{d/2}} \norm{\uu_{\mathrm{ls}}^\ell}_{B^{d/2+\sigma}}.
\end{align*}
Recall that
\begin{align*}
  Q(\rho^\ell,g^\ell)
  =&\frac{\rho^\ell\nabla \rho^\ell}{\rho^\ell+1}-\nabla\rho^\ell+\frac{a_0}{\mb}(\rho^\ell+1)\nabla g^\ell+B(\rho^\ell,g^\ell)\Big[-\frac{\mb}{(\rho^\ell+1)^2}\nabla\rho^\ell\\
  &+\frac{k_0-a_0\nb}{\rho^\ell+1}\nabla\rho^\ell+\frac{a_0g^\ell\nabla\rho^\ell}{\rho^\ell+1} +\frac{a_0^2}{\mb}(g^\ell+g^\ell\rho^\ell)\nabla g^\ell+\frac{a_0(k_0+\mb+a_0\nb)}{\mb}\nabla g^\ell\\
  &+\frac{a_0k_0+a_0^2\nb}{\mb}\rho^\ell\nabla g^\ell\Big],
\end{align*}
where
\begin{align*}
  B(\rho^\ell,g^\ell):=\Big[\Big(\frac{\mb}{\rho^\ell+1}+a_0(g^\ell+\nb) -k_0\Big)^2+4k_0a_0(g^\ell+\nb)\Big]^{-1/2}.
\end{align*}
Similarly, for the third term of $Q$
\begin{align*}
  \norm{(\rho^\ell+1)\nabla g^\ell}_{B^{d/2-1+\sigma}}\lesssim (\norm{\rho^\ell}_{B^{d/2}}+1)\norm{\nabla g^\ell}_{B^{d/2-1+\sigma}}
  \lesssim (\norm{\rho^\ell}_{B^{d/2}}+1)\norm{g^\ell}_{B^{d/2+\sigma}}.
\end{align*}

By Lemmas \ref{lem.fgbesov} and \ref{lem.comp}, we get for the first two terms of $Q$
\begin{align*}
  \lnorm{\frac{\rho^\ell\nabla \rho^\ell}{\rho^\ell+1}-\nabla\rho^\ell}_{B^{d/2-1+\sigma}} \lesssim& \left(\lnorm{\frac{\rho^\ell}{\rho^\ell+1}}_{B^{d/2}}+1\right)\norm{\nabla \rho^\ell}_{B^{d/2-1+\sigma}}\\
  \lesssim& (\norm{\rho^\ell}_{B^{d/2}}+1)\norm{\rho^\ell}_{B^{d/2+\sigma}},
\end{align*}
and
\begin{align*}
  &\lnorm{B(\rho^\ell,g^\ell)\frac{\nabla\rho^\ell}{(\rho^\ell+1)^2}}_{B^{d/2-1+\sigma}}\\
  \lesssim& \lnorm{[B(\rho^\ell,g^\ell)-B(0,0)]\frac{\nabla\rho^\ell}{ (\rho^\ell+1)^2}}_{B^{d/2-1+\sigma}}+B(0,0)\lnorm{\frac{\nabla\rho^\ell}{ (\rho^\ell+1)^2}}_{B^{d/2-1+\sigma}}\\
  \lesssim&\left(\norm{B(\rho^\ell,g^\ell)-B(0,0)}_{B^{d/2}}+1\right) \left(\lnorm{\frac{\rho^\ell\rho^\ell+2\rho^\ell}{(\rho^\ell+1)^2}}_{B^{d/2}}+1\right) \norm{\nabla\rho^\ell}_{B^{d/2-1+\sigma}}\\
  \lesssim& \left(\norm{\rho^\ell}_{B^{d/2}}+\norm{g^\ell}_{B^{d/2}}+1\right)^2 \norm{\rho^\ell}_{B^{d/2+\sigma}}.
\end{align*}
Similarly,
\begin{align*}
  &\lnorm{B(\rho^\ell,g^\ell)\frac{\nabla \rho^\ell}{\rho^\ell+1}}_{B^{d/2-1+\sigma}}
  \lesssim \left(\norm{\rho^\ell}_{B^{d/2}}+\norm{g^\ell}_{B^{d/2}}+1\right)^2 \norm{\rho^\ell}_{B^{d/2+\sigma}},\\
  &\lnorm{B(\rho^\ell,g^\ell)\frac{g^\ell\nabla \rho^\ell}{\rho^\ell+1}}_{B^{d/2-1+\sigma}}
  \lesssim \left(\norm{\rho^\ell}_{B^{d/2}}+\norm{g^\ell}_{B^{d/2}}+1\right)^2 \norm{g^\ell}_{B^{d/2}}\norm{\rho^\ell}_{B^{d/2+\sigma}},\\
  &\norm{B(\rho^\ell,g^\ell)(1+\rho^\ell)g^\ell\nabla g^\ell}_{B^{d/2-1+\sigma}}
  \lesssim \left(\norm{\rho^\ell}_{B^{d/2}}+\norm{g^\ell}_{B^{d/2}}+1\right)^2 \norm{g^\ell}_{B^{d/2}}\norm{g^\ell}_{B^{d/2+\sigma}},\\
  &\norm{B(\rho^\ell,g^\ell)\nabla g^\ell}_{B^{d/2-1+\sigma}}
  \lesssim \left(\norm{\rho^\ell}_{B^{d/2}}+\norm{g^\ell}_{B^{d/2}}+1\right) \norm{g^\ell}_{B^{d/2+\sigma}},\\
  &\norm{B(\rho^\ell,g^\ell)\rho^\ell\nabla g^\ell}_{B^{d/2-1+\sigma}}
  \lesssim \left(\norm{\rho^\ell}_{B^{d/2}}+\norm{g^\ell}_{B^{d/2}}+1\right)
  \norm{\rho^\ell}_{B^{d/2}}\norm{g^\ell}_{B^{d/2+\sigma}}.
\end{align*}
Thus, we get
\begin{align*}
  \norm{Q(\rho^\ell,g^\ell)}_{B^{d/2-1+\sigma}}
  \lesssim \left(\norm{\rho^\ell}_{B^{d/2}}+\norm{g^\ell}_{B^{d/2}}+1\right)^3
  \left(\norm{\rho^\ell}_{B^{d/2+\sigma}}+\norm{g^\ell}_{B^{d/2+\sigma}}\right).
\end{align*}

Therefore, from \eqref{eq.mass.5}-\eqref{eq.ns.10}, we conclude that
\begin{align*}
  M^\ell(T)\lesssim & e^{C(U_{\mathrm{ls}}^\ell(T)+\tilde{U}^\ell(T)/(b_*\underline{\nu}))}(M_0+\frac{1}{3})\\ &\qquad+e^{C(U_{\mathrm{ls}}^\ell(T) +\tilde{U}^\ell(T)/(b_*\underline{\nu}))}(M_0+\frac{2}{3}) (U_{\mathrm{ls}}^\ell(T)+\tilde{U}^\ell(T)/(b_*\underline{\nu}))-\frac{1}{3},\\
  N^\ell(T)\lesssim & e^{C(U_{\mathrm{ls}}^\ell(T)+\tilde{U}^\ell(T)/(b_*\underline{\nu}))}(N_0+\frac{\nb}{3})\\ &\qquad+e^{C(U_{\mathrm{ls}}^\ell(T)+\tilde{U}^\ell(T)/(b_*\underline{\nu}))}(N_0+\frac{2\nb}{3}) (U_{\mathrm{ls}}^\ell(T)+\tilde{U}^\ell(T)/(b_*\underline{\nu}))-\frac{\nb}{3},\\
  \tilde{U}^\ell(T)\lesssim &\left((\bar{\nu}M^\ell(T)+U_0)U_{\mathrm{ls}}^\ell(T)+(M^\ell(T)+N^\ell(T)+1)^3(M^\ell(T) +N^\ell(T))T\right)\\
  &\qquad\times  e^{C[U_{\mathrm{ls}}^\ell(T)+\tilde{U}^\ell(T)/(b_*\underline{\nu}) +(b_*\underline{\nu})^{1-2/\alpha}\bar{\nu}^{2/\alpha}(M^\ell(T))^{2/\alpha}T]}.
\end{align*}
Now, if we take $T$ so small that
\begin{align*}
  \exp\left(CU_{\mathrm{ls}}^\ell(T)\right)\ls \sqrt{2}, \quad \exp\left(\frac{C\tilde{U}^\ell(T)}{b_*\underline{\nu}}\right)\ls\sqrt{2},
  \end{align*}
 and
 \begin{align*}
  \exp\left(C(b_*\underline{\nu})^{1-2/\alpha}\bar{\nu}^{2/\alpha} (M^\ell(T))^{2/\alpha}T\right)\ls 2,
\end{align*}
then we have
\begin{align}\label{eq.uni.1}
\left\{\begin{aligned}
  &M^\ell(T)\ls 4M_0+\frac{5}{3}, \quad N^\ell(T)\ls 4N_0+\frac{5\nb}{3}, \\
  &\tilde{U}^\ell(T)\ls C\left((M_0+N_0+1)^4(T+\bar{\nu}U_{\mathrm{ls}}^\ell(T))+U_0 U_{\mathrm{ls}}^\ell(T)\right).
\end{aligned}\right.
\end{align}

Noticing that $(\J \rho^\ell,\J g^\ell,\J\tilde{\uu}^\ell)=(\rho^\ell, g^\ell,\tilde{\uu}^\ell)$ by the construction of the approximated system. Thus, we have
\begin{align*}
  \partial_t(1+\rho^\ell)^{\pm 1}+\J(\uu^\ell\cdot\nabla(1+\rho^\ell)^{\pm 1})\pm\J((1+\rho^\ell)^{\pm } \dive \uu^\ell)=0.
\end{align*}
It follows, by noticing that $\abs{\partial_t\abs{f}}=\abs{\partial_t f}$, that
\begin{align*}
  \norm{(1+\rho^\ell)^{\pm 1}(t)}_\infty \ls&\norm{(1+\rho_0^\ell)^{\pm 1}}_\infty\\
  &+\int_0^t [\norm{\uu^\ell\cdot\nabla(1+\rho^\ell)^{\pm 1}}_\infty+\norm{(1+\rho^\ell)^{\pm } \dive \uu^\ell}_\infty] d\tau,
\end{align*}
which yields, by the Gronwall inequality, that
\begin{align*}
  &\norm{(1+\rho^\ell)^{\pm 1}(t)}_\infty
  \ls e^{\int_0^t\norm{\dive\uu^\ell}_\infty d\tau}\left(\norm{(1+\rho_0^\ell)^{\pm 1}}_\infty+\int_0^t\norm{\uu^\ell}_\infty\norm{\nabla \rho^\ell}_\infty d\tau\right)\\
  \ls&e^{\int_0^t\norm{\dive\uu^\ell}_\infty d\tau}\left(\norm{(1+\rho_0^\ell)^{\pm 1}}_\infty+C\int_0^t(\norm{\uu_{\mathrm{ls}}^\ell}_{B^{d/2}}+\norm{\tilde{\uu}^\ell}_{B^{d/2}}) \norm{\rho^\ell}_{B^{d/2+1}}d\tau\right)\\
  \ls&e^{\int_0^t\norm{\dive\uu^\ell}_\infty d\tau}\left(\norm{(1+\rho_0^\ell)^{\pm 1}}_\infty+CT(U_0+\tilde{U}^\ell(T))M^\ell(T)\right),
\end{align*}
where we have to choose $\alpha=1$ in the previous estimates. Hence, if we assume that there exist two positive constants $b_*$ and $b^*$ such that
\begin{align*}
  b_*\ls 1+\rho_0\ls b^*,
\end{align*}
then we can take $T$ small enough such that
\begin{align*}
  \int_0^T\norm{\dive\uu^\ell}_\infty d\tau\ls \ln 2, \text{ and } CT(U_0+\tilde{U}^\ell(T))M^\ell(T)\ls 1,
\end{align*}
and so
\begin{align}\label{eq.uni.2}
  \frac{b_*}{2(1+b_*)}\ls 1+\rho^\ell\ls 2(b^*+1).
\end{align}
Now, by means of a bootstrap argument, we can get that there exist two constants $\eta$ and $C$ depending only on $d$ such that if
\begin{align}\label{eq.uni.3}
  \left\{\begin{aligned}
    &(b_*\underline{\nu})^{1-2/\alpha}\bar{\nu}^{2/\alpha} (M^\ell(T))^{2/\alpha}T\ls \eta,\\
    &(M_0+N_0+1)^4(T+\bar{\nu}U_{\mathrm{ls}}^\ell(T))+U_0 U_{\mathrm{ls}}^\ell(T)\ls \eta b_*\underline{\nu},
  \end{aligned}\right.
\end{align}
then we have \eqref{eq.uni.1} and \eqref{eq.uni.2}.

Therefore, $T_\ell^*$ may be bounded from below by any time $T$ satisfying \eqref{eq.uni.3}, and the inequalities \eqref{eq.uni.1} and \eqref{eq.uni.2} are satisfied by $(\rho^\ell,g^\ell,\uu^\ell)$. In particular, $(\rho^\ell,g^\ell,\uu^\ell)_{\ell\in\N}$ is bounded in $F_T^1$.

\emph{Step 3: Time derivatives.} In order to pass to the limit in
the approximated system, we first give the following lemma.
\begin{lemma}\label{lem.timelocal}
  Let $\tilde{\rho}^\ell:=\rho^\ell-\J\rho_0$, $\tilde{g}^\ell:=g^\ell-\J g_0$. Then the sequences $(\tilde{\rho}^\ell)_{\ell\in\N}$ and $(\tilde{g}^\ell)_{\ell\in\N}$ are uniformly bounded in
  $$\mathcal{C}([0,T]; B^{d/2,d/2+1})\cap \mathcal{C}^{1/2}([0,T];B^{d/2-1,d/2}),$$
  and the sequence $(\tilde{\uu}^\ell)_{\ell\in\N}$ is uniformly bounded in
  $$(\mathcal{C}([0,T]; B^{d/2-1,d/2})\cap \mathcal{C}^{1/4}([0,T];B^{d/2-1,d/2}+B^{d/2-3/2,d/2-1/2}))^d.$$
\end{lemma}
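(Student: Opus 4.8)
The plan is to bound each time derivative directly from the approximated system \eqref{eq.system.app4} and then invoke the Morrey embedding $W^{1,p}(\R)\hookrightarrow C^{1-1/p}(\R)$ in the time variable (already used in the proof of Lemma \ref{lem.time}). First I would treat the densities. Since $\partial_t\tilde{\rho}^\ell=\partial_t\rho^\ell=-\J(\uu^\ell\cdot\nabla\rho^\ell)+\J((\rho^\ell+1)\dive\uu^\ell)$, and by the uniform bounds \eqref{eq.uni.1} the pair $(\rho^\ell,g^\ell)$ stays bounded in $\LL([0,T];B^{d/2,d/2+1})$ while $\uu^\ell$ stays bounded in $L^1([0,T];B^{d/2+1,d/2+2})$, the product and composition estimates (Lemmas \ref{lem.comp} and \ref{lem.fgbesov}) give $\uu^\ell\cdot\nabla\rho^\ell,\ (\rho^\ell+1)\dive\uu^\ell\in L^1([0,T];B^{d/2-1,d/2})$. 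In fact, using instead that $\uu^\ell\in L^2([0,T];B^{d/2,d/2+1})$ by interpolation between $\LL([0,T];B^{d/2-1,d/2})$ and $L^1([0,T];B^{d/2+1,d/2+2})$, one upgrades this to $\partial_t\rho^\ell\in L^2([0,T];B^{d/2-1,d/2})$, hence $\tilde{\rho}^\ell\in W^{1,2}([0,T];B^{d/2-1,d/2})\hookrightarrow \mathcal{C}^{1/2}([0,T];B^{d/2-1,d/2})$. The same argument applies verbatim to $\tilde{g}^\ell$ using the second equation of \eqref{eq.system.app4}. Membership of $\rho^\ell,g^\ell$ in $\mathcal{C}([0,T];B^{d/2,d/2+1})$ then follows from the transport estimates (Proposition \ref{prop.3}) together with the time-continuity statements analogous to Step 5 of the global theory.

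For the velocity I would write $\partial_t\tilde{\uu}^\ell$ from the third line of \eqref{eq.system.app4} as the sum of five kinds of terms: the two convection terms $\J(\uu_{\mathrm{ls}}^\ell\cdot\nabla\tilde{\uu}^\ell)$ and $\J(\tilde{\uu}^\ell\cdot\nabla\uu^\ell)$; the viscous term $\J[(1+\rho^\ell)(\mu\Delta\tilde{\uu}^\ell+(\mu+\lambda)\nabla\dive\tilde{\uu}^\ell)]$; the source terms $\J[\rho^\ell(\mu\Delta\uu_{\mathrm{ls}}^\ell+(\mu+\lambda)\nabla\dive\uu_{\mathrm{ls}}^\ell)]$ and $\J(\uu_{\mathrm{ls}}^\ell\cdot\nabla\uu_{\mathrm{ls}}^\ell)$; and the pressure term $\J Q(\rho^\ell,g^\ell)$. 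The viscous term costs two derivatives: with $\tilde{\uu}^\ell\in L^1([0,T];B^{d/2+1,d/2+2})$ it lies in $L^1([0,T];B^{d/2-1,d/2})$, and this is the worst loss, so at the level of $B^{d/2-1,d/2}$ only $L^1$-in-time integrability is available for that piece. The convection terms and the $Q$-term are, by the same product/composition bounds used to prove \eqref{eq.uni.1}, in $L^2([0,T];B^{d/2-1,d/2})$, and the two $\uu_{\mathrm{ls}}^\ell$-source terms are in $L^1([0,T];B^{d/2-1,d/2})$ as well (with a little room to spare in time, since $\uu_{\mathrm{ls}}^\ell$ is smoothing). To avoid the endpoint loss in space I would place the viscous term one notch lower, namely in $L^1([0,T];B^{d/2-1,d/2})+L^1([0,T];B^{d/2-3/2,d/2-1/2})$ after using that $\tilde{\uu}^\ell\in L^{4/3}([0,T];B^{d/2+1/2,d/2+3/2})$ by interpolation; this is exactly the space appearing in the statement. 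Collecting, $\partial_t\tilde{\uu}^\ell\in L^{4/3}([0,T];B^{d/2-1,d/2}+B^{d/2-3/2,d/2-1/2})$ uniformly in $\ell$, whence $W^{1,4/3}$ in time and the Morrey embedding give uniform boundedness in $\mathcal{C}^{1/4}([0,T];B^{d/2-1,d/2}+B^{d/2-3/2,d/2-1/2})$; boundedness in $\mathcal{C}([0,T];B^{d/2-1,d/2})$ is the first line of \eqref{eq.uni.1} together with the time-continuity of $\tilde{\uu}^\ell$, which follows since $\partial_t\tilde{\uu}^\ell\in (L^1+L^{4/3})([0,T];B^{d/2-3/2,d/2-1/2})$ and $W^{1,1}(\R)\hookrightarrow\mathcal{C}(\R)$.

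The main obstacle is purely bookkeeping: one must verify, term by term in $\partial_t\tilde{\uu}^\ell$ and in $\partial_t\rho^\ell$, $\partial_t g^\ell$, that every product and every composition $B(\rho^\ell,g^\ell)$, $\rho^\ell/(\rho^\ell+1)$, etc.\ lands in the claimed space with a bound uniform in $\ell$; this uses the lower bound $1+\rho^\ell\gs b_*/(2(1+b_*))>0$ from \eqref{eq.uni.2}, the uniform estimates \eqref{eq.uni.1}, and the hybrid-Besov product and composition lemmas, exactly as in the derivation of \eqref{eq.uni.1}, the only new ingredient being the choice of interpolation exponents ($L^2$, $L^{4/3}$) in time needed to get a Hölder-in-time gain. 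Since this is just a matter of routine verification, I would carry it out for the representative worst terms — the viscous term $(1+\rho^\ell)\Delta\tilde{\uu}^\ell$ and the quadratic convection $\tilde{\uu}^\ell\cdot\nabla\uu^\ell$ — and omit the remaining analogous estimates.
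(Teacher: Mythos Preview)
Your proposal is correct and follows essentially the same route as the paper: bound $\partial_t\tilde\rho^\ell,\partial_t\tilde g^\ell\in L^2([0,T];B^{d/2-1,d/2})$ via interpolation on $\uu^\ell$, split $\partial_t\tilde\uu^\ell$ into convection, viscous, and pressure pieces, and apply Morrey in time. The only cosmetic difference is bookkeeping: the paper combines all convection into the single term $\uu^\ell\cdot\nabla\uu^\ell\in L^2([0,T];B^{d/2-1,d/2})$, places \emph{both} viscous contributions $(1+\rho^\ell)\Delta\tilde\uu^\ell$ and $\rho^\ell\Delta\uu_{\mathrm{ls}}^\ell$ directly in $L^{4/3}([0,T];B^{d/2-3/2,d/2-1/2})$ via the same interpolation $\uu_{\mathrm{ls}}^\ell,\tilde\uu^\ell\in L^{4/3}([0,T];B^{d/2+1/2,d/2+3/2})$, and puts $Q(\rho^\ell,g^\ell)$ in $L^\infty([0,T];B^{d/2-1,d/2})$; this makes your ``$L^1$ with room to spare'' remark for the $\uu_{\mathrm{ls}}^\ell$-source terms precise and avoids any ambiguity when collecting into $L^{4/3}$.
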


\begin{proof}
From $\partial_t\tilde{\rho}^\ell=-\J(\uu^\ell\cdot\nabla \rho^\ell)+\J((\rho^\ell+1)\dive\uu^\ell)$, we have
\begin{align*}
    &\norm{\partial_t\tilde{\rho}^\ell}_{L^2([0,T];B^{d/2-1,d/2})} \ls\norm{\partial_t\tilde{\rho}^\ell}_{\LL[2]([0,T];B^{d/2-1,d/2})}\\
    \ls& \norm{\uu^\ell\cdot\nabla \rho^\ell}_{\LL[2]([0,T];B^{d/2-1,d/2})} +\norm{(\rho^\ell+1)\dive\uu^\ell}_{\LL[2]([0,T];B^{d/2-1,d/2})}\\
    \lesssim &\norm{\uu^\ell}_{\LL[2]([0,T];B^{d/2})}\norm{\rho^\ell}_{\LL([0,T];B^{d/2,d/2+1})} \\ &+(\norm{\rho^\ell}_{\LL([0,T];B^{d/2})}+1)\norm{\uu^\ell}_{\LL[2]([0,T];B^{d/2,d/2+1})}.
\end{align*}
Since $(\uu^\ell)_{\ell\in\N}$ is uniformly bounded in
$$\LL([0,T];B^{d/2-1,d/2})\cap L^1([0,T];B^{d/2+1,d/2+2}),$$
it is also bounded in $\LL[2]([0,T];B^{d/2,d/2+1})$ by Lemma \ref{lem.inter}. Recall that $(\rho^\ell)_{\ell\in\N}$ is uniformly bounded in $\LL([0,T];B^{d/2,d/2+1})$, then $(\partial_t\tilde{\rho}^\ell)_{\ell\in\N}$ is uniformly bounded in $$L^2([0,T];B^{d/2-1,d/2}),$$
and so $(\tilde{\rho}^\ell)_{\ell\in\N}$ is uniformly bounded in $$\mathcal{C}^{1/2}([0,T];B^{d/2-1,d/2}) \text{ and in } \mathcal{C}([0,T]; B^{d/2,d/2-1}).$$ Similarly, we have the same arguments for $\tilde{g}^\ell$.

Recall that
\begin{align*}
\partial_t\tilde{\uu}^\ell=&-\J(\uu^\ell\cdot\nabla \uu^\ell)+\J[(1+\rho^\ell)(\mu\Delta\tilde{\uu}^\ell +(\mu+\lambda)\nabla \dive\tilde{\uu}^\ell)]\\
    &\qquad+\J[\rho^\ell(\mu\Delta\uu_{\mathrm{ls}}^\ell +(\mu+\lambda)\nabla\dive\uu_{\mathrm{ls}}^\ell)]-\J Q(\rho^\ell,g^\ell).
\end{align*}
Since
\begin{align*}
  &\norm{\uu^\ell\cdot\nabla\uu^\ell}_{L^2([0,T];B^{d/2-1,d/2})}
  \lesssim \norm{\uu^\ell}_{\LL([0,T];B^{d/2-1,d/2})}\norm{\uu^\ell}_{L^2([0,T];B^{d/2+1})},\\
  &\norm{(1+\rho^\ell)(\mu\Delta\tilde{\uu}^\ell +(\mu+\lambda)\nabla \dive\tilde{\uu}^\ell)}_{L^{4/3}([0,T];B^{d/2-3/2,d/2-1/2})}\\
  \lesssim& (1+\norm{\rho^\ell}_{\LL([0,T];B^{d/2})} ) \norm{\tilde{\uu}^\ell}_{L^{4/3}([0,T];B^{d/2+1/2,d/2+3/2})},\\
  &\norm{\rho^\ell(\mu\Delta\uu_{\mathrm{ls}}^\ell +(\mu+\lambda)\nabla\dive\uu_{\mathrm{ls}}^\ell)}_{L^{4/3}([0,T];B^{d/2-3/2,d/2-1/2})} \\ \lesssim& \norm{\rho^\ell}_{\LL([0,T];B^{d/2})} \norm{\uu_{\mathrm{ls}}^\ell}_{L^{4/3}([0,T];B^{d/2+1/2,d/2+3/2})},\\
  &\norm{Q(\rho^\ell,g^\ell)}_{\LL([0,T];B^{d/2-1,d/2})}\\
  \lesssim & \left(\norm{\rho^\ell}_{\LL([0,T];B^{d/2}}+\norm{g^\ell}_{\LL([0,T];B^{d/2})}+1\right)^3
  \left(\norm{\rho^\ell}_{\LL([0,T];B^{d/2,d/2+1})} \right.\\ &\left.\qquad+\norm{g^\ell}_{\LL([0,T];B^{d/2,d/2+1})}\right),
\end{align*}
by Lemma \ref{lem.inter}, we can conclude that $(\tilde{\uu}^\ell)_{\ell\in\N}$ is uniformly bounded in
$$\mathcal{C}^{1/4}([0,T];B^{d/2-1,d/2}+B^{d/2-3/2,d/2-1/2}) \text{ and in } \mathcal{C}([0,T]; B^{d/2-1,d/2}).$$
This completes the proof of the lemma.
\end{proof}

\emph{Step 4: Compactness and convergence.} The proof is based on the
Arzel\`a-Ascoli theorem and compact embeddings for Besov spaces.
Since it is similar to the arguments for global well-posedness, we
only give the outlines of the proof.

From Lemma \ref{lem.timelocal}, $(\tilde{\rho}^\ell)_{\ell\in\N}$ is uniformly bounded in the space
$$\LL([0,T];B^{d/2,d/2+1})$$
and equicontinuous on $[0,T]$ with values in $B^{d/2-1,d/2}$. Since the embedding $$B^{d/2-1,d/2}\hookrightarrow B^{d/2-1}$$
is (locally) compact, and $(\rho_0^\ell)_{\ell\in\N}$ tends to $\rho_0$ in $B^{d/2,d/2+1}$, we conclude that $(\rho^\ell)_{\ell\in\N}$ tends (up to an extraction) to some distribution $\rho$. Given that $(\rho^\ell)_{\ell\in\N}$ is uniformly bounded in $\LL([0,T];B^{d/2})$, we actually have $$\rho\in \LL([0,T];B^{d/2}).$$
 The same arguments are valid for the sequence $(g^\ell)_{\ell\in\N}$.

From the definition of $(\uu_{\mathrm{ls}}^\ell)_{\ell\in\N}$, it is clear that $(\uu_{\mathrm{ls}}^\ell)_{\ell\in\N}$ tends to the solution $\uu_{\mathrm{ls}}$ of \eqref{eq.ls} in $\LL([0,t];B^{d/2-1,d/2})\cap L^1([0,T]; B^{d/2+1,d/2+2})$.

Since $(\tilde{\uu}^\ell)_{\ell\in\N}$ is uniformly bounded in $\LL([0,T]; B^{d/2-1,d/2})$ and equicontinuous on $[0,T]$ with values in $B^{d/2-1,d/2}+B^{d/2-3/2,d/2-1/2}$, it enable us to conclude that $(\tilde{\uu}^\ell)_{\ell\in\N}$ converges, up to an extraction, to some function $\tilde{\uu}\in \LL([0,T];B^{d/2-1})\cap L^1([0,T];B^{d/2+1})$.

Thus, we can pass to the limit in the system \eqref{eq.system.app4} and setting $\uu:=\tilde{\uu}+\uu_{\mathrm{ls}}$. Then, $(\rho,g,\uu)$ satisfies the system \eqref{eq.system.2}.

\emph{Step 5: Continuities in time.}

From the first equation of \eqref{eq.system.2}, we get $\partial_t\rho\in L^2([0,T];B^{d/2-1,d/2})$ which implies $\rho\in \mathcal{C}([0,T];B^{d/2-1,d/2})$. So does $g$ in the same space. For $\uu$, we can derive, from the third equation of \eqref{eq.system.2}, that $\partial_t\uu\in (L^1+L^2)([0,T]; B^{d/2-1,d/2})$ which yields $\uu\in \mathcal{C}([0,T];B^{d/2-1,d/2})$.

\subsection{Uniqueness}

Let $(\rho_1,g_1,\uu_1)$ and $(\rho_2,g_2,\uu_2)$ be two solutions in $F_T^1$ of \eqref{eq.system.2} with the same initial data. Without loss of generality, we can assume that $(\rho_2,g_2,\uu_2)$ is the solution constructed in the previous subsection such that
$$1+\inf_{(t,x)\in[0,T]\times\R^d} \rho_2(t,x)>0.$$
We want to prove that $(\rho_2,g_2,\uu_2)\equiv (\rho_1,g_1,\uu_1)$ on $[0,T]\times\R^d$. To this goal, we shall estimate the discrepancy $(\delta\rho,\delta g,\delta\uu):=(\rho_2-\rho_1,g_2-g_1,\uu_2-\uu_1)$ with respect to a suitable norm, satisfying
\begin{align}\label{eq.sys.uni}
  \left\{\begin{aligned}
    &\partial_t\delta\rho+\uu_2\cdot\nabla\delta\rho+\delta\uu\cdot\nabla\rho_1 =\delta\rho\dive\uu_2+(\rho_1+1)\dive\delta\uu,\\
    &\partial_t\delta g+\uu_2\cdot\nabla\delta g+\delta\uu\cdot\nabla g_1 =-\delta g\dive\uu_2-(g_1+\nb)\dive\delta\uu,\\
    &\partial_t\delta\uu+\uu_2\cdot\nabla\delta\uu+\delta\uu\cdot\nabla\uu_1 -(1+\rho_2)(\mu\Delta \delta\uu_(\mu+\lambda)\nabla\dive\uu)\\
    &\qquad-\delta\rho(\mu\Delta\uu_1+(\mu+\lambda)\nabla\dive\uu_1)
    +Q(\rho_2,g_2)-Q(\rho_1,g_1)=0,\\
    &(\delta\rho,\delta g,\delta\uu)|_{t=0}=(0,0,\mathbf{0}).
  \end{aligned}\right.
\end{align}
We shall prove the uniqueness in a larger function space
\begin{align*}
  F_T:=(\mathcal{C}([0,T];B^{d/2}))^{1+1}\times(\mathcal{C}([0,T];B^{d/2})\cap L^1([0,T]; B^{d/2+1}))^d.
\end{align*}

By Proposition \ref{prop.3}, we get for all $T'\in[0,T]$
\begin{align*}
  &\norm{\delta\rho}_{\LL([0,T'];B^{d/2})}\\
  \ls& e^{C\norm{\uu_2}_{L^1([0,T'];B^{d/2+1})}}\int_0^{T'}\Big(\norm{\delta\uu\cdot\nabla \rho_1}_{B^{d/2}}+\norm{\delta\rho\dive\uu_2}_{B^{d/2}} \\ &\qquad\qquad\qquad\qquad\qquad\qquad\qquad +\norm{(\rho_1+1)\dive\delta\uu)}_{B^{d/2}}\Big)d\tau\\
  \lesssim& e^{C\norm{\uu_2}_{L^1([0,T'];B^{d/2+1})}}\int_0^{T'} \Big[\norm{\delta\uu}_{B^{d/2}} \norm{\rho_1}_{B^{d/2+1}}+\norm{\delta\rho}_{B^{d/2}}\norm{\uu_2}_{B^{d/2+1}} \\ &\qquad\qquad\qquad\qquad\qquad +(1+\norm{\rho_1}_{B^{d/2}})\norm{\delta\uu}_{B^{d/2+1}}\Big]d\tau.
\end{align*}
Using the Gronwall inequality, it yields
\begin{align}\label{eq.uni.4}
\begin{aligned}
  &\norm{\delta\rho}_{\LL([0,T'];B^{d/2})}\\
  \lesssim &e^{C\norm{\uu_2}_{L^1([0,T'];B^{d/2+1})}}\int_0^{T'}\big[\norm{\delta\uu}_{B^{d/2}} \norm{\rho_1}_{B^{d/2+1}}+(1+\norm{\rho_1}_{B^{d/2}}) \norm{\delta\uu}_{B^{d/2+1}}\big]d\tau\\
  \ls &C_T\left(\norm{\delta\uu}_{L^2([0,T']; B^{d/2})}+\norm{\delta\uu}_{L^1([0,T'];B^{d/2+1})}\right),
\end{aligned}
\end{align}
where $C_T$ is independent of $T'$.

Similarly, we have
\begin{align}\label{eq.uni.5}
\norm{\delta g}_{\LL([0,T'];B^{d/2})}\ls C_T\left(\norm{\delta\uu}_{L^2([0,T']; B^{d/2})}+\norm{\delta\uu}_{L^1([0,T'];B^{d/2+1})}\right).
\end{align}

Applying Lemmas \ref{lem.moment} and \ref{lem.fgbesov} to the third equation of \eqref{eq.sys.uni}, it yields
\begin{align*}
  &\norm{\delta\uu}_{\LL([0,T'];B^{d/2-1})}+\norm{\delta\uu}_{L^1([0,T'];B^{d/2+1})}\\
  \ls& C e^{C\int_0^{T'}[\norm{\uu_1}_{B^{d/2+1}}+\norm{\uu_2}_{B^{d/2+1}}]d\tau}\int_0^{T'} \Big(\norm{\delta\rho}_{B^{d/2}}\norm{\uu_1}_{B^{d/2+1}} \\ &\qquad\qquad\qquad\quad\qquad\qquad\qquad\qquad\qquad+\norm{Q(\rho_2,g_2)-Q(\rho_1,g_1)}_{B^{d/2-1}}\Big)d\tau.
\end{align*}
By Lemma \ref{lem.comp}, we get
\begin{align*}
  \norm{Q(\rho_2,g_2)-Q(\rho_1,g_1)}_{B^{d/2-1}}\lesssim (1+\norm{(\rho_1,\rho_2,g_1,g_2)}_{B^{d/2}})^3(\norm{\delta\rho}_{B^{d/2}}+\norm{\delta g}_{B^{d/2}}).
\end{align*}
Thus, it follows that
\begin{align}\label{eq.uni.6}
\begin{aligned}
 &\norm{\delta\uu}_{\LL([0,T'];B^{d/2-1})}+\norm{\delta\uu}_{L^1([0,T'];B^{d/2+1})}\\
  \ls& C_T(T'+T'^{1/2})(\norm{\delta\rho}_{L^\infty([0,T'];B^{d/2})}+\norm{\delta g}_{L^\infty([0,T'];B^{d/2})}),
\end{aligned}
\end{align}
since $\uu_1\in L^2([0,T];B^{d/2+1})$ by Lemma \ref{lem.inter}.

From \eqref{eq.uni.4}-\eqref{eq.uni.6}, it yields, with the help of Lemma \ref{lem.inter}, that
\begin{align*}
  &\norm{\delta\uu}_{\LL([0,T'];B^{d/2-1})}+\norm{\delta\uu}_{L^1([0,T'];B^{d/2+1})}\\
  \ls& C_T(T'+T'^{1/2})(\norm{\delta\uu}_{L^2([0,T']; B^{d/2})}+\norm{\delta\uu}_{L^1([0,T'];B^{d/2+1})})\\
  \ls& C_T(T'+T'^{1/2})(\norm{\delta\uu}_{\LL([0,T']; B^{d/2-1})}+\norm{\delta\uu}_{L^1([0,T'];B^{d/2+1})}).
\end{align*}
Therefore, if we choose $T'$ so small that $C_T(T'+T'^{1/2})<1$, then we obtain that $(\delta\rho,\delta g,\delta\uu)=(0,0,\mathbf{0})$ on the time interval $[0,T']$. As in the proof of uniqueness for global well-posedness, we can extend $T'$ to $T$ by the translation with respect to the time variable, i.e. $(\delta\rho,\delta g,\delta\uu)=(0,0,\mathbf{0})$ on the time interval $[0,T]$.

\subsection{A continuation criterion}

\begin{proposition}\label{prop.cc}
  Under the hypotheses of Theorem \ref{thm.4}, assume that the system \eqref{eq.system.2} has a solution $(\rho,g,\uu)$ on $[0,T)\times\R^d$ which belongs to $F_{T'}^1$ for all $T'<T$ and satisfies
  \begin{align*}
    \rho, g\in L^\infty([0,T);B^{d/2,d/2+1}), \quad \inf_{(t,x)\in [0,T)\times\R^d} \rho(t,x)>-1, \quad \int_0^T \norm{\nabla\uu}_{\infty}dt<\infty.
  \end{align*}
  Then, there exists some $T^*>T$ such that $(\rho,g,\uu)$ may be continued on $[0,T^*]\times\R^d$ to a solution of \eqref{eq.system.2} which belongs to $F_{T^*}^1$.
\end{proposition}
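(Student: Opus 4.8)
The plan is to follow the classical pattern for continuation criteria of parabolic--hyperbolic systems. First I would show that, under the stated assumptions, the full $F_{T'}^1$-norm of $(\rho,g,\uu)$ stays bounded uniformly as $T'\uparrow T$; then I would restart the Cauchy problem from a time $t_0<T$ close enough to $T$, invoking the uniform lower bound on the lifespan that is already built into the proof of Theorem~\ref{thm.4}, and glue the two solutions together by the uniqueness statement of the preceding subsection.

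The decisive step is propagating the velocity regularity up to time $T$. I would apply Lemma~\ref{lem.moment} with $\alpha=1$ to the momentum equation of \eqref{eq.system.2}, written as $\uu_t+\uu\cdot\nabla\uu-(1+\rho)(\mu\Delta\uu+(\mu+\lambda)\nabla\dive\uu)=-Q(\rho,g)$: here $\vv=\uu$ depends linearly on $\uu$, $b=1+\rho$ satisfies $b_*>0$ because $\inf_{[0,T)\times\R^d}\rho>-1$, and the source is $f=-Q(\rho,g)$. In the linear-$\vv$ version of Lemma~\ref{lem.moment} the exponential argument involves $\int_0^t\norm{\nabla\uu}_\infty\,d\tau$ instead of $\int_0^t\norm{\uu}_{B^{d/2+1}}\,d\tau$; the former is finite by hypothesis, and $\int_0^t(b_*\underline{\nu})^{-1}\bar{\nu}^2\norm{\rho}_{B^{d/2+1}}^2\,d\tau$ is finite because $\rho\in L^\infty([0,T);B^{d/2,d/2+1})\hookrightarrow L^\infty([0,T);B^{d/2+1})$ and $T<\infty$. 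Furthermore, the nonlinear bound for $Q$ obtained in Step~2 of the proof of Theorem~\ref{thm.4} controls $\norm{Q(\rho,g)}_{B^{d/2-1,d/2}}$ by a polynomial in the $B^{d/2}$- and $B^{d/2,d/2+1}$-norms of $(\rho,g)$, so $Q(\rho,g)\in L^1([0,T);B^{d/2-1,d/2})$. Lemma~\ref{lem.moment} (used with $s=d/2-1$ and $s=d/2$, exactly as in Step~2 of the proof of Theorem~\ref{thm.4}) then yields a bound for $\norm{\uu}_{\tilde{\mathcal{C}}([0,T');B^{d/2-1,d/2})}+\norm{\uu}_{L^1([0,T');B^{d/2+1,d/2+2})}$ that is uniform in $T'<T$; in particular $V(T):=\int_0^T\norm{\uu}_{B^{d/2+1}}\,dt<\infty$.

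Feeding $V(T)<\infty$ into Proposition~\ref{prop.3} applied to the $\rho$- and $g$-equations of \eqref{eq.system.2} (whose right-hand sides $(\rho+1)\dive\uu$ and $-(g+\nb)\dive\uu$ lie in $L^1([0,T);B^{d/2,d/2+1})$, being bounded there by $(1+\norm{\rho}_{B^{d/2}}+\norm{g}_{B^{d/2}})\norm{\uu}_{B^{d/2+1,d/2+2}}$) gives uniform-in-$T'$ bounds for $\rho$ and $g$ in $\tilde{\mathcal{C}}([0,T');B^{d/2,d/2+1})$, so that $\sup_{T'<T}\norm{(\rho,g,\uu)}_{F_{T'}^1}<\infty$. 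Since $B^{d/2,d/2+1}\hookrightarrow L^\infty$, this together with $\inf_{[0,T)}\rho>-1$ produces constants $0<b_*\ls 1+\rho(t,x)\ls b^*$ on $[0,T)\times\R^d$. Consequently, for every $t_0<T$ the data $(\rho(t_0),g(t_0),\uu(t_0))$ satisfy the hypotheses of Theorem~\ref{thm.4}, with $\norm{\rho(t_0)}_{B^{d/2,d/2+1}}+\norm{g(t_0)}_{B^{d/2,d/2+1}}+\norm{\uu(t_0)}_{B^{d/2-1,d/2}}$ and $b_*,b^*$ all bounded independently of $t_0$.

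Finally I would inspect the proof of Theorem~\ref{thm.4}: there the lifespan is bounded below by any time meeting the smallness conditions \eqref{eq.uni.3}, which depend only on the quantities just listed and on $d,\mu,\lambda$. Hence there is a $\tau>0$, independent of $t_0$, such that \eqref{eq.system.2} with data $(\rho(t_0),g(t_0),\uu(t_0))$ has an $F^1$-solution on $[t_0,t_0+\tau]$ with $1+\rho$ bounded away from $0$. Choosing $t_0\in(T-\tau,T)$ gives $t_0+\tau>T$, and by the uniqueness result of the previous subsection (valid in the larger class $F_T\supset F_T^1$) this solution coincides with $(\rho,g,\uu)$ on $[t_0,T)$; concatenation yields a solution of \eqref{eq.system.2} on $[0,T^*]$ with $T^*:=t_0+\tau>T$, belonging to $F_{T^*}^1$. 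The main obstacle is exactly the closure of the velocity estimate in the second step: it works only because $\int_0^T\norm{\nabla\uu}_\infty\,dt<\infty$ lets one use the refined form of Lemma~\ref{lem.moment}, thereby avoiding the circularity that would otherwise arise --- bounding $\uu$ in $L^1([0,T);B^{d/2+1,d/2+2})$ would already require $\int_0^T\norm{\uu}_{B^{d/2+1}}\,dt<\infty$. Everything downstream is routine.
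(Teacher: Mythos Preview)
Your approach is correct and essentially the same as the paper's: apply the refined form of Lemma~\ref{lem.moment} (with $\int_0^t\norm{\nabla\uu}_\infty\,d\tau$ in the exponential) to control $\uu$ uniformly in $\tilde{L}^\infty([0,T');B^{d/2-1,d/2})\cap L^1([0,T');B^{d/2+1,d/2+2})$ up to $T$, then restart the Cauchy problem near $T$ and glue by uniqueness. The paper's proof is much terser than yours---a single displayed estimate followed by one sentence---and in particular does not re-derive the bounds on $\rho,g$ (your third paragraph), since $\rho,g\in L^\infty([0,T);B^{d/2,d/2+1})$ is already part of the hypothesis and suffices to bound the restarted data.
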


\begin{proof}
  Recall that $\uu$ satisfies
  \begin{align*}
    \uu_t+\uu\cdot\nabla\uu-(1+\rho)(\mu\Delta\uu+(\mu+\lambda)\nabla\dive\uu)+Q(\rho,g)=0, \quad \uu|_{t=0}=\uu_0.
  \end{align*}
  By Lemma \ref{lem.moment}, we get, for $T'<T$, that
  \begin{align*}
    &\norm{\uu}_{\LL([0,T'];B^{d/2-1,d/2})} +\underline{\nu}\norm{\uu}_{L^1([0,T'];B^{d/2+1,d/2+2})}\\
    \ls&C e^{C\int_0^{T'}\left(\norm{\nabla\uu}_\infty +\norm{\rho}_{B^{d/2+1}}^2\right)dt}\left(\norm{\uu_0}_{B^{d/2-1,d/2}}+\int_0^{T'} \norm{\rho}_{B^{d/2,d/2+1}}dt\right)
  \end{align*}
  for some constant $C$ depending only on $d$ and viscosity coefficients. Thus, there exists a constant $\eps>0$ such that \eqref{eq.system.2} with initial data $(\rho(T-\eps),g(T-\eps),\uu(T-\eps))$ yields a solution on $[0,2\eps]$. Since the solution $(\rho,g,\uu)$ is unique on $[0,T)$, this provides a continuation of $(\rho,g,\uu)$ beyond $T$.
\end{proof}

\begin{appendix}

\section{Littlewood-Paley theory and Besov spaces}
\renewcommand{\thesection}{A}

This section is devoted to recall some properties of Littlewood-Paley theory and Besov spaces which will be used in this paper. For more details, one can see \cite{Danchin01,HHL} and references therein.

Let $\psi : \R^d \to [0,1]$ be a radial smooth cut-off function
valued in $[0,1]$ such that
\begin{align*}
    \psi(\xi)=\left\{
    \begin{array}{ll}
    1, &\abs{\xi}\ls 3/4,\\
    \text{smooth}, &3/4<\abs{\xi}<4/3,\\
    0, &\abs{\xi}\gs 4/3.
    \end{array}
    \right.
\end{align*}
Let $\varphi(\xi)$ be the function
\begin{align*}
    \varphi(\xi):=\psi(\xi/2)-\psi(\xi).
\end{align*}
Thus, $\psi$ is supported in the ball $\set{\xi\in\R^d:
\abs{\xi}\ls 4/3}$, and $\varphi$ is also a smooth cut-off function
valued in $[0,1]$ and supported in the annulus $\{\xi\in\R^d:
3/4\ls\abs{\xi}\ls 8/3\}$.
 By construction, we have
\begin{align*}
    \sum_{k\in\Z}\varphi(2^{-k}\xi)=1, \quad \forall
    \xi\neq 0.
\end{align*}
One can define the dyadic blocks as follows. For $k\in\Z$, let
\begin{align*}
\dk f:=\F^{-1}\varphi(2^{-k}\xi)\F f.
\end{align*}
The formal decomposition
\begin{align}\label{lpd}
    f=\sum_{k\in\Z}\dk f
\end{align}
is called homogeneous Littlewood-Paley decomposition. Nevertheless, \eqref{lpd} is true modulo
polynomials, in other words (cf.\cite{Pee76}), if
$f\in\Sz'(\R^d)$, then $\sum_{k\in\Z}\dk f$ converges modulo
$\mathscr{P}[\R^d]$ and \eqref{lpd} holds in
$\Sz'(\R^d)/\mathscr{P}[\R^d]$.

\begin{definition}
Let $s\in\R$. For $f\in\Sz'(\R^d)$, we
write
\begin{align*}
    \norm{f}_{\be[s]}=\sum_{k\in\Z}
    2^{ks}\norm{\dk f}_{2}.
\end{align*}
\end{definition}

A difficulty comes from the choice of homogeneous spaces at this point.
Indeed, $\norm{\cdot}_{\be[s]}$ cannot be a norm on
$\{f\in\Sz'(\R^d): \norm{f}_{\be[s]}<\infty\}$ because
$\norm{f}_{\be[s]}=0$ means that $f$ is a polynomial. This
enforces us to adopt the following definition for homogeneous Besov
spaces (cf. \cite{Danchin01}).

\begin{definition}
Let $s\in\R$ and $m=-[\hn+1-s]$. If $m<0$, then we define
$\be[s](\R^d)$ as
\begin{align*}
    \be[s]=\Big\{f\in\Sz'(\R^d):
    \norm{f}_{\be[s]}<\infty \text{ and } f=\sum_{k\in\Z}\dk f
    \text{ in } \Sz'(\R^d)\Big\}.
\end{align*}
If $m\gs 0$, we denote by $\mathscr{P}_m$ the set of $d$ variables
polynomials of degree less than or equal to $m$ and define
\begin{align*}
    \be[s]=\Big\{f\in\Sz'(\R^d)/\mathscr{P}_m:
    \norm{f}_{\be[s]}<\infty \text{ and } f=\sum_{k\in\Z}\dk f
    \text{ in } \Sz'(\R^d)/\mathscr{P}_m\Big\}.
\end{align*}
\end{definition}

For the composition of functions, we have the following estimates.

\begin{lemma}\label{lem.comp}
Let $s>0$ and $u\in \be[s]\cap L^\infty$. Then, it holds

\mbox{\rm (i) } Let $F\in W_{loc}^{[s]+2,\infty}(\R^d)$ with
$F(0)=0$. Then $F(u)\in\be[s]$. Moreover, there exists a function of
one variable $C_0$ depending only on $s$ and $F$, and such that
\begin{align*}
    \norm{F(u)}_{\be[s]}\ls
    C_0(\norm{u}_{L^\infty})\norm{u}_{\be[s]}.
\end{align*}

\mbox{\rm (ii)} If $u,\, v\in\be[\hn]$, $(v-u)\in \be[s]$ for
$s\in(-\hn,\hn]$ and $G\in W_{loc}^{[\hn]+3,\infty}(\R^d)$
satisfies $G'(0)=0$, then $G(v)-G(u)\in \be[s]$ and there exists a
function of two variables $C$ depending only on $s$, $N$ and $G$,
and such that
\begin{align*}
    \norm{G(v)-G(u)}_{\be[s]}\ls C(\norm{u}_{L^\infty},
    \norm{v}_{L^\infty})\left(\norm{u}_{\be[\hn]}+\norm{v}_{\be[\hn]}\right)
    \norm{v-u}_{\be[s]}.
\end{align*}
\end{lemma}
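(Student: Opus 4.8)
The plan is to prove both assertions by the classical Bony-type telescoping scheme going back to Danchin (see \cite{BCDbook,Danchin01}); the bulk of each estimate is the standard composition bound obtained by decomposing the nonlinearity through the Littlewood-Paley partial sums, so the only point requiring genuine attention is the low-frequency bookkeeping forced by the quotient by polynomials in the definition of $\be[s]$.

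For part (i), I would start from the homogeneous decomposition
\[
  F(u)=\sum_{j\in\Z}\bigl(F(\sk[j+1]u)-F(\sk[j]u)\bigr),
\]
which converges in $\Sz'(\R^d)/\mathscr{P}_m$ because, with $s>0$ and $u\in\be[s]\cap L^\infty$, one has $\sk[j]u\to0$ as $j\to-\infty$ and $\sk[j]u\to u$ as $j\to+\infty$ in the relevant quotient, $F$ being continuous with $F(0)=0$. By the mean value theorem each summand equals $m_j\triangle_j u$ with $m_j:=\int_0^1 F'\bigl(\sk[j]u+\tau\triangle_j u\bigr)\,d\tau$, and the key pointwise bounds are $\norm{m_j}_\infty\ls C_0(\norm{u}_\infty)$ and, via Bernstein's inequality for $\sk[j]u$ and $\triangle_j u$ together with $F\in W^{[s]+2,\infty}_{loc}$, the estimate $\norm{\nabla^\ell m_j}_\infty\ls C_0(\norm{u}_\infty)2^{j\ell}$ for $0\ls\ell\ls[s]+1$. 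I would then bound $\norm{\triangle_k(m_j\triangle_j u)}_2$ in two regimes: for $j\gs k$ directly by $\norm{m_j}_\infty\norm{\triangle_j u}_2$; for $j<k$ by transferring $M:=[s]+1$ derivatives onto the product, using Leibniz's rule and the bounds on $\nabla^\ell m_j$, which gives $\norm{\triangle_k(m_j\triangle_j u)}_2\ls C_0(\norm{u}_\infty)2^{(j-k)M}\norm{\triangle_j u}_2$. Summing $\sum_k2^{ks}\sum_j\norm{\triangle_k(m_j\triangle_j u)}_2$, the first regime is summable because $s>0$ and the second because $M>s$, each contributing $C_0(\norm{u}_\infty)\norm{u}_{\be[s]}$. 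Differentiating $m_j$ exactly $[s]+1$ times calls on $F^{([s]+2)}$, which is exactly the regularity assumed.

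For part (ii), I would reduce to (i) by the first-order Taylor formula: setting $w:=v-u$ and $\Phi(a,b):=\int_0^1 G'(a+\tau b)\,d\tau$, one has $G(v)-G(u)=w\,\Phi(u,w)$ with $\Phi(0,0)=G'(0)=0$. Applying the product law $\norm{fg}_{\be[s]}\ls C\norm{f}_{\be[s]}\norm{g}_{\be[\hn]}$, valid precisely for $s\in(-\hn,\hn]$, with $f=w$ and $g=\Phi(u,w)$ gives $\norm{G(v)-G(u)}_{\be[s]}\ls C\norm{v-u}_{\be[s]}\norm{\Phi(u,w)}_{\be[\hn]}$. Then I would invoke the vector-valued analogue of part (i) at regularity $\hn>0$ (admissible because $\Phi\in W^{[\hn]+2,\infty}_{loc}$, i.e. $G\in W^{[\hn]+3,\infty}_{loc}$) to get $\norm{\Phi(u,w)}_{\be[\hn]}\ls C(\norm{u}_\infty,\norm{w}_\infty)\bigl(\norm{u}_{\be[\hn]}+\norm{w}_{\be[\hn]}\bigr)$, and conclude using $\norm{w}_\infty\ls\norm{u}_\infty+\norm{v}_\infty$, $\norm{w}_{\be[\hn]}\ls\norm{u}_{\be[\hn]}+\norm{v}_{\be[\hn]}$, and the embedding $\be[\hn]\hookrightarrow L^\infty$. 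The only new ingredient relative to (i) is the harmless two-variable version of the telescoping estimates, obtained by replacing $F'$ with the gradient of $\Phi$.

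The step I expect to be the main obstacle is purely technical: the rigorous justification, in $\Sz'(\R^d)/\mathscr{P}_m$, of the telescoping identity for $F(u)$ and of the convergence of $\sum_j\bigl(F(\sk[j+1]u)-F(\sk[j]u)\bigr)$, keeping track of constant and polynomial contributions when $F'(0)\neq0$. Once this is secured, the quantitative estimates are a matter of Bernstein's inequality and geometric summation in $k$ and $j$.
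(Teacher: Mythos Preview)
The paper does not actually prove this lemma: it is stated in the appendix as a recalled fact, with the sentence ``For more details, one can see \cite{Danchin01,HHL} and references therein'' covering all the Besov-space lemmas of that section. So there is no paper proof to compare against; the authors simply defer to the literature.

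Your proposal is the standard argument one finds in those references (Meyer's first linearization via the telescoping sum $F(u)=\sum_j\bigl(F(\sk[j+1]u)-F(\sk[j]u)\bigr)$, followed by the two-regime estimate on $\dk(m_j\triangle_j u)$ and geometric summation), and the reduction of (ii) to (i) through $G(v)-G(u)=(v-u)\int_0^1 G'(u+\tau(v-u))\,d\tau$ combined with the product law of Lemma~\ref{lem.fgbesov} is exactly the route taken in \cite{BCDbook,Danchin01}. The regularity bookkeeping you do (needing $F^{([s]+2)}$ in (i) and hence $G^{([\hn]+3)}$ in (ii)) matches the hypotheses precisely, and your identification of the quotient-by-polynomials issue as the only delicate point is accurate. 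In short, your sketch is correct and is precisely the approach the cited references use; the paper itself offers nothing beyond the citation.
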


We also need hybrid Besov spaces for which regularity assumptions
are different in low frequencies and high frequencies \cite{Danchin01}.
We are going to recall the definition of these new spaces and some
of their main properties.

\begin{definition}
Let $s,\,t\in\R$. We define
\begin{align*}
    \norm{f}_{\hbe[s]{t}}=\sum_{k\ls 0}2^{ks}\norm{\dk f}_{2}
    +\sum_{k>0}2^{kt}\norm{\dk f}_{2}.
\end{align*}
Let $m=-[\hn+1-s]$, we then define
\begin{align*}
    \hbe[s]{t}(\R^d)
 =&\set{f\in\Sz'(\R^d): \norm{f}_{\hbe[s]{t}}<\infty},
  \quad \text{if } m<0,\\
    \hbe[s]{t}(\R^d)
 =&\set{f\in\Sz'(\R^d)/\mathscr{P}_m: \norm{f}_{\hbe[s]{t}}<\infty},
   \quad \text{if } m \gs 0.
\end{align*}
\end{definition}

\begin{lemma} We have the following inclusions for hybrid Besov spaces.

\mbox{\rm (i)} We have $\hbe[s]{s}=\be[s]$.

\mbox{\rm (ii)} If $s\ls t$ then $\hbe[s]{t}=\be[s]\cap\be[t]$.
Otherwise, $\hbe[s]{t}=\be[s]+\be[t]$.

\mbox{\rm (iii)} The space $\hbe[0]{s}$ coincides with the usual
inhomogeneous Besov space $B_{2,1}^s$.

\mbox{\rm (iv)} If $s_1\ls s_2$ and $t_1\gs t_2$, then
$\hbe[s_1]{t_1}\hookrightarrow \hbe[s_2]{t_2}$.
\end{lemma}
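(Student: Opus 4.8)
I would prove the four assertions by direct computation from the definitions of the norms
$\norm{f}_{\be[s]}=\sum_{k\in\Z}2^{ks}\norm{\dk f}_2$ and
$\norm{f}_{\hbe[s]{t}}=\sum_{k\ls 0}2^{ks}\norm{\dk f}_2+\sum_{k>0}2^{kt}\norm{\dk f}_2$,
the only analytic input being the trivial fact that $a\mapsto 2^{ka}$ is nonincreasing when $k\ls 0$ and nondecreasing when $k>0$. Before starting I would record once and for all that the polynomial quotient $\mathscr{P}_m$ appearing in the definitions is compatible across the spaces that occur together — the relevant $m$ being $-[\hn+1-\min(s,t)]$, resp.\ $-[\hn+1-s]$ — so that all the identifications below are genuine equalities of subspaces of $\Sz'(\R^d)$ or of $\Sz'(\R^d)/\mathscr{P}_m$, not merely equalities modulo lower-order ambiguities.

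For (i), when $t=s$ the two series defining $\norm{f}_{\hbe[s]{s}}$ and $\norm{f}_{\be[s]}$ agree term by term and the quotients coincide, so $\hbe[s]{s}=\be[s]$ with identical norms. For (ii) with $s\ls t$: splitting $\norm{f}_{\be[s]}+\norm{f}_{\be[t]}$ according to the sign of $k$ and using $2^{kt}\ls 2^{ks}$ for $k\ls 0$ and $2^{ks}\ls 2^{kt}$ for $k>0$ yields $\norm{f}_{\hbe[s]{t}}\ls\norm{f}_{\be[s]}+\norm{f}_{\be[t]}\ls 2\norm{f}_{\hbe[s]{t}}$, hence $\hbe[s]{t}=\be[s]\cap\be[t]$ with equivalent norms. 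For the opposite case $s>t$ I would verify the two inclusions making up $\be[s]+\be[t]$: given $f\in\hbe[s]{t}$, its low-frequency part $\sum_{k\ls 0}\dk f$ belongs to $\be[s]$ and its high-frequency part $\sum_{k>0}\dk f$ to $\be[t]$, with both norms dominated by $\norm{f}_{\hbe[s]{t}}$; conversely, for any splitting $f=f_1+f_2$ with $f_1\in\be[s]$ and $f_2\in\be[t]$, the bounds $2^{ks}\norm{\dk f_2}_2\ls 2^{kt}\norm{\dk f_2}_2$ for $k\ls 0$ and $2^{kt}\norm{\dk f_1}_2\ls 2^{ks}\norm{\dk f_1}_2$ for $k>0$ (both using $s>t$) give $\norm{f}_{\hbe[s]{t}}\lesssim\norm{f_1}_{\be[s]}+\norm{f_2}_{\be[t]}$, and taking the infimum over all such splittings closes the argument.

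I would then do (iv) before (iii), since it is the cleaner of the two: if $s_1\ls s_2$ and $t_1\gs t_2$, then $2^{ks_2}\ls 2^{ks_1}$ for $k\ls 0$ and $2^{kt_2}\ls 2^{kt_1}$ for $k>0$, so term by term $\norm{f}_{\hbe[s_2]{t_2}}\ls\norm{f}_{\hbe[s_1]{t_1}}$; since $s_1\ls s_2$ the target is quotiented by a polynomial space at least as large as the source's, so the embedding is well defined (alternatively one can feed (ii) into the obvious inclusions between sums and intersections of homogeneous Besov spaces). Finally, for (iii): for $d\gs 2$ the relevant index is $m=-[\hn+1]<0$, so $\hbe[0]{s}$, like the inhomogeneous $B_{2,1}^s$, is a space of genuine tempered distributions with no quotient; the high-frequency part of $\norm{f}_{\hbe[0]{s}}$ — the blocks $\dk f$ with $k>0$, weighted by $2^{ks}$ — is exactly the high-frequency part of the inhomogeneous Besov norm, and the low-frequency $\ell^1$-sum $\sum_{k\ls 0}\norm{\dk f}_2$ is, in the convention of \cite{Danchin01}, precisely the low-frequency contribution to $\norm{f}_{B_{2,1}^s}$, so unwinding the definitions gives the equality.

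None of these steps is a genuine obstacle; the only points requiring any care are the consistent bookkeeping of the polynomial quotient $\mathscr{P}_m$ throughout and, in (iii), the identification of the two standard notions of low-frequency truncation, and both are handled exactly as in \cite{Danchin01}.
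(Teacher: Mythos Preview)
The paper does not supply a proof of this lemma; it is stated in the appendix as a recalled property of hybrid Besov spaces, with the reader referred to \cite{Danchin01,HHL} for details. Your direct-computation approach is correct and is exactly the standard argument: parts (i), (ii), and (iv) follow immediately from the monotonicity of $k\mapsto 2^{ka}$ on each half of $\Z$, as you indicate, and your handling of the sum space in the case $s>t$ via the low/high-frequency splitting is the usual one. Your caveat about the polynomial quotient $\mathscr{P}_m$ is well placed, and your treatment of (iii) --- deferring the identification of the low-frequency $\ell^1$-sum with the low-frequency block of the inhomogeneous norm to the convention in \cite{Danchin01} --- is appropriate, since that identification is indeed convention-dependent and not a genuine equivalence under every definition of $B_{2,1}^s$ in the literature.
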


Let us now recall some useful estimates for the product in hybrid
Besov spaces.

\begin{lemma}\label{lem.fgbesov}
Let $s_1,\, s_2>0$ and $f,\,g\in L^\infty\cap \hbe[s_1]{s_2}$. Then
$fg\in \hbe[s_1]{s_2}$ and
\begin{align*}
    \norm{fg}_{\hbe[s_1]{s_2}}\lesssim
    \norm{f}_{L^\infty}\norm{g}_{\hbe[s_1]{s_2}}
    +\norm{f}_{\hbe[s_1]{s_2}}\norm{g}_{L^\infty}.
\end{align*}
Let $s\in (-d/2,d/2]$, $f\in B^{d/2}$ and $g\in B^s$, then $fg\in B^s$ and
$$\norm{fg}_{B^s}\lesssim \norm{f}_{B^{d/2}}\norm{g}_{B^s}.$$

Let $s_1, s_2, t_1, t_2\ls \hn$ such that $\min(s_1+s_2, t_1+t_2)>0$,
$f\in \hbe[s_1]{t_1}$ and $g\in
\hbe[s_2]{t_2}$. Then $fg\in \hbe[s_1+s_2-1]{t_1+t_2-1}$ and
\begin{align*}
    \norm{fg}_{\hbe[s_1+s_2-\hn]{t_1+t_2-\hn}}\lesssim
    \norm{f}_{\hbe[s_1]{t_1}}\norm{g}_{\hbe[s_2]{t_2}}.
\end{align*}
\end{lemma}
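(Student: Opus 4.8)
The plan is to obtain the three product laws from Bony's paraproduct decomposition $fg=T_fg+T_gf+R(f,g)$, with $T_fg=\sum_j S_{j-1}f\,\triangle_j g$ and $R(f,g)=\sum_{|j-j'|\ls 1}\triangle_j f\,\triangle_{j'}g$. For each of the three bilinear pieces I would use only three elementary facts: the Bernstein inequalities (to trade $L^1$, $L^2$, $L^\infty$ norms of spectrally localized functions against one another and to recover powers of the frequency); the spectral supports of the pieces (the Fourier transform of $S_{j-1}f\,\triangle_j g$ sits in an annulus of size $\sim 2^j$, that of $\triangle_j f\,\triangle_{j'}g$ in a ball of size $\sim 2^j$), which say exactly which output blocks $\triangle_k(fg)$ receive a contribution; and discrete convolution inequalities in $\ell^1(\Z)$ to sum the resulting doubly-indexed series. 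Since the spaces involved are $B^s=\dot{B}^s_{2,1}$ and the hybrid spaces $B^{s,t}$, whose norm is $\sum_k 2^{k\varphi^{s,t}(k)}\norm{\triangle_k\cdot}_2$ with $\varphi^{s,t}(k)=s$ for $k\ls 0$ and $t$ for $k>0$ (the exponent already used in the proof of Proposition \ref{proposition}), every estimate is carried out at the level of these weighted $\ell^1$ sums, the homogeneous polynomial-modulo subtleties being handled as in \cite{Danchin01,BCDbook,HHL}.

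For the first assertion ($s_1,s_2>0$), the bounds $\norm{S_{j-1}f}_{L^\infty}\lesssim\norm{f}_{L^\infty}$ and $\norm{\triangle_j f}_{L^\infty}\lesssim\norm{f}_{L^\infty}$ reduce each of $T_fg$, $R(f,g)$ and (by symmetry) $T_gf$, after summing over the output index $k$, to convolving $b_j:=2^{j\varphi^{s_1,s_2}(j)}\norm{\triangle_j g}_2$ (respectively its analogue for $f$) against a kernel supported in $\{j\gs k-N_0\}$, for a fixed integer $N_0$, and of size $2^{k\varphi^{s_1,s_2}(k)-j\varphi^{s_1,s_2}(j)}$; such a kernel has uniformly bounded $\ell^1$ rows and columns precisely because $s_1,s_2>0$ (every geometric series that arises, including the finitely many ``crossover'' terms straddling $j=0$ or $k=0$, then converges), which gives $\norm{fg}_{B^{s_1,s_2}}\lesssim\norm{f}_{L^\infty}\norm{g}_{B^{s_1,s_2}}+\norm{f}_{B^{s_1,s_2}}\norm{g}_{L^\infty}$. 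The second assertion is the same computation, now allowing $s\ls 0$ and invoking $B^{d/2}\hookrightarrow L^\infty$ to place $f$ in $L^\infty$: $T_fg$ and $R(f,g)$ go through with $\norm{f}_{L^\infty}\lesssim\norm{f}_{B^{d/2}}$, except that summability of the remainder now forces $s+d/2>0$ (the source of the restriction $s>-d/2$), while in $T_gf$, where $g$ is not \emph{a priori} bounded, one uses $\norm{S_{j-1}g}_{L^\infty}\lesssim\sum_{j'<j}2^{j'd/2}\norm{\triangle_{j'}g}_2\lesssim 2^{j(d/2-s)}\norm{g}_{B^s}$ for $s<d/2$ (and $\norm{S_{j-1}g}_{L^\infty}\lesssim\norm{g}_{B^{d/2}}$ at the endpoint $s=d/2$), which gives $\norm{T_gf}_{B^s}\lesssim\norm{g}_{B^s}\norm{f}_{B^{d/2}}$.

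For the third assertion all four indices are $\ls d/2$, so neither factor is \emph{a priori} bounded and both $S_{j-1}f$ and $S_{j-1}g$ must be estimated by the Bernstein argument above: $\norm{S_{j-1}f}_{L^\infty}\lesssim 2^{j\tau_1(j)}\norm{f}_{B^{s_1,t_1}}$ with $\tau_1(j)=d/2-s_1$ for $j\ls 0$ and $d/2-t_1$ for $j>0$ (the endpoints $s_1=d/2$ or $t_1=d/2$ handled via $B^{d/2}\hookrightarrow L^\infty$), and likewise $\norm{S_{j-1}g}_{L^\infty}\lesssim 2^{j\tau_2(j)}\norm{g}_{B^{s_2,t_2}}$. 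The virtue of the target indices $(s_1+s_2-d/2,\ t_1+t_2-d/2)$ is that the exponents then cancel: exchanging sums in the paraproduct leaves $\sum_j\Big(\sum_{|k-j|\ls N_0}2^{k\varphi^{s_1+s_2-d/2,\,t_1+t_2-d/2}(k)+j\tau_1(j)}\Big)\norm{\triangle_j g}_2$ with the inner sum $\lesssim 2^{j\varphi^{s_2,t_2}(j)}$ (one checks $\varphi^{s_1+s_2-d/2,\,t_1+t_2-d/2}(j)+\tau_1(j)=\varphi^{s_2,t_2}(j)$ in both regimes), so $\norm{T_fg}_{B^{s_1+s_2-d/2,\,t_1+t_2-d/2}}\lesssim\norm{f}_{B^{s_1,t_1}}\norm{g}_{B^{s_2,t_2}}$, and $T_gf$ is symmetric; for $R(f,g)$ one is left with $\sum_j\norm{\triangle_j f}_2\norm{\triangle_j g}_2\Big(\sum_{k\ls j+N_0}2^{k(\varphi^{s_1+s_2-d/2,\,t_1+t_2-d/2}(k)+d/2)}\Big)$, and the inner geometric series converges, with sum $\lesssim 2^{j(\varphi^{s_1,t_1}(j)+\varphi^{s_2,t_2}(j))}$, exactly when $\min(s_1+s_2,\,t_1+t_2)>0$ --- which is the stated hypothesis --- so that, by $\sum_j a_jb_j\ls(\sum_j a_j)(\sup_j b_j)$ with $a_j:=2^{j\varphi^{s_1,t_1}(j)}\norm{\triangle_j f}_2$ and $b_j:=2^{j\varphi^{s_2,t_2}(j)}\norm{\triangle_j g}_2$, the remainder too is $\lesssim\norm{f}_{B^{s_1,t_1}}\norm{g}_{B^{s_2,t_2}}$.

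The step I expect to be the main obstacle --- or at least the most error-prone --- is the bookkeeping of the piecewise weight $\varphi^{s,t}$ throughout the third assertion: one must check that the ``low-frequency output from high-frequency input'' and ``high-frequency output from low-frequency input'' contributions remain summable, and track which of the conditions $s_i,t_i\ls d/2$, $\min(s_1+s_2,t_1+t_2)>0$, and the endpoint embedding $B^{d/2}\hookrightarrow L^\infty$ is being used at each crossover. The individual estimates are routine once the spectral supports are written down; it is keeping the hybrid exponents aligned that is delicate.
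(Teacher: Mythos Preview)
The paper does not actually prove this lemma: it appears in the appendix as a recalled fact, with the prefatory remark that the section collects properties ``which will be used in this paper'' and that ``for more details, one can see \cite{Danchin01,HHL} and references therein.'' There is therefore no in-paper proof to compare against.

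Your paraproduct sketch is the standard route taken in the cited references (Danchin's \cite{Danchin01} and Bahouri--Chemin--Danchin \cite{BCDbook}), and the outline is correct: Bony decomposition, spectral localization of each piece, Bernstein to exchange integrability against frequency, and discrete convolution to close the $\ell^1$ sums. The identification of where each hypothesis enters --- $s_1,s_2>0$ for the first law, $s>-d/2$ from the remainder in the second, $\min(s_1+s_2,t_1+t_2)>0$ from the remainder in the third --- is accurate, and your warning about the hybrid-weight bookkeeping at the crossover $k=0$ is exactly the point that distinguishes the hybrid case from the purely homogeneous one. One small remark: the paper's statement of the third assertion contains an apparent typo (it writes $fg\in B^{s_1+s_2-1,\,t_1+t_2-1}$ but then states the estimate in $B^{s_1+s_2-d/2,\,t_1+t_2-d/2}$); your proof correctly targets the $-d/2$ version, which is what the norm inequality actually asserts and what the references prove.
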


For $\alpha,\beta\in\R$, let us define the following characteristic function on $\Z$:
\begin{align*}
     \tilde{\varphi}^{\alpha,\beta}(k)=\left\{
     \begin{array}{ll}
        \alpha,\quad &\text{if } k\ls 0,\\
        \beta, & \text{if } k\gs 1.
     \end{array}
     \right.
\end{align*}
Then, we can recall the following lemma.
\begin{lemma}\label{lem.innner}
    Let $F$ be an homogeneous smooth function of degree $m$. Suppose that $-d/2<s_1,t_1,s_2,t_2\ls 1+d/2$. The following two estimates hold:
    \begin{align*}
    &\abs{(F(D)\dk(\vv\cdot \nabla a),F(D)\dk a)}\\
    &\qquad\qquad\lesssim \gamma_k2^{-k(\tilde{\varphi}^{s_1,s_2}(k)-m)}\norm{\vv}_{\be[\hn+1]} \norm{a}_{\hbe[s_1]{s_2}}\norm{F(D)\dk a}_{2},\\
    &\abs{(F(D)\dk(\vv\cdot\nabla a),\dk b)+(\dk(\vv\cdot\nabla b),F(D)\dk a)}\\
    &\qquad\qquad\lesssim \gamma_k\norm{\vv}_{\be[\hn+1]}\times\big( 2^{-k\tilde{\varphi}^{t_1,t_2}(k)}\norm{F(D)\dk a}_{2}\norm{b}_{\hbe[t_1]{t_2}}\\
    &\qquad\qquad\qquad+2^{-k(\tilde{\varphi}^{s_1,s_2}(k)-m)}\norm{a}_{\hbe[s_1]{s_2}}
    \norm{\dk b}_{2}\big),
    \end{align*}
where $(\cdot,\cdot)$ denotes the $2$-inner product,  $\sum_{k\in\Z} \gamma_k\ls 1$ and the operator $F(D)$ is defined by $F(D)f:=\F^{-1} F(\xi)\F f$.
\end{lemma}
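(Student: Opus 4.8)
The plan is to isolate in each of the two bilinear expressions a \emph{transport part}, handled by the antisymmetry of the convection operator, and a \emph{commutator part}, handled by Bony's paraproduct decomposition together with a kernel estimate for the operator $F(D)\dk$. Concretely I would write
\begin{align*}
  F(D)\dk(\vv\cdot\nabla a)=\vv\cdot\nabla\big(F(D)\dk[a]\big)+R_k,\qquad R_k:=[F(D)\dk,\vv\cdot\nabla]a,
\end{align*}
and likewise $\dk(\vv\cdot\nabla b)=\vv\cdot\nabla\dk[b]+[\dk,\vv\cdot\nabla]b$. Since $(\vv\cdot\nabla w,w)=-\tfrac12\int(\dive\vv)\,w^2$, the transport contribution to the first expression is bounded by $\tfrac12\norm{\dive\vv}_\infty\norm{F(D)\dk[a]}_2^2$, and that to the second expression equals $-\int(\dive\vv)\,(F(D)\dk[a])(\dk[b])$, bounded by $\norm{\dive\vv}_\infty\norm{F(D)\dk[a]}_2\norm{\dk[b]}_2$; here I use the embedding $\be[\hn]\hookrightarrow L^\infty$ to get $\norm{\dive\vv}_\infty\lesssim\norm{\vv}_{\be[\hn+1]}$.

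To turn these into the claimed right-hand sides, I would use that $F$ is homogeneous of degree $m$ and $\dk$ localizes to frequencies $\simeq2^k$, so that $\norm{F(D)\dk[a]}_2\lesssim2^{km}\norm{\dk[a]}_2$; introducing $\gamma_k:=2^{k(\tilde\varphi^{s_1,s_2}(k)-m)}\norm{F(D)\dk[a]}_2/\norm{a}_{\hbe[s_1]{s_2}}$ one has $\sum_k\gamma_k\lesssim1$ and $\norm{F(D)\dk[a]}_2\lesssim\gamma_k2^{-k(\tilde\varphi^{s_1,s_2}(k)-m)}\norm{a}_{\hbe[s_1]{s_2}}$, and similarly $\norm{\dk[b]}_2=c_k2^{-k\tilde\varphi^{t_1,t_2}(k)}\norm{b}_{\hbe[t_1]{t_2}}$ with $\sum_k c_k\ls1$. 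Substituting one of these identities into one factor of each transport bound gives precisely a term of the required form (for the second expression, $\norm{F(D)\dk[a]}_2\norm{\dk[b]}_2$ becomes the first of the two permitted shapes). It then only remains to establish the commutator estimates
\begin{align*}
  \norm{[F(D)\dk,\vv\cdot\nabla]a}_2&\lesssim\gamma_k2^{-k(\tilde\varphi^{s_1,s_2}(k)-m)}\norm{\vv}_{\be[\hn+1]}\norm{a}_{\hbe[s_1]{s_2}},\\
  \norm{[\dk,\vv\cdot\nabla]b}_2&\lesssim\gamma_k2^{-k\tilde\varphi^{t_1,t_2}(k)}\norm{\vv}_{\be[\hn+1]}\norm{b}_{\hbe[t_1]{t_2}},
\end{align*}
the second being the special case $F\equiv1$, $m=0$ of the first.

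For the commutator I would Bony-decompose $\vv\cdot\nabla a=T_{\vv^j}\partial_j a+T_{\partial_j a}\vv^j+R(\vv^j,\partial_j a)$. In the paraproduct $T_{\vv^j}\partial_j a=\sum_{k'}\sk[k'-1]\vv^j\,\partial_j\triangle_{k'}a$ only the blocks with $|k'-k|\ls N$ survive $F(D)\dk$, and there a genuine commutator $[F(D)\dk,\sk[k'-1]\vv^j]$ appears. Writing its convolution kernel as $g_k$, the homogeneity of $F$ gives $g_k(x)=2^{k(d+m)}h(2^kx)$ with $h:=\F^{-1}(F\varphi)\in\mathscr{S}$, hence $\norm{\,|\cdot|\,g_k}_{L^1}\lesssim2^{k(m-1)}$; combined with $([F(D)\dk,\phi]\psi)(x)=\int g_k(x-y)(\phi(y)-\phi(x))\psi(y)\,dy$ and the mean value inequality this yields $\norm{[F(D)\dk,\sk[k'-1]\vv^j]\psi}_2\lesssim2^{k(m-1)}\norm{\nabla\sk[k'-1]\vv}_\infty\norm{\psi}_2$, and since $\norm{\nabla\sk[k'-1]\vv}_\infty\lesssim\norm{\vv}_{\be[\hn+1]}$ and $\norm{\partial_j\triangle_{k'}a}_2\simeq2^k\norm{\triangle_{k'}a}_2$, summation over $k'$ produces the desired bound. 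The two remaining pieces $T_{\partial_j a}\vv^j$ and $R(\vv^j,\partial_j a)$ need no commutator structure: using $\norm{\triangle_{k'}\vv}_\infty\lesssim2^{-k'}\norm{\vv}_{\be[\hn+1]}$ they obey the product/remainder estimates in hybrid Besov spaces of the type collected in \lemref{lem.fgbesov}, and the restriction $-\hn<s_1,s_2\ls\hn+1$ is exactly what makes the remainder sum converge (lower bound) and the gain of one derivative in $T_{\partial_j a}\vv^j$ legitimate (upper bound, since then $\partial_j a$ lies in a Besov space of order $s_i-1$ embedding into an $L^\infty$-based Besov space of nonpositive order). Collecting the three contributions and relabelling the summable sequences gives the commutator estimates, hence the lemma.

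I expect the main difficulty to be bookkeeping rather than any single deep estimate: one must propagate the $\ell^1$ summability of $(\gamma_k)$ through every paraproduct/remainder sum (these are discrete convolutions of $\ell^1$ sequences with geometric ones, which stay $\ell^1$), and handle the jump of $\tilde\varphi^{s_1,s_2}$ at $k=0$ — in the transition zone $k'\simeq0$ the exponents attached to $\triangle_{k'}a$ and $\dk[a]$ differ by a bounded amount and are absorbed into the implicit constant. The only genuinely analytic ingredient is the uniform kernel bound $\norm{\,|\cdot|\,g_k}_{L^1}\lesssim2^{k(m-1)}$, which follows from the smoothness and homogeneity of $F$ away from the origin after the natural rescaling $\xi\mapsto2^k\xi$.
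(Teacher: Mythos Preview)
The paper does not actually supply a proof of this lemma: it appears in the appendix among results that are explicitly \emph{recalled} from \cite{Danchin01,HHL} and \cite{BCDbook}, with no argument given. So there is no in-paper proof to compare against.

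Your proposal is the standard argument one finds in those references (see in particular \cite[Lemma~2.100 and its proof]{BCDbook} and the commutator lemmas in \cite{Danchin01}). The decomposition into a transport piece handled by the antisymmetry identity $(\vv\cdot\nabla w,w)=-\tfrac12\int(\dive\vv)\,w^2$ and a commutator piece handled via Bony's paraproduct decomposition, together with the first-moment kernel bound $\norm{\,\abs{\cdot}\,g_k}_{L^1}\lesssim 2^{k(m-1)}$ coming from the homogeneity of $F$ and the rescaling $\xi\mapsto 2^k\xi$, is exactly the route taken there. Your identification of the range $-\hn<s_i\ls \hn+1$ with convergence of the remainder sum (lower bound) and control of $T_{\partial_j a}\vv^j$ (upper bound) is correct, as is your remark that the jump of $\tilde\varphi^{s_1,s_2}$ at $k=0$ only affects finitely many indices and is absorbed into constants. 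The sketch is sound; what remains is indeed bookkeeping.
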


In the context of this paper, we also need to use the interpolation spaces of hybrid Besov spaces together with a time space such as $L^p([0,T);\hbe[s]{t})$. Thus, we have to introduce the Chemin-Lerner type space (cf. \cite{CheLer}) which is a refinement of the space $L^p([0,T);\hbe[s]{t})$.

\begin{definition}
Let $p\in [1,\infty]$, $T\in(0,\infty]$ and $s_1,\, s_2\in\R$. Then we define
\begin{align*}
    \norm{f}_{\LL[p]([0,T);\hbe[s]{t})}=\sum_{k\ls 0}2^{ks}\norm{\dk f}_{L^p([0,T); L^2)}+\sum_{k> 0}2^{kt}\norm{\dk f}_{L^p([0,T); L^2)}.
\end{align*}
\end{definition}

Noting that Minkowski's inequality yields $\norm{f}_{L^p([0,T);\hbe[s]{t})}\ls \norm{f}_{\LL[p]([0,T);\hbe[s]{t})}$, we define spaces $\LL[p]([0,T);\hbe[s]{t})$ as follows
\begin{align*}
    \LL[p]([0,T);\hbe[s]{t})=\set{f\in L^p([0,T);\hbe[s]{t}): \norm{f}_{\LL[p]([0,T);\hbe[s]{t})}<\infty}.
\end{align*}
If $T=\infty$, then we omit the subscript $T$ from the notation $\LL[p]([0,T);\hbe[s]{t})$, that is, $\LL[p](\hbe[s]{t})$ for simplicity. We will denote by $\tilde{\mathcal{C}}([0,T);\hbe[s]{t})$ the subset of functions of $\LL([0,T);\hbe[s]{t})$ which are continuous on $[0,T)$ with values in $\hbe[s]{t}$.

Let us observe that $L^1([0,T);\hbe[s]{t})=\LL[1]([0,T);\hbe[s]{t})$, but the embedding $$\LL[p]([0,T);\hbe[s]{t})\subset L^p([0,T);\hbe[s]{t})$$
is strict if $p>1$.

We will use the following interpolation property which can be verified easily (cf. \cite{BCDbook,Ber76}).

\begin{lemma}\label{lem.inter}
Let $s,t,s_1,t_1, s_2,t_2\in\R$ and $p,p_1,p_2\in[1,\infty]$. We have
\begin{align*}
    \norm{f}_{\LL[p]([0,T);\hbe[s]{t})}\ls \norm{f}_{\LL[{{p_1}}]([0,T);\hbe[s_1]{t_1})}^\theta \norm{f}_{\LL[{{p_2}}]([0,T);\hbe[s_2]{t_2})}^{1-\theta},
\end{align*}
where $\frac{1}{p}=\frac{\theta}{p_1}+\frac{1-\theta}{p_2}$, $s=\theta s_1+(1-\theta)s_2$ and $t=\theta t_1+(1-\theta)t_2$.
\end{lemma}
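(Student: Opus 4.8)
The plan is to obtain the interpolation inequality from two successive applications of Hölder's inequality — one in the time variable and one in the frequency sum — so that nothing beyond convexity of $t\mapsto t^{1/\theta}$ is needed. The argument is insensitive to whether $T$ is finite, so I would work on $[0,T)$ throughout, and since the claim is trivial otherwise I may assume both norms on the right are finite. As a preliminary bookkeeping step I would rewrite all three Chemin-Lerner norms in the single form
\[
  \norm{f}_{\LL[p]([0,T);\hbe[s]{t})}=\sum_{k\in\Z}2^{k\tilde{\varphi}^{s,t}(k)}\,\norm{\dk f}_{L^p([0,T);L^2)},
\]
with $\tilde{\varphi}^{s,t}$ the low/high-frequency exponent function from the appendix, and record the elementary identity $\tilde{\varphi}^{s,t}(k)=\theta\,\tilde{\varphi}^{s_1,t_1}(k)+(1-\theta)\,\tilde{\varphi}^{s_2,t_2}(k)$ valid for every $k\in\Z$, which follows at once from $s=\theta s_1+(1-\theta)s_2$ and $t=\theta t_1+(1-\theta)t_2$ (keeping this linearity at both ends of the frequency range is exactly why one carries the two indices $s,t$).

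Next, for each fixed $k$ I would interpolate in time. Applying Hölder's inequality to the scalar function $\tau\mapsto\norm{\dk f(\tau)}_{L^2}$, written as the product of its $\theta$-th and $(1-\theta)$-th powers and using the conjugate exponents dictated by $\tfrac1p=\tfrac{\theta}{p_1}+\tfrac{1-\theta}{p_2}$, gives
\[
  \norm{\dk f}_{L^p([0,T);L^2)}\ls\norm{\dk f}_{L^{p_1}([0,T);L^2)}^{\theta}\,\norm{\dk f}_{L^{p_2}([0,T);L^2)}^{1-\theta}.
\]
Multiplying through by $2^{k\tilde{\varphi}^{s,t}(k)}$ and distributing the weight via the linearity identity above turns this into a pointwise-in-$k$ bound $c_k\ls a_k^{\theta}b_k^{1-\theta}$, where $a_k$ and $b_k$ are the $k$-th summands of the two right-hand norms and $c_k$ that of the left-hand norm.

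The last step is to sum over $k\in\Z$ and apply Hölder's inequality for series with conjugate exponents $\tfrac1\theta$ and $\tfrac1{1-\theta}$, namely $\sum_k a_k^{\theta}b_k^{1-\theta}\ls\big(\sum_k a_k\big)^{\theta}\big(\sum_k b_k\big)^{1-\theta}$ for nonnegative sequences, which produces exactly
\[
  \norm{f}_{\LL[p]([0,T);\hbe[s]{t})}\ls\norm{f}_{\LL[{{p_1}}]([0,T);\hbe[s_1]{t_1})}^{\theta}\,\norm{f}_{\LL[{{p_2}}]([0,T);\hbe[s_2]{t_2})}^{1-\theta}.
\]
I do not anticipate a genuine obstacle here: the only points needing attention are the standard conventions at the endpoints ($p_i=\infty$, read with $1/\infty=0$) and the degenerate values $\theta\in\{0,1\}$, all immediate; the one thing I would take care to state cleanly is the frequency-wise linearity of $\tilde{\varphi}^{s,t}$ in $(s,t)$, since that is the sole ingredient specific to hybrid Besov spaces, the rest being a mechanical double use of Hölder.
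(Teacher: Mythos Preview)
Your argument is correct and is precisely the standard double-H\"older verification the paper has in mind: the paper does not give a proof of this lemma at all, merely remarking that it ``can be verified easily (cf.~\cite{BCDbook,Ber76}).'' Your two-step use of H\"older (first in $t$, then in $k$) together with the linearity $\tilde{\varphi}^{s,t}=\theta\tilde{\varphi}^{s_1,t_1}+(1-\theta)\tilde{\varphi}^{s_2,t_2}$ is exactly that easy verification.
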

\end{appendix}


\section*{Acknowledgments}
The authors would like to thank the referees for their valuable comments and Dr. L. Yao for helpful comments on the original version of the manuscript. Hao's work was partially supported by the National Natural Science Foundation of China (grant 11171327), and the Youth Innovation Promotion Association, Chinese Academy of Sciences. Li's work was partially supported by NSFC grant 11171228 and 11011130029, and the AHRDIHL Project of Beijing Municipality (No. PHR 201006107).

\end{document}